\setlist[enumerate]{label=\textup{(\roman{*})}}
\theoremstyle{plain}
\newtheorem{theorem}{Theorem}[section]
\newtheorem{proposition}[theorem]{Proposition}
\newtheorem{corollary}[theorem]{Corollary}
\newtheorem{lemma}[theorem]{Lemma}
\theoremstyle{remark}
\newtheorem{example}[theorem]{Example}
\newtheorem{definition}[theorem]{Definition}
\newtheorem{remark}[theorem]{Remark}
\newcommand{\T}{\mathbb{T}}
\newcommand{\R}{\mathbb{R}}
\newcommand{\Z}{\mathbb{Z}}
\newcommand{\N}{\mathbb{N}}
\newcommand{\Q}{\mathbb{Q}}
\newcommand{\A}{\mathcal{A}}
\newcommand{\M}{\mathcal{M}}
\newcommand{\cL}{\mathcal{L}}
\newcommand{\DD}{\mathcal{D}}
\newcommand{\RR}{\mathcal{R}}
\newcommand*{\skewprod}{\rtimes}
\newcommand*{\from}{\colon}
\newcommand*{\ab}{\operatorname{Ab}}
\newcommand*{\card}{\operatorname{Card}}
\newcommand*{\aut}{\operatorname{Aut}}
\renewcommand*{\hom}{\operatorname{Hom}}
\newcommand*{\img}{\operatorname{im}}
\newcommand*{\GL}{\operatorname{GL}}
\newcommand*{\lext}{\operatorname{L}}
\newcommand*{\rext}{\operatorname{R}}
\newcommand*{\ext}{\operatorname{E}}
\newcommand*{\comp}{\mathrm{c}}
\newcommand*{\lex}{\mathrm{lex}}
\renewcommand*{\bar}{\overline}
\numberwithin{equation}{section}
\title{Density of group languages in shift spaces}
\author[V. Berthé]{Valérie Berthé}
    \address{IRIF, Université Paris Cité, Paris, France}
    \email{berthe@irif.fr}
\author[H. Goulet-Ouellet]{Herman Goulet-Ouellet}
    \address{Département de mathématiques et de statistique, Université de Moncton, Moncton, Canada}
    \email{herman.goulet-ouellet@umoncton.ca}
\author[C.-F. Nyberg Brodda]{Carl-Fredrik Nyberg Brodda} 
    \address{School of Mathematics, Korea Institute for Advanced Study, Republic of Korea} 
    \email{cfnb@kias.re.kr}
\author[D. Perrin]{Dominique Perrin}
    \address{Laboratoire d’Informatique Gaspard Monge, Université Paris Est, Paris, France}
    \email{dominique.perrin@esiee.fr}
\author[K. Petersen]{Karl Petersen}
    \address{Mathematics department, University of North Carolina, USA}
    \email{petersen@math.unc.edu}
    \subjclass{Primary 37B10; Secondary 37A20, 68Q45}
    \keywords{Symbolic dynamics, Ergodic theory, Automata theory}
\date{June 2026}
\begin{document}

\begin{abstract}
  The density of a rational language can be understood as the frequency of some pattern in the shift space, for example a pattern like ``words with an even number of a given letter.''   We study the density of group languages, i.e. rational languages recognized by morphisms onto finite groups, inside shift spaces. We show that the density with respect to any given ergodic measure on a shift space exists for every group language, because it can be computed by using any ergodic lift of the given measure to a skew product between the shift space and the recognizing group. We then further study densities in shifts of finite type (with a suitable notion of irreducibility), and then in minimal shifts. In the latter case, we obtain a closed formula for the density under the condition that the aforementioned skew product has minimal closed invariant subsets which are ergodic under the product of the original measure and the uniform probability measure on the group. The formula is derived in part from a characterization of minimal closed invariant subsets for skew products between shifts and finite groups relying on notions of cocycles and coboundaries. In the case where the whole skew product is ergodic under the product measure, then the density is just the cardinality of the subset of the group which defines the language divided by the cardinality of the group. Moreover, we provide sufficient conditions for the skew product to have minimal closed invariant subsets that are ergodic under the product measure. Finally, we investigate the link between minimal closed invariant subsets, return words and bifix codes.
\end{abstract}

\maketitle

\markboth{\MakeUppercase{V. Berthé et al.}}{\MakeUppercase{Density of group languages in shift spaces}}

\tableofcontents

\section{Introduction}

The study of language density can be traced back to the work of Schützenberger in the 60s~\cite{Schutzenberger1965}, Berstel in the 70s~\cite{Berstel1972}, and Hansel and Perrin in the 80s~\cite{Hansel1983}. The idea also appears in Eilenberg's monograph~\cite{Eilenberg1974} and in the monograph by Berstel, Perrin and Reutenauer~\cite{BerstelPerrinReutenauer2009}. These earlier works, motivated mainly by automata theory and the theory of codes, focused on density with respect to \emph{Bernoulli measures} (see Section~\ref{sec:Bernoulli}).

In the present paper, we draw our motivation from symbolic dynamics and turn to \emph{ergodic measures on shift spaces}. Within this setting, the density of a given language can be understood as the frequency (with respect to some given ergodic measure) of some pattern in the shift space -- for example a pattern like \enquote{words with an even number of a given letter.} More precisely,  given  a shift $X$  on the alphabet $A$ endowed with a shift-invariant probability measure $\mu$,  and $L \subseteq A^*$  a language, the \emph{density $\delta_\mu(L)$ of $L$ under the measure $\mu$} is defined as the Ces\`{a}ro limit of $\mu(\{ u\in L : |u|=n\})$ as $n\to\infty$, whenever it exists.
  
We restrict our attention to \emph{group languages}, i.e.~languages recognized by morphisms onto finite groups: by fixing a morphism $\varphi\from A^*\to G$ onto a finite group $G$,  we consider a  language of the form  $L=\varphi^{-1}(K)$, $K\subseteq G$.
The density of the languages of this form can be understood using a \emph{skew product} (sometimes called a \emph{group extension}), which, briefly put, is a dynamical system built from a group $G$, together with  a dynamical system acting on $G$.  In our case, we  consider a first  dynamical system, a shift $X$ over $A$  which supports a measure $\mu$, 
which is then enriched through an action on a finite  group $G$ defined via a morphism $\varphi$. This yields a further dynamical system, the skew product, which is governed both by the dynamics of the original system $X$, and by the algebraic structure of the group $G$.  
 A precise definition of the skew products used in this paper may be found at the beginning of Section~\ref{s:skew}.
The key idea in this paper is that the density can be expressed in terms of limits of \emph{ergodic sums in a skew product} (cf.~Theorem~\ref{t:first-main}). This allows us to show that the density always exists and, under suitable conditions, to calculate it. This approach is strongly related with foundational work by Furstenberg, Veech, Schmidt, and Zimmer (among others) on ergodic properties of skew products~\cite{Furstenberg1961,Veech1969,Veech1975,Schmidt1977,Zimmer1976}. Veech's work in particular is concerned with a special case of the very same notion of density studied here, and it played a key role in guiding our investigation.

Given a shift space $X$ with an ergodic measure $\mu$, one of our main results relates the density $\delta_\mu(L)$ with ergodic measures on the skew product between $X$ and $G$, denoted $G\skewprod _{\varphi}X$, where the skewing function is the \emph{cocycle} determined by $\varphi$ (a skewing function which depends only on the symbol at the zero coordinate of elements of the shift). We say that a measure $\bar\mu$ on the skew product \emph{projects to} $\mu$ if $\bar\mu(G\times B) = \mu(B)$ for every measurable $B\subseteq X$. 

\begin{restatable}{theorem}{firstmain}\label{t:first-main}
    Let $X$ be a shift space on a finite alphabet $A$ with  a shift-invariant probability measure $\mu$ and let $\varphi\from A^*\to G$ be a morphism onto a finite group $G$. For every group language $L=\varphi^{-1}(K)$, where $K\subseteq G$, the density $\delta_\mu(L)$ exists.
    Moreover, if $\mu$ is ergodic, the density is given by 
    \begin{equation*}
        \delta_\mu(L)= \sum_{g \in G} \bar\mu(\{g\}\times X) \, \bar\mu(gK\times X)
    \end{equation*} 
    where $\bar\mu$ is any ergodic measure on the skew product $G \skewprod_{\varphi} X$ that projects to $\mu$.
\end{restatable}

The result is proved in Section~\ref{s:skew}. The proof uses the known fact that the skew product always admits an ergodic measure $\bar\mu$ that projects to $\mu$ (see Lemma~\ref{l:existence}). A natural candidate for the measure $\bar\mu$ is the product measure $\nu\times\mu$, where $\nu$ is the uniform probability measure on $G$. When $\nu\times\mu$ happens to be ergodic, then the above formula takes on the following very simple form (see Corollary~\ref{c:equidistribution}):
\begin{equation*}
   \delta_\mu(L)= \sum_{g \in G} (\nu\times\mu)(\{g\}\times X) \, (\nu\times\mu)(gK\times X)=\sum_{g\in G}(1/|G|)(|gK|/|G|) = |K|/|G|.
\end{equation*}
However this is not always the case: see for instance Examples~\ref{eg:periodicskewprod-2}, \ref{eg:not-si}, \ref{eg:thue-morse-2} and \ref{eg:unimodular}.
We also introduce in Section \ref{subsec:pointwise} a pointwise version of density (Definition \ref{def:pointwisedensity}). This leads to a further proof of the existence of the density of group languages, expressed as an integral over pointwise densities.

In the rest of the paper we apply Theorem~\ref{t:first-main} to two cases: when $\mu$ is a Markov measure on a shift of finite type, and when $\mu$ is an ergodic measure on a minimal shift space. In the former case, we present a characterization of ergodicity of the product measure $\nu\times\mu$ when $\mu$ is an \emph{$r$-step Markov} measure, $r\geq 1$, meaning that 
\[
\mu(x_m = a \mid x_{[0,m)} = w) = \mu(x_m=a \mid x_{[m-r,m)} = w_{[m-r,m)})
\] 
for  all words $w$ of length $m\geq r$ such that $\mu(x_m = a \mid x_{[0,m)} = w)\neq 0$. 
We define for this purpose a suitable notion of irreducibility, called \emph{$\varphi$-irreducibility} (Definition~\ref{d:phi-irreducible}), which leads to the following theorem.

\begin{restatable}{theorem}{secondmain}\label{t:second-main}   
    Let $\mu$ be an $r$-step Markov measure on $A^\Z$, $r\geq 1$, and let $X$ be the support of $\mu$.
    Let $\varphi\from A^*\to G$ be a morphism onto a finite group with uniform probability measure $\nu$. 
    If $X$ is $\varphi$-irreducible, then the product measure $\nu\times\mu$ is ergodic on the skew product $G\skewprod _{\varphi}X$. 
    
    In particular, for every $K\subseteq G$, the language $L = \varphi^{-1}(K)$ has density $\delta_\mu(L) = |K|/|G|$.
\end{restatable}

When $\mu$ is a Bernoulli measure, the above result also follows from the work of Hansel and Perrin~\cite[Theorem~3]{Hansel1983}, which uses a completely different approach based on the theory of codes. We provide a survey of this approach in Section~\ref{sec:Bernoulli}. 

In the case of minimal shift spaces, we specialize Theorem~\ref{t:first-main} into a formula which holds under the condition that $\nu\times\mu$ is ergodic on the minimal closed invariant subsets of $G\skewprod _{\varphi}X$. This generalizes the simple formula found when the product measure $\nu\times\mu$ is ergodic.  

Let us briefly present the notion, inspired by~\cite[Proposition~2.1]{LemanczykMentezen2002}, which lies at the core of this more general formula. Given a subgroup $H\leq G$, a map $\alpha$ from $X$ to the right coset space $H\backslash G$ is called a \emph{cobounding map} mod $H$ if it satisfies the following coboundary type equation, where $S$ denotes the shift map: 
\begin{equation*}
    \alpha(Sx) = \alpha(x)\varphi(x_0).
\end{equation*}

The term \emph{cobounding} appears for instance in a paper by Baggett et al.~\cite{Baggett1998}; the terms \emph{transfer function} and \emph{intertwining} have also been used with similar meanings in~\cite{Baggett1998,Baggett1986,Ramsay1976}.
As the name suggests, this notion is related with \emph{cocycles} and \emph{coboundaries} and more broadly to the long history of cohomology in ergodic theory. Besides~\cite{LemanczykMentezen2002} and the previously mentioned work of Furstenberg, Veech, Schmidt and Zimmer, we drew inspiration also from~\cite{Anzai1951,Conze1976}. The following theorem is our third main result; its statement uses a natural partial order on cobounding maps which is introduced in Section~\ref{ss:charac}.

\begin{restatable}{theorem}{thirdmain}\label{t:third-main}
    Let $X$ be a minimal shift space on $A$ with an ergodic measure $\mu$ and $\varphi\colon A^*\to G$ a morphism onto a finite group $G$ with uniform probability measure $\nu$. Suppose that $\nu\times\mu$ is ergodic on each of the minimal closed invariant subsets of $G\skewprod_\varphi X$. Then for every group language $L = \varphi^{-1}(K)$, where $K\subseteq G$, the density $\delta_\mu(L)$ is given by the following formula, where $\alpha\from X\to H\backslash G$ is any minimal cobounding map:
    \begin{equation*}
        \delta_\mu(L) = \frac{1}{|H|}\sum_{k\in K}\sum_{Hg\in H\backslash G}\mu(\alpha^{-1}(Hg))\mu(\alpha^{-1}(Hgk)).
    \end{equation*}
\end{restatable}

Behind this last result is a bijection between the minimal closed invariant subsets of $G\skewprod_{\varphi} X$ and the cobounding maps which are minimal under the aforementioned partial order (Proposition~\ref{p:minimal-cobounding}). This entails, among other things, that whenever $X$ is minimal, $\nu\times\mu$ can be ergodic only when $G\skewprod _{\varphi} X$ itself is minimal (Corollary~\ref{cor:minerg}). 
The formula in Theorem~\ref{t:third-main} is derived from Theorem~\ref{t:first-main}, where the role of the ergodic measure $\bar\mu$ is played by an appropriate rescaling of the measure $\nu\times\mu$, with $\nu$ being the uniform probability measure on $G$. This rescaling explains the factor $1/|H|$ in the formula.

Theorem~\ref{t:third-main} motivates further study of the structure of the skew products between minimal shifts and finite groups, which includes a characterization of minimality of such skew products in terms of \emph{return words} (Theorem~\ref{t:minimalskew}). We first give a combinatorial proof of this characterization using the theory of bifix codes, inspired by ideas from earlier works~\cite{BerstelDeFelicePerrinReutenauerRindone2012,BertheDeFeliceDolceLeroyPerrinReutenauerRindone2015c}. Later on, we show that the same conclusion can be reached using cobounding maps (Proposition~\ref{p:cobounding-return}).

Having in mind Theorem~\ref{t:third-main}, we also provide sufficient conditions for ergodicity of $\nu\times\mu$ on minimal closed invariant subsets of $G\skewprod_{\varphi}  X$. A first condition is given in Corollary~\ref{c:mod1} combined with Proposition~\ref{prop:minimal-ergodic}, and a second one, restricted to the case of shift spaces generated by \emph{primitive substitutions}, is given in Proposition~\ref{p:invertible-ergodic}. We deduce as a corollary of the second condition that \emph{substitutive dendric shifts} (definitions are recalled in Section~\ref{s:morphic}) have ergodic skew products with all finite groups, and consequently the density of $L= \varphi^{-1}(K)$ is simply $|K|/|G|$, $K\subseteq G$ (see Corollary~\ref{c:equidistribution}).

Here is an example of what these results tell us about specific systems. Consider the Fibonacci shift on the two-letter alphabet $\{a,b\}$, whose definition is recalled in Section~\ref{ss:fibo}. For each word $w$ in the language of the shift space, denote by $|w|_a$ the number of occurrences of $a$ in $w$. Then, we can apply Theorem~\ref{t:first-main} to deduce that the average probability over sufficiently long words $w$ that $|w|_a$ is congruent to $r$ mod $m$ is approximately $1/m$, for each $r=0,\ldots, m-1$. However the convergence of these probabilities holds only in the sense of Ces\`{a}ro mean. This specific example with $m=2$ is developed in detail in Section~\ref{ss:fibo}.

There have been previous results of a similar nature concerning equidistribution modulo~$m$. For instance, Veech~\cite{Veech1969,Veech1975} studied the parity of the number of visits to an interval by the orbit of a point under an irrational rotation (see Example \ref{ex:counting1}), and  Jager and Liardet~\cite{JL:88} studied the congruence classes of the matrices in $\GL(2,\Z)$ associated with continued fraction expansions of real numbers {see Example \ref{ex:convergents}}. We also revisit these results in Section~\ref{s:examples} in the particular case of Sturmian shifts. In each case, it is ergodicity of a relevant skew product that implies equidistribution among cosets. Our approach provides still more examples. These examples include the Thue--Morse shift (explored in Examples~\ref{eg:thue-morse-1} and \ref{eg:thue-morse-2}) and the case of substitutive dendric shifts (presented in Section~\ref{ss:dendric}), which includes  substitutive  Sturmian shifts and substitutive codings of interval exchanges.

Let us give a brief overview of the paper's structure. Section~\ref{s:symbolic-dynamics} gives some preliminaries on symbolic dynamics.  Section~\ref{s:skew} recalls the definition of skew products and gives an elementary proof of our first main result, Theorem~\ref{t:first-main}, which shows in particular that the density always exists. 
We also provide an alternative proof of the existence using a pointwise version of density.
We then survey the original approach to density under Bernoulli measures in Section~\ref{sec:Bernoulli}, relying on notions   from  algebraic theory of formal languages. The study of the density for shifts of finite type is  then handled in Theorem~\ref{t:second-main} together with a discussion on various notions of irreducibility. Section~\ref{s:bifix} contains the material on bifix codes and a characterization of minimal skew products in terms of return words.  
Our third  main result, Theorem~\ref{t:third-main}, is presented in Section~\ref{s:cobounding}, where cobounding maps are studied. The section also contains a simple sufficient condition for ergodicity of the product measure $\nu\times\mu$ on minimal closed invariant subsets. In Section~\ref{s:morphic} we take a closer look at shifts generated by primitive substitutions, and we consider particular examples of skew products based  on Sturmian shifts  in Section~\ref{s:examples}.

\section{Symbolic dynamics}\label{s:symbolic-dynamics}

Let $A$ be a finite alphabet. Let $\varepsilon$ stand for the empty word of the free monoid ${A}^*$ and $A^+ = A^*\setminus\{\varepsilon\}$. We denote by $A^\Z$ the set of two-sided infinite words on $A$. For any word~$w$ in the free monoid $A^*$ (endowed with concatenation), $|w|$ denotes the length of~$w$ and $|w|_a$ stands for the number of occurrences of the letter~$a$ in the word~$w$. We start indexing finite words with 0, so that a word $w\in A^*$ has the form $w = w_0w_1\ldots w_{n-1}$, where $n=|w|$. Given $0\leq i\leq j< n$, we let
\begin{equation*}
    w_{[i,j)} = w_iw_{i+1}\ldots w_j,
\end{equation*}
and we extend a similar notation for infinite words. 

A~\emph{factor} of a (finite or infinite) word $w$ is defined as a finite concatenation of consecutive letters occurring in~$w$, i.e. a word $u$ is a factor of $w$ if there exist indices $i\leq j$ such that $u = w_{[i,j)}$. If $w$ is a finite word, then $u$ is a factor of $w$ precisely when there are words $p$ and $s$ such that $w = pus$. When $p = \varepsilon$ (resp., $s = \varepsilon$), we say that $u$ is a \emph{prefix} (resp., \emph{suffix}) of $w$. 

An infinite word $x = (x_n)_{n\in\Z}$ is \emph{uniformly recurrent} if every word occurring in $x$ occurs in an infinite number of positions with bounded gaps; in other words, for every factor $w$ of $x$, there exists a positive integer $m$ such that for every $n$, $w$ is a factor of $x_{[n,n+m)}$. 

We view closed shift-invariant sets of two-sided infinite words as dynamical systems under the map $S$, called the \emph{shift map}, defined by 
\begin{equation*}
    S\left( (x_n)_{n\in\Z} \right) = (x_{n+1})_{n\in\Z}. 
\end{equation*}

A \emph{shift space} (also shortened to \emph{shift}) is a pair $(X,S)$ where $X$ is a  topologically closed and shift-invariant subset of $A^{\Z}$ for some finite alphabet $A$. We usually shorten $(X,S)$ as $X$ when we refer to the system $(X,S)$. The \emph{language} of $X$ is defined as the set $\cL(X)$ of factors of elements of $X$
\begin{equation*}
    \cL(X) = \{ x_{[i,j)} \mid x\in X,\ i, j\in\Z,\ i\leq j \}.
\end{equation*}
When a shift $X$ is said to be defined on the alphabet $A$, we assume that $A\subseteq\cL(X)$. 

A shift space is said to be \emph{minimal} if it admits no proper non-empty closed and shift-invariant subset; equivalently the $S$-orbit of every element of $X$ is dense. Note that a shift space $X$ is minimal if and only if every infinite word $x \in X$ is uniformly recurrent. On the other hand, a shift space is called \emph{irreducible} if \emph{there exists} an element $x\in X$ with dense $S$-orbit. This is equivalent to the following property of $\cL(X)$: for every $u,v\in\cL(X)$, there exists $w\in A^*$ such that $uwv\in \cL(X)$. 

Let $X$ be a shift space on $A$ and fix $w\in\cL(X)$. We denote by $\RR_X(w)$ the set of (right) \emph{return words} to $w$. It is, by definition, the set of words $r$ such that $rw$ is in $\cL(X)$ and has exactly two factors equal to $w$, one as a prefix and the other one as a suffix; that is,
\begin{equation*}
    \RR_X(w) = \{ r\in A^* \mid rw\in\cL(X)\cap wA^*\setminus A^+wA^+\}. 
\end{equation*}

Let $X\subseteq A^\Z$ be a shift space equipped with a Borel probability measure $\mu$. For $w\in\cL(X)$, we denote
\begin{equation*}
    [w]_X=\{x\in X\mid x_{[0,n)}=w\}
\end{equation*} 
the right cylinder defined by $w$ (\emph{right} refers here to the fact this definition involves  only  non-negative indices). When $X$ is clear from context, we often drop the subscript $X$ and write simply $[w]$. 
Note that we have $\mu([\varepsilon])=1$ and
\begin{equation*}
  \sum_{a\in A}\mu([wa])=\mu([w]).
\end{equation*}
  
The measure $\mu$ is \emph{invariant} if $\mu(S^{-1}U)=\mu(U)$ for every Borel set $U\subseteq X$. Note that $\mu$ is invariant if and only if, for every $w\in\cL(X)$,
\begin{equation*}
    \sum_{a\in A}\mu([aw])=\mu([w]).
\end{equation*}

Recall that an invariant measure $\mu$ is \emph{ergodic} if every Borel set $U$ which is invariant (i.e. $S^{-1}U=U$) has measure $0$ or $1$. A well-known equivalent condition is that $\mu$ is ergodic if and only if
\begin{equation}\label{eq:ergodic}
    \lim_{n\to\infty}\frac{1}{n}\sum_{i=0}^{n-1}\mu(U\cap S^{-i}V)=\mu(U)\mu(V),
\end{equation}
for every pair $U,V$ of Borel sets~\cite[p.~56, exercise 4(a)]{Petersen1983}. If instead of converging on average (i.e. in Ces\`{a}ro's sense) the sequence $(\mu(U\cap S^{-n}V))_{n\in\N}$ converges directly to $\mu(U)\mu(V)$ for all pairs of Borel sets $U,V$, then the measure is said to be \emph{mixing}.
  
The shift $X$ is said to be \emph{uniquely ergodic} if there is a unique invariant probability measure on $X$, in which case this unique measure is necessarily ergodic. 
An important class of uniquely ergodic shifts is given by \emph{primitive substitution shifts}, whose definition may be recalled in \cite[Section~1.4]{DurandPerrin2021} for instance.
By a theorem of Michel, every primitive substitution shift is uniquely ergodic~\cite{Michel1974}. Other important sufficient conditions for unique ergodicity of shift spaces are due to Boshernitzan~\cite{Boshernitzan1984,Boshernitzan1992}. More details on such results may be found in~\cite{Queffelec2010,DurandPerrin2021}.

\begin{example}\label{eg:thue-morse-1}
  The Thue--Morse shift $X=X(\sigma)$ with $\sigma\colon a\mapsto ab,b\mapsto ba$ is uniquely ergodic by Michel's theorem. Its unique ergodic measure $\mu$ is depicted in Figure~\ref{f:proba-morse}.
    \begin{figure}[hbt]
        \centering
        \begin{tikzpicture}[xscale=2,yscale=.18]
            \node (1)    at (0,0)   {$\tfrac{1}{1}$};
            \node (a)    at (1,6)   {$\tfrac{1}{2}$};
            \node (b)    at (1,-6)  {$\tfrac{1}{2}$};
            \node (aa)   at (2,12)  {$\tfrac{1}{6}$};
            \node (ab)   at (2,4)   {$\tfrac{1}{3}$};
            \node (ba)   at (2,-4)  {$\tfrac{1}{3}$};
            \node (bb)   at (2,-12) {$\tfrac{1}{6}$};
            \node (aab)  at (3,15)  {$\tfrac{1}{6}$};
            \node (aba)  at (3,9)   {$\tfrac{1}{6}$};
            \node (abb)  at (3,3)   {$\tfrac{1}{6}$};
            \node (baa)  at (3,-3)  {$\tfrac{1}{6}$};
            \node (bab)  at (3,-9)  {$\tfrac{1}{6}$};
            \node (bba)  at (3,-15) {$\tfrac{1}{6}$};
            \node (aaba) at (4,18)  {$\tfrac{1}{12}$};
            \node (aabb) at (4,14)  {$\tfrac{1}{12}$};
            \node (abaa) at (4,10)  {$\tfrac{1}{12}$};
            \node (abab) at (4,6)   {$\tfrac{1}{12}$};
            \node (abba) at (4,2)   {$\tfrac{1}{6}$};
            \node (baab) at (4,-2)  {$\tfrac{1}{6}$};
            \node (baba) at (4,-6)  {$\tfrac{1}{12}$};
            \node (babb) at (4,-10) {$\tfrac{1}{12}$};
            \node (bbaa) at (4,-14) {$\tfrac{1}{12}$};
            \node (bbab) at (4,-18) {$\tfrac{1}{12}$};

            \draw[above,node font=\small] (1) edge node{$a$} (a);
            \draw[below,node font=\small] (1) edge node{$b$} (b);
            \draw[above,node font=\small] (a) edge node{$a$} (aa);
            \draw[below,node font=\small] (a) edge node{$b$} (ab);
            \draw[above,node font=\small] (b) edge node{$a$} (ba);
            \draw[below,node font=\small] (b) edge node{$b$} (bb);
            \draw[above,node font=\small] (aa) edge node{$b$} (aab);
            \draw[above,node font=\small] (ab) edge node{$a$} (aba);
            \draw[below,node font=\small] (ab) edge node{$b$} (abb);
            \draw[above,node font=\small] (ba) edge node{$a$} (baa);
            \draw[below,node font=\small] (ba) edge node{$b$} (bab);
            \draw[below,node font=\small] (bb) edge node{$a$} (bba);
            \draw[above,node font=\small] (aab) edge node{$a$} (aaba);
            \draw[below,node font=\small] (aab) edge node{$b$} (aabb);
            \draw[above,node font=\small] (aba) edge node{$a$} (abaa);
            \draw[below,node font=\small] (aba) edge node{$b$} (abab);
            \draw[below,node font=\small] (abb) edge node{$a$} (abba);
            \draw[above,node font=\small] (baa) edge node{$b$} (baab);
            \draw[above,node font=\small] (bab) edge node{$a$} (baba);
            \draw[below,node font=\small] (bab) edge node{$b$} (babb);
            \draw[above,node font=\small] (bba) edge node{$a$} (bbaa);
            \draw[below,node font=\small] (bba) edge node{$b$} (bbab);
        \end{tikzpicture}
        \caption{The invariant probability measure on the Thue--Morse shift}\label{f:proba-morse}
    \end{figure}
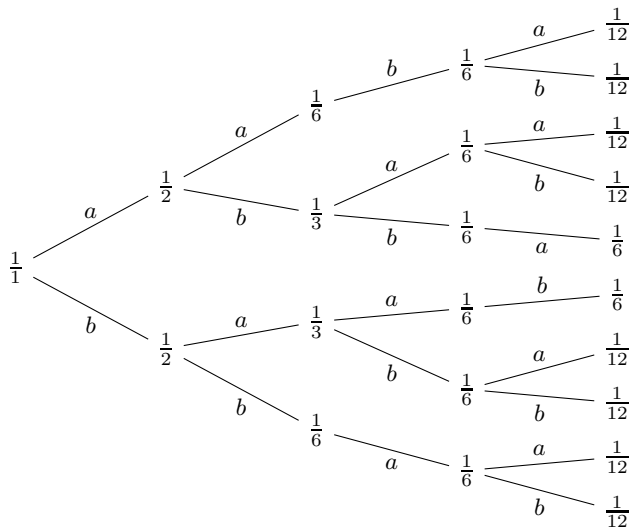
\end{example}

The aim of this paper is to study the density of languages under a given measure $\mu$, defined as follows.
\begin{definition}\label{def:density}
Let $L$ be a language on  an alphabet $A$ and $\mu$ a measure on $A^{\Z}$. The \emph{density} of $L$ under the measure $\mu$ is the following limit whenever it exists:
\begin{equation*}
    \delta_\mu(L)=\lim_{n\to\infty}\frac{1}{n}\sum_{i=0}^{n-1}\mu([L\cap A^i]).
\end{equation*}
\end{definition}

In other words, $\delta_\mu(L)$ is the Ces\`{a}ro limit of $\mu([L\cap A^n])$ as $n\to\infty$. Since $\mu(w)=0$ when $w\notin\cL(X)$, we have of course that $\delta_\mu(L)=\delta_\mu(L\cap\cL(X))$ and $\delta_\mu(\cL(X))=1$. Moreover, it follows from the definition of density that, whenever $L$ and $L'$ are disjoint,
\begin{equation*}
    \delta_\mu(L\cup L') = \delta_\mu(L)+\delta_\mu(L').
\end{equation*}
There is also a dual rule for the intersection: when $L\cup L' = A^*$, the density of the intersection is given by $\delta_\mu(L\cap L') = \delta_\mu(L)+\delta_\mu(L')-1$. 

Note however that the density of a general intersection $L \cap L'$ might not exist, even when the density of both $L$ and $L'$ exist.
For instance, if we consider the two languages
\[
    L = \{ w\in A^* : |w|\equiv 1\bmod 2\},\qquad L' = \{ w\in A^* : |w|\equiv\lfloor\log_2(|w|)\rfloor \bmod 2\},
\]
over some fixed finite alphabet $A$, then it is not hard to see that $\delta_\mu(L) = \delta_\mu(L') = \frac12$, no matter the probability measure $\mu$. 
However, the sequence of Ces\`aro sums $s_n = \frac1n\sum_{i=1}^n\mu([L\cap L'\cap A^{i}])$ has two subsequences $(s_{2^{2n}})_{n\in\N}$ and $(s_{2^{2n+1}})_{n\in\N}$ which converge to $1/3$ and $1/6$ respectively. Therefore the sequence of Ces\`aro sums does not converge, and $L\cap L'$ does not have a density.

An easy observation is that a finite language has zero density for every measure. This observation can be generalized to a larger class of languages called \emph{thin languages}, see Section~\ref{sec:Bernoulli}.

Next we introduce a language that will serve as a guiding example throughout the paper.
\begin{example}\label{ex:Lrational}
    Consider  integers $m,d,r$  with $m,d\geq 2$ and $0\leq r\leq m-1$. Define the language  $$L_r:= \{ w\in A^* : |w|_1 \equiv r \bmod m\} $$  on the alphabet  $A=\{1,2,\dots,d\}$, as the set of  finite words having a number of $1$'s congruent  to $r$ modulo $m$. This is a group language, as  defined in  the next section (see also Example \ref{ex:counting0}).  One  of our motivations for the present study is to establish the existence of the  density of $\delta_ \mu(L_r)$,  and  obtain conditions for \emph{equidistribution}, that  is, $\delta_ \mu(L_r)=1/m$.
\end{example}

\section{Group languages and skew products}
\label{s:skew}

We first recall basic definitions on group languages and skew products in Section~\ref{ss:def}. We then prove Theorem~\ref{t:first-main} in Section~\ref{ss:formula}. The proof uses any ergodic lift to the skew product of the given ergodic measure on the shift space. We also consider in Corollary~\ref{c:equidistribution} the special case where the product between the original measure and the uniform probability measure on the group is ergodic. If the shift space is uniquely ergodic, then this is even equivalent to unique ergodicity of the skew product, cf.~Proposition~\ref{prop:ue}. 
We then consider in Section~\ref{subsec:pointwise}  a  pointwise version of density.
Lastly, we focus on the case of the Fibonacci shift in Section~\ref{ss:fibo}, where we prove that the density considered as a Ces\`aro mean converges, whereas 
the sequence of measures  $(\mu([L\cap A^n]))_{n\in\N}$ does not converge in the classical sense.

\subsection{First definitions}\label{ss:def}

A \emph{rational language}  is a language recognized by a finite automaton.
Equivalently, a language is rational if and only if it is of the form $\varphi^{-1}(N)$ where $\varphi\from A^*\to M$ is a morphism onto a finite monoid $M$, and $N\subseteq M$ is a subset.

A \emph{group language} is a set of the form $L = \varphi^{-1}(K)$ where $\varphi\colon A^*\to G$ is a morphism onto a finite group  $G$ and $K\subseteq G$. Note that such languages are in particular rational, being recognized by finite groups.  For more on the topic, see~\cite{BerstelPerrinReutenauer2009}.

Let $X$ be a minimal shift space equipped with an invariant measure $\mu$. We consider the \emph{skew product} of $G$ and $X$ with respect to $\varphi$, denoted $G\skewprod_\varphi X$: it is the dynamical system $G\skewprod_\varphi X=(G\times X,T_\varphi)$, where $T_\varphi$ is the transformation defined by
\begin{equation} \label{eq:Tphi}
  T_\varphi(g,x)=(g\varphi(x_0),Sx).
\end{equation}

More generally, $T_\varphi$ satisfies, for every $n\in\Z$,
\begin{equation*}
    T_\varphi^n(g,x)=(g\varphi^{(n)}(x),S^nx),
\end{equation*}
where $\varphi^{(n)}$ is defined by
\begin{equation}\label{eq:fn}
    \varphi^{(n)}(x) = 
    \begin{cases}
        \varphi(x_{[0,n)}) & \text{if}\ n\geq 0;\\
        \varphi(x_{[n,0)})^{-1} & \text{if}\ n<0.
    \end{cases}
\end{equation}

The map $(n,x)\mapsto \varphi^{(n)}(x)$ is the \emph{cocycle} defined by $\varphi$. When the morphism $\varphi$ is clear from context, we may simply write $T$ and $G\skewprod X$. Skew products constitute one of the basic extensions of dynamical systems; see e.g.~\cite{CFS:82,Petersen1983}.

\begin{lemma}\label{l:isomskewprod}
    The skew product $G\skewprod_{\varphi}  X$ is topologically conjugate to a shift space on $G\times A$ via the map $\Psi\colon G\skewprod_{\varphi} X\to (G\times A)^\Z$ defined by 
    \begin{equation*}
        \Psi(g,x)_n=(g\varphi^{(n)}(x),x_n),\quad n\in\Z.
    \end{equation*}
\end{lemma}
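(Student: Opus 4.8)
The plan is to verify directly that $\Psi$ is a well-defined continuous bijection onto a shift space that intertwines $T_\varphi$ with the shift on $(G\times A)^\Z$, and then invoke compactness to conclude that $\Psi$ is a homeomorphism. First I would check the key algebraic identity, namely that $\Psi$ conjugates $T_\varphi$ and the shift $S'$ on $(G\times A)^\Z$: using the cocycle relation $\varphi^{(n+1)}(x)=\varphi(x_0)\varphi^{(n)}(Sx)$, which follows from the case distinction in \eqref{eq:fn}, one computes
\begin{equation*}
    \Psi(T_\varphi(g,x))_n = \Psi(g\varphi(x_0),Sx)_n = \bigl(g\varphi(x_0)\varphi^{(n)}(Sx),(Sx)_n\bigr) = \bigl(g\varphi^{(n+1)}(x),x_{n+1}\bigr) = \Psi(g,x)_{n+1},
\end{equation*}
which is exactly $S'\Psi(g,x)_n$. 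Continuity of $\Psi$ is clear since the $n$-th coordinate of $\Psi(g,x)$ depends only on $g$ and finitely many coordinates of $x$ (here $G$ carries the discrete topology and $G\times X$ the product topology).

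Next I would show $\Psi$ is injective: from $\Psi(g,x)_0=(g,x_0)$ one recovers both $g$ and $x_0$, and more generally the second coordinates of $\Psi(g,x)$ recover all of $x$, so the whole pair $(g,x)$ is determined. For surjectivity onto its image — which is what "topologically conjugate to a shift space" requires — I would let $Y=\Psi(G\skewprod X)$ and argue that $Y$ is closed and shift-invariant in $(G\times A)^\Z$: shift-invariance is immediate from the intertwining identity just proved together with invertibility of $T_\varphi$, and closedness follows because $G\times X$ is compact (as $X$ is a shift space, hence compact, and $G$ is finite) and $\Psi$ is continuous, so $Y$ is the continuous image of a compact set. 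Thus $Y$ is a shift space on the alphabet $G\times A$, and $\Psi\from G\skewprod X\to Y$ is a continuous bijection from a compact space to a Hausdorff space, hence a homeomorphism; combined with the intertwining identity, it is a topological conjugacy.

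I do not expect any serious obstacle here; the only point requiring a little care is bookkeeping with the cocycle $\varphi^{(n)}$ across the sign change at $n=0$, to confirm the relation $\varphi^{(n+1)}(x)=\varphi(x_0)\varphi^{(n)}(Sx)$ holds for all $n\in\Z$ (including $n=-1$, where it reads $\varphi^{(0)}(x)=e=\varphi(x_0)\varphi(x_0)^{-1}$, and negative $n$, where it unwinds the inverse-of-a-prefix formula). Everything else is the standard compactness-plus-continuity packaging of a conjugacy, so the proof should be short.
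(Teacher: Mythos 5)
Your proposal is correct and follows essentially the same route as the paper, which simply notes that $\Psi$ is continuous and injective (hence a homeomorphism onto its image, by compactness) and that $\Psi\circ T=S\circ\Psi$ follows from the definitions. Your version just makes explicit the cocycle bookkeeping $\varphi^{(n+1)}(x)=\varphi(x_0)\varphi^{(n)}(Sx)$ and the closedness and shift-invariance of the image, which the paper leaves implicit.
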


Note that the lemma does not say that $\Psi$ is surjective on $(G\times A)^\Z$, but simply that $\Psi$ is a topological conjugacy from $G\skewprod_{\varphi}  X$ to the subshift $\Psi(G\skewprod_{\varphi}  X)$.

\begin{proof}
    The map $\Psi$ is continuous and injective, hence it is an homeomorphism onto its image. Moreover it follows from the definitions that $\Psi\circ T_{\varphi} = S\circ\Psi$.
\end{proof}

\begin{example}\label{ex:counting0}
 Let $L_r= \{ w\in A^* : |w|_1 \equiv r \bmod m\}$ be the language from Example \ref{ex:Lrational}.  
Let $X$ be a  shift on $\{1, \dots, d\}$.  
We   consider the  skew product  $\Z/m\Z\skewprod_{\varphi} X$, where
 $ \Z/m\Z$ is the $m$-element additive group,  and $\varphi$ is the morphism $\{1,\dots,d\}^*\to\Z/m\Z$ given by $1\mapsto 1$ and $i \mapsto 0$ for $2\leq i\leq d$.  The language $L_r$  is indeed  
 a group language, since  $L_r= \varphi^{-1}(\{r\})$.  
 In particular, one has $\varphi^{(n)}(x)= |x_0 \ldots x_{n-1}|_1$ modulo $m$ for $n \geq 0$. Understanding the equidistribution properties of  the sequence  $(\varphi^{(n)}(x))_n$ in $G = \Z/m\Z$ allows one to see how often factors have a given congruence modulo $m$ of occurrences of the letter $1$; see Section \ref{ss:fibo} as an illustration.
\end{example}

\begin{example}
    \label{eg:periodicskewprod-1}
    Let $X$ be the three-element shift  defined as the  finite orbit of the periodic word $x=(abc)^\infty$, so $X=\{x,y,z\}$ with $y=Sx$, $z=Sy$. Let $\varphi\from A^*\to \Z/2\Z$ be the morphism
    \begin{equation*}
        \varphi(a) = \varphi(b) = 1,\quad \varphi(c)=0.
    \end{equation*}
    The skew product $\Z/2\Z\skewprod_{\varphi} X$ has six elements. Viewed as a shift on the alphabet $\Z/2\Z\times A$, it is the disjoint union of the orbits of two periodic words,
    \begin{equation*}
        \Psi(0,x) = ((0,a)(1,b)(0,c))^\infty\quad\text{and}\quad \Psi(1,x) = ((1,a)(0,b)(1,c))^\infty.
    \end{equation*}
  Despite its apparent simplicity, this   example  proves to be more profound    than it seems, by   offering  a  clear   way to illustrate  ergodicity (see Example \ref{eg:periodicskewprod-2}
  and \ref{eg:periodicskewprod-3}), 
irreducibility   (see Example \ref{eg:periodicskewprod-2bis} and   \ref{eg:3element}), as well as minimality properties (see Example \ref{eg:periodicskewprod-2bis}). It can also be generalized as  detailed in Example \ref{ex:3pointn}.
   The shift $X$ and the skew product $\Z/2\Z\skewprod _{\varphi} X$ are depicted in Figure~\ref{f:periodicskewprod}.
    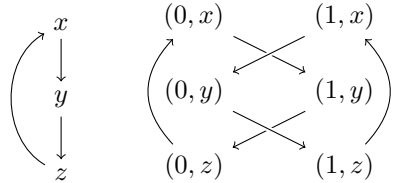
\begin{figure}[hbt]
        \centering
        \begin{tikzpicture}
            \node (abc) at (0,0)  {$x$};
            \node (bca) at (0,-1) {$y$};
            \node (cab) at (0,-2) {$z$};

            \draw[->] (abc) to (bca) ; 
            \draw[->] (bca) to (cab) ; 
            \draw[->] (cab) to [in=210,out=150] (abc) ; 
        \end{tikzpicture}
        \qquad
        \begin{tikzpicture}[xscale=2]
            \node(0x) at (0,0)   {$(0,x)$};
            \node(0y) at (0,-1)  {$(0,y)$};
            \node(0z) at (0,-2)  {$(0,z)$};
            \node(1x) at (1,0)   {$(1,x)$};
            \node(1y) at (1,-1)  {$(1,y)$};
            \node(1z) at (1,-2)  {$(1,z)$};

            \draw[->] (0x) to (1y) ; 
            \draw[->] (1y) to (0z) ; 
            \draw[->] (0z) to [out=120, in=240] (0x) ; 
            \draw[line width=2pt,white,->] (1x) to (0y) ; 
            \draw[->] (1x) to (0y) ; 
            \draw[line width=2pt,white,->] (0y) to (1z) ; 
            \draw[->] (0y) to (1z) ; 
            \draw[->] (1z) to [out=60,in=-60] (1x) ; 
        \end{tikzpicture} 
        \caption{The finite shift $X$ generated by $x=(abc)^\infty$ and its skew product with $\Z/2\Z$ from Example~\ref{eg:periodicskewprod-1}}
        \label{f:periodicskewprod}
    \end{figure}
\end{example}

\subsection{A formula for the density}\label{ss:formula}

The aim of this section is to prove our first main result, Theorem~\ref{t:first-main}.
The proof uses the existence of an ergodic measure $\bar\mu$ on the skew product which projects to $\mu$ (see Lemma \ref{l:existence} below). This known fact follows from classical results in ergodic theory (see for example~\cite{Kryloff1937,book/Denker1976,Oxtoby1952,book/Furstenberg1981}); we provide a proof for the sake of completeness.
Let us recall briefly that a \emph{factor map} between two dynamical systems $(Z,R)$ and $(Y,T)$ is a map $\pi\from Z\to Y$ which is onto and satisfies $\pi\circ R = T\circ\pi$.

\begin{lemma}\label{l:existence}
    Let $(Z,R)$ and $(Y,T)$ be two compact dynamical systems with a factor map $\pi\from Z\to Y$. Then, for each ergodic measure $\mu$ on $(Y,T)$, there is an ergodic measure $\bar\mu$ on $(Z,R)$ such that $\bar\mu\circ\pi^{-1}=\mu$. 
\end{lemma}

\begin{proof}
    Let $\M$ be the set of invariant measures on $(Z,R)$ and $\M_\mu$ be the subset of those $\zeta\in\M$ such that $\zeta\circ\pi^{-1}=\mu$. First we show that $\M_\mu\neq\emptyset$. Start by choosing a point $y\in Y$ satisfying
    \begin{equation*}
        \lim_{n\to\infty} \frac{1}{n}\sum_{i=0}^{n-1}f(T^iy) = \int_Y f\, d\mu
    \end{equation*}
    for every continuous function on $X$; the point $y$ is called a \emph{generic point} for $\mu$. It is well-known that ergodicity of $\mu$ implies that $\mu$-almost all points of $Y$ have this property (see for instance \cite[Proposition~3.7]{book/Furstenberg1981}), thus such a point $y$ exists. Take a preimage $z\in\pi^{-1}(y)$ and let $\zeta_i$ be the point mass measure concentrated on $R^iz$. Consider the sequence of Ces\`{a}ro averages $\frac{1}{n}\sum_{i=0}^{n-1}\zeta_i$. By the Banach--Alaoglu theorem, we may choose a subsequence which converges for the weak-$*$ topology in the space of all probability measures on $(Z,R)$, say:
    \begin{equation*}
        \zeta = \lim_{k\to\infty} \frac{1}{n_k}\sum_{i=0}^{n_k-1}\zeta_i.
    \end{equation*}
    Observe that $\zeta_i\circ R^{-1} = \zeta_{i+1}$, and thus $\zeta - \zeta\circ R^{-1} = \lim_{k\to\infty}\frac{1}{n_k}(\zeta_0 - \zeta_{n_k}+1)=0$, i.e. $\zeta\in\M$. Moreover for every measurable subset $B\subseteq Y$
    \begin{align*}
        \zeta\circ\pi^{-1}(B) = \lim_{k\to\infty}\frac{1}{n_k}\sum_{i=0}^{n_k-1}\zeta_i(\pi^{-1}(B)) = \lim_{k\to\infty}\frac{1}{n_k}\sum_{i=0}^{n_k-1} 1_B(T^iy) = \mu(B).
    \end{align*}
    Thus, $\zeta\in\mathcal{M}_\mu$.

    Observe that $\M_\mu$ is a closed convex subspace of $\M$, and since it is non-empty we can apply the Krein--Milman theorem to conclude that it contains an extreme point. Since the ergodic measures on $(Z,R)$ are precisely the extreme points of $\M$, it remains only to show that any extreme point $\bar\mu\in\M_\mu$ is also extreme in $\M$. 
    Assume that $\bar\mu = s\zeta' + (1-s)\zeta''$ where $0<s<1$ and $\zeta'$, $\zeta''\in\mathcal{M}$.  Let us prove that   $\bar\mu$ cannot be expressed as a non-trivial convex combination of \emph{distinct} elements of $\mathcal{M}$. Then we have 
    \[
    \mu = \bar\mu\circ\pi^{-1} = s\zeta'\circ\pi^{-1} + (1-s)\zeta''\circ\pi^{-1}.
    \] 
    Since $\mu$, being ergodic, is an extreme point in the space of invariant measures of $(Y,T)$, we conclude that $\zeta'\circ\pi^{-1} = \zeta''\circ\pi^{-1} = \mu$. Therefore $\zeta',\zeta''\in\M_\mu$, and  since $\bar\mu$ is an extreme point in $\M_\mu$ it follows that $\zeta'=\zeta''$.
    Thus, $\bar\mu$ is an extreme point in $\mathcal{M}$.
\end{proof}

Let us recall now our first main result, which expresses the density of a group language in a shift space in terms of an ergodic measure on the skew product $G\skewprod X$. This will later be specialized for shifts of finite type (Section~\ref{s:SFT}),  and then for minimal shifts (Section~\ref{s:cobounding}).
\firstmain*

\begin{proof}
    We first assume that  $\mu$ is ergodic.
    
    For $C\subseteq G$, let $U_{C} = C\times X$; in case $C = \{g\}$, $g\in G$, we write simply $U_g$. 
    Then we find
    \begin{equation*}
        \{g\}\times [L\cap A^i]_X=(\{g\}\times X)\cap T_{\varphi}^{-i}(\{gK\}\times X)=U_g\cap T_{\varphi}^{-i}U_{gK}.
    \end{equation*}
    Since the projection $G\times X\to X$ is a factor map, by Lemma~\ref{l:existence}, we may take an ergodic measure $\bar\mu$ on $G\skewprod _{\varphi} X$ which projects to $\mu$. 
    In terms of $\bar\mu$, we have
    \[
        \mu([L\cap A^i]) = \bar\mu(G\times [L\cap A^i])
        = \sum_{g\in G} \bar\mu(\{g\}\cap [L\cap A^i]) 
        = \sum_{g\in G} \bar\mu(U_g\cap T_{\varphi}^{-i}U_{gK}).
    \]
    Since $\bar\mu$ is ergodic, we can use \eqref{eq:ergodic} to obtain
    \begin{align*}
        \delta_\mu(L) &= \lim_{n\to\infty}\frac{1}{n}\sum_{i=0}^{n-1}\mu([L\cap A^i]) \\
        &= \sum_{g\in G}\lim_{n\to\infty}\frac{1}{n}\sum_{i=0}^{n-1} \bar\mu (U_g\cap T_{\varphi}^{-i}U_{gK})\\
        &= \sum_{g\in G} \bar\mu (U_g)\,\bar\mu(U_{gK}),
    \end{align*}

    We now only assume that $\mu$ is a shift-invariant probability measure on $X$.
    Let  $\mathcal{E} = \mathcal{E}(X,S)$ be the 
    set of ergodic invariant probability measures on $(X,S)$.  By e.g. \cite[Chapter 12]{book/Phelps2001}, there exists a Borel probability measure $\tau$ on $\mathcal E$ such that
    \[
        \mu=\int_{\mathcal E}\nu \,d\tau(\nu).
    \]
    Then by the dominated convergence theorem
      \[
      \delta_{\mu}(L)
      = \lim_{n\to\infty}\frac1n\sum_{i=0}^{n-1}\mu([L\cap A^i])
      =\lim_{n\to\infty}\frac1n\sum_{i=0}^{n-1}\int_{\mathcal E}\nu([L\cap A^i])\,d\tau(\nu)
      =\int_{\mathcal E} \delta_{\nu} (L)  \,d\tau(\nu),
      \]
    where the densities $\delta_\nu(L)$ exist by the first part of the proof.
    
\end{proof}

\begin{remark}\label{r:strong-convergence}
    Under the stronger assumption that $\bar\mu$ is mixing, then the density converges in a stronger sense, i.e.
    \begin{equation*}
        \delta_\mu(L) = \lim_{n\to\infty}\mu([L\cap A^n]).
    \end{equation*}
    See for instance Example~\ref{ex:full}.
\end{remark}

In case $\nu\times\mu$ is ergodic on the skew product, the above theorem yields the following simple formula for the density.
	
\begin{corollary}\label{c:equidistribution}
    Let $X$ be a shift space on a finite alphabet $A$ with an ergodic measure $\mu$ and let $\varphi\from A^*\to G$ be a morphism onto a finite group $G$ with uniform probability measure $\nu$.     If the product measure $\nu \times \mu$ is ergodic on $G \times X$, then for every group language $L=\varphi^{-1}(K)$ with $K\subseteq G$,  $\delta_\mu(L) = |K|/|G|$.
\end{corollary}

\begin{proof}
    Since $(\nu \times \mu)(U_C)=|C|/|G|$ for all $C\subseteq G$, the above theorem with $\bar\mu=\nu\times\mu$ yields $\delta_\mu(L) = \sum_{g\in G}|K|/|G|^2=|K|/|G|$.
\end{proof}

There are however many examples where $\nu\times\mu$ is not ergodic on the skew product, such as the simple one below. 

\begin{example}\label{eg:periodicskewprod-2}
    Let $X$   be the three-element shift,  $\varphi$ be as in Example~\ref{eg:periodicskewprod-1}, and $L=\varphi^{-1}(0)$. The shift $X$ is uniquely ergodic, with ergodic measure $\mu$ given by the uniform probability measure. Moreover,
    \begin{equation*}
        L\cap\cL(X)=(abc)^*\{\varepsilon,ab\}\cup(bca)^*\{\varepsilon\}\cup (cab)^*\{\varepsilon,c\},
    \end{equation*}
    and thus,
    \begin{equation*}
        \mu([L\cap A^i])=
        \begin{cases}
            1 & \text{if $i\equiv 0\bmod 3$}\\
            1/3 & \text{otherwise.}
        \end{cases}
    \end{equation*}

    It follows that $\delta_\mu(L)$ is given by $\delta_\mu(L)=\left(1+1/3+1/3\right)/3=5/9$. In contrast, if $\nu\times\mu$ would be ergodic, then we should find instead $1/2$. The fact that $\nu\times\mu$ is not ergodic can also be observed directly by noting that $\Z/2\Z\skewprod_{\varphi}  X$ has two invariant subsets, each of $\nu\times\mu$-measure~$1/2$ (which are in fact minimal invariant subsets). 

    We can also compute $\delta_\mu(L)=5/9$ using the formula from Theorem~\ref{t:first-main}. Indeed, there are two ergodic measures $\bar\mu_1$ and $\bar\mu_2$ which project to $\mu$, each supported on one of the two invariant subsets of $\Z/2\Z \skewprod_{\varphi}  X$. For each $i$, $\bar\mu_i(\{h\}\times X)$ is either $1/3$ or $2/3$, depending on whether $h=0$ or $1$. Therefore, for $i=1$ or $2$, we find ${\delta_\mu (L) = \bar\mu_i(\{0\}\times X)^2 + \bar\mu_i(\{1\}\times X)^2 = 5/9}$.
\end{example}

\begin{example}\label{ex:3pointn}

We can in fact generalize the above example as follows. Take $d\geq 3$ and consider an alphabet $A = \{a_0,\ldots,a_{d-1}\}$ of size $d$. Let $X$ be the $d$-element shift space generated by the periodic word $(a_0\ldots a_{d-1})^\infty$ and $\varphi\from A^*\to \Z/d\Z$ mapping $a_0$ to $0$ and $a_i$ to 1 for $0<i<d$. Then the density of $\varphi^{-1}(0)$ is $(2n-1)/d^2$. Thus the actual value of the density tends to be twice what ergodicity of $\nu\times\mu$ would yield (namely $1/d$), in the sense that $d\delta_\mu(L)\to 2$ as $d\to\infty$. 
\end{example}

Next we show that if $X$ is uniquely ergodic, then ergodicity of $\nu\times\mu$ implies unique ergodicity of $G\skewprod_{\varphi}  X$. This is used  e.g. in Example~\ref{ex:counting2}. It was proved by Veech~\cite{Veech1969} for the case where $X$ is a binary coding of an irrational rotation and $G=\Z/2\Z$. We show below how Veech's argument can be adapted in a straightforward way to the case where $X$ is any uniquely ergodic shift and $G$ is a finite group. The result is true more generally when $G$ is a compact group with normalized Haar measure $\nu$, cf.~\cite[Proposition 3.10]{book/Furstenberg1981}.

\begin{proposition}\label{prop:ue}
    Let $X$ be a uniquely ergodic shift on $A$ with ergodic measure $\mu$ and $\varphi\from A^*\to G$ be a morphism onto a finite group $G$ with uniform probability measure $\nu$. If $\nu\times\mu$ is an ergodic measure on $G\skewprod  _{\varphi}X$, then $G\skewprod _{\varphi}X$ is uniquely ergodic.
\end{proposition}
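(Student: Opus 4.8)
The plan is to adapt Veech's argument, which proceeds by showing that any invariant probability measure on $G\rtimes X$ must project down to $\mu$ on $X$, and then using the group structure together with ergodicity of $\nu\times\mu$ to force it to be $\nu\times\mu$ itself. First I would observe that $G$ acts on $G\rtimes X$ by left translations, $L_h(g,x) = (hg, x)$, and that each $L_h$ commutes with the skew product transformation $T = T_\varphi$ (this is immediate from the formula $T(g,x)=(g\varphi(x_0),Sx)$). Consequently, if $\lambda$ is any $T$-invariant probability measure on $G\rtimes X$, then so is $(L_h)_*\lambda$ for every $h\in G$, and hence so is the average $\bar\lambda = \frac{1}{|G|}\sum_{h\in G}(L_h)_*\lambda$, which is moreover invariant under all the left translations $L_h$.

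The key steps, in order, are: (1) the projection $\pi\from G\rtimes X\to X$, $(g,x)\mapsto x$, satisfies $\pi\circ T = S\circ\pi$, so $\pi_*\lambda$ is an $S$-invariant probability measure on $X$; since $X$ is uniquely ergodic, $\pi_*\lambda = \mu$, and likewise $\pi_*\bar\lambda = \mu$. (2) Because $\bar\lambda$ is invariant under all left translations by $G$ and projects to $\mu$, a disintegration argument shows $\bar\lambda = \nu\times\mu$: the $G$-translation invariance forces each conditional measure on a fibre $G\times\{x\}$ to be the uniform measure $\nu$ (using that $\nu$ is the unique translation-invariant probability on the finite group $G$). (3) Now $\bar\lambda = \nu\times\mu$ is ergodic by hypothesis, and it is an average of the $T$-invariant measures $(L_h)_*\lambda$; since an ergodic measure is extremal in the simplex of invariant measures, each $(L_h)_*\lambda$ must equal $\nu\times\mu$, in particular $\lambda = (L_e)_*\lambda = \nu\times\mu$. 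Hence $\nu\times\mu$ is the only invariant probability measure, i.e. $G\rtimes X$ is uniquely ergodic.

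I expect the main obstacle to be step (2): making precise the disintegration argument that a $G$-left-invariant measure projecting to $\mu$ must be $\nu\times\mu$. One clean way to avoid heavy disintegration machinery is to argue directly with cylinder sets via the conjugacy $\Psi$ of \cref{l:isomskewprod} (so $G\rtimes X$ is a subshift of $(G\times A)^\Z$): for a cylinder $[(h_0,w_0)\cdots(h_{n-1},w_{n-1})]$ one checks that left translation by $h$ sends it to $[(hh_0,w_0)\cdots(hh_{n-1},w_{n-1})]$, so $L_h$-invariance of $\bar\lambda$ together with the fact that these translates, as $h$ ranges over $G$, partition the cylinder $[w_0\cdots w_{n-1}]$ of $X$ (pulled back), yields $\bar\lambda$ of each $= \frac{1}{|G|}\mu(w_0\cdots w_{n-1})$, which is exactly the value $(\nu\times\mu)$ assigns; since cylinders generate the Borel $\sigma$-algebra, $\bar\lambda = \nu\times\mu$. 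Steps (1) and (3) are routine: (1) is unique ergodicity of $X$ applied to a pushforward, and (3) is the standard fact that ergodic measures are extreme points of the (compact convex) set of invariant measures, so they cannot be written as a nontrivial convex combination of distinct invariant measures.
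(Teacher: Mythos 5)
Your proof is correct and follows essentially the same route as the paper: average the invariant measure over the commuting left $G$-action, use unique ergodicity of $X$ to pin down the projection, identify the average with $\nu\times\mu$, and invoke extremality of the ergodic measure $\nu\times\mu$ to conclude. The only (cosmetic) difference is that you verify $\bar\lambda=\nu\times\mu$ on cylinder sets through the conjugacy $\Psi$, whereas the paper computes directly on product sets $\{h\}\times E$, which generate the product $\sigma$-algebra and make step (2) immediate.
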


\begin{proof}
    Let $\zeta$ be an invariant measure on $G\skewprod _{\varphi}X$. Observe that the measure $\zeta\circ P_X^{-1}$, where $P_X\from G\skewprod _{\varphi}X\to X$ denotes the projection on the second component, is an invariant measure on $X$, thus it must be equal to $\mu$ as $X$ is uniquely ergodic. In other words,
    \begin{equation*}
        \zeta(G\times E) = \mu(E),\quad \text{for every measurable set $E\subseteq X$.}
    \end{equation*}

    Let $g\in G$ act on the left of $G\skewprod _{\varphi}X$ by $g(h,x) = (gh,x)$. This action is $T_{\varphi}$-commuting, as well as measure-preserving as is easily checked on rectangular sets (i.e.  sets of the form $F\times E$, where $F \subseteq G$, and  $E\subseteq X$ is  measurable). Thus the measure $g\zeta$ defined by $g\zeta(F) = \zeta(gF)$ is also an invariant measure. We claim that the average measure $\bar\zeta = (\sum_{g\in G}g\zeta)/|G|$ is equal to $\nu\times\mu$. Indeed, for every measurable set $E\subseteq X$ and $h\in G$, we have
    \begin{equation*}
        \bar\zeta(\{h\}\times E) = \frac{1}{|G|}\sum_{g\in G}\zeta(\{gh\}\times E) = \frac{1}{|G|}\zeta(G\times E) = \frac{1}{|G|}\mu(E).
    \end{equation*}

    Thus we conclude that $\nu\times\mu = \bar\zeta$. If $\nu\times\mu$ is ergodic, then it is an extremal point in the convex set of invariant measures of $G\skewprod _{\varphi} X$. Since $\nu\times\mu=\bar\zeta$ is a uniform convex combination of the measures $g\zeta$, it follows that $g\zeta= \nu\times\mu$ for all $g\in G$.
\end{proof}

\subsection{Pointwise densities}\label{subsec:pointwise}

We show  in this section that the concept of  density of a language can be adapted to Ces\`aro averages along orbits of individual points (see Theorem \ref{theo:pointwise}).  
This leads to another proof of the fact the density always exists for group languages (as stated in Theorem \ref{t:first-main}) and an expression for the density as an integral of
pointwise densities.
For the definition of the  pointwise density below, we recall the notation $\varphi^{(i)}(x) = \varphi(x_{[0,i)})$ from \eqref{eq:fn}.

\begin{definition} \label{def:pointwisedensity}
    Let $X$ be a shift space on a finite alphabet $A$ with a shift-invariant measure $\mu$ and let $\varphi\from A^*\to G$ be a morphism onto a finite group $G$.   
    For every group language $L=\varphi^{-1}(K)$, where $K\subseteq G$, we define the \emph{pointwise density} of $L$ along the orbit of a point $x \in X$ as the following limit whenever it exists:
    \[
        \dot\delta_L (x)=  \lim _{n \rightarrow \infty} \frac{1}{n}\sum_{i=0}^{n-1} {1}_{K} (\varphi ^{(i)}(x)) ,
    \]
    where $1_{K}$ stands for the indicator function of the set $K$.
\end{definition}

The main result of the section,  namely Theorem~\ref{theo:pointwise},  states the a.e. existence of pointwise densities for all group languages under all invariant measures. 

\begin{example} \label{ex:counting1}
    We continue Example \ref{ex:counting0}.
    Recall that $A = \{1,\dots,d\}$ and $L = \{ w\in A^* : |w|_1 \equiv r\bmod m\}$, $0\leq r <m$. 
    Alternatively $L= \varphi^{-1}(\{r\})$ where $\varphi\colon A^*\to \Z/m\Z$ is the morphism which sends $1\in A$ to $1\in\Z/m\Z$, and all other letters of $A$ to $0$.  
    The pointwise density $\dot\delta_L(x) $, when it exists, is the frequency of occurrences of prefixes in $x$ having a number of $1$'s congruent to $r$ mod $m$. 
    In particular, Theorem \ref{theo:pointwise} below states that this frequency exists for $\mu$-almost every $x$, for all invariant probability measures $\mu$ on $A^\Z$.
\end{example}

We now turn to Theorem~\ref{theo:pointwise}.
First, let us recall part of Birkhoff's Ergodic Theorem; for a complete statement see \cite[Theorem~2.3]{Petersen1983}. 
Let $(X,T)$ be a dynamical system with an invariant probability measure $\mu$. 
Given an $\mathrm{L}^1$ map $f\from X\to\R$, let us define the \emph{time average} of $f$ as
\[
    \bar f(x) = \lim_{n\to\infty}\frac1n\sum_{k=0}^{n-1}f(T^kx),
\]
whenever it exists.
The Ergodic Theorem states that $\bar f$ exists $\mu$-a.e and moreover
\[
    \int_X \bar f\, d\mu = \int_X f\, d\mu.
\]

\begin{theorem} \label{theo:pointwise}
    Let $X$ be a shift space on a finite alphabet $A$ with a shift-invariant probability measure $\mu$ and  a morphism $\varphi\colon A^*\to G$ onto a finite group $G$. 
    For every group language $L = \varphi^{-1}(K)$, the pointwise density $\dot\delta_L(x)$ exists for $\mu$-almost every $x\in X$. 
    Moreover, the density $\delta_\mu(L)$ exists and is given by
    \[
        \delta_\mu(L) = \int_X \dot\delta_L\,d\mu.
    \]
\end{theorem}

\begin{proof}
    Let $\bar\mu$ be any $T_{\varphi}$-invariant measure on the skew product $G\skewprod _{\varphi} X$ that projects to $\mu$ (for instance the product measure $\nu\times\mu$, where $\nu$ is the uniform probability measure on $G$).
    For $g\in G$, let $f_g$ be the indicator function of $\{g\}\times X$. 
    By the Ergodic Theorem applied to $G\skewprod_{\varphi}X$, the time average function $\bar f_g$ exists on some subset $E_g \subseteq G \times X$ with $\bar\mu(E_g)=1$.

    Next, notice that $\varphi^{(i)}(x) = g$ if and only if $T_\varphi^i(1_G,x) \in \{g\}\times X$, so
    \[
        \dot\delta_L(x) = \lim_{n\to\infty}\frac1n\sum_{i=0}^{n-1}{1}_K(\varphi^{(i)}(x))
        =\sum_{g\in K}\lim_{n\to\infty}\frac1n\sum_{i=0}^{n-1}f_g(T_\varphi^i(1_G,x))
        = \sum_{g\in K}\bar f_g(1_G,x).
    \]

    Let $E = \bigcap_{g \in G} E_g$ and $F = \{ x\in X : (1_G,x)\in E\}$. 
    Then $\dot\delta_L(x)$ certainly exists for all $x\in F$ by the above, so it suffices to show that $\mu(F)=1$. 
    But observe that $T_\varphi^i(h,x)\in \{g\}\times X$ is equivalent to $T_\varphi^i(1_G,x)\in\{h^{-1}g\}\times X$, so that
    \[
        \bar f_g(h,x) = \lim_{n\to\infty}\frac1n\sum_{i=0}^{n-1}f_g(T_\varphi^i(h,x))
        = \lim_{n\to\infty}\frac1n\sum_{i=0}^{n-1}f_{h^{-1}g}(T_\varphi^i(1_G,x))
        = \bar f_{h^{-1}g}(1_G,x).
    \]
    It follows that if $(h,x)\in E$, then $\bar f_{g}(1_G,x) = \bar f_{hg}(h,x)$ for all $g\in G$, and thus $x\in F$. 
    Therefore we have $E\subseteq G\times F$. 
    Since $\bar\mu$ projects to $\mu$, we conclude that $\mu(F) = \bar\mu(G\times F) \geq \bar\mu(E)$. But $E$ is a finite intersection of sets of $\bar\mu$-measure 1, thus $\bar\mu(E)=1$, and $\mu(F)=1$.  

    For the last part of the statement, we first note that for $w\in A^i$, we have $\varphi^{(i)}(x)\in K$ if and only if $x\in[w]$ for some $w\in L\cap A^i$, and thus
    \[
        \int_X {1}_K(\varphi^{(i)}(x))\,d\mu = \sum_{w\in L\cap A^i} \int_X{1}_{[w]}(x)\,d\mu = \mu([L\cap A^i]).
    \]
    Hence we have
    \[
\begin{aligned}
            \int_X \dot\delta_L(x)\,d\mu 
            &= \int_X \lim_{n\to\infty}\frac1n\sum_{i=0}^{n-1} 1_K(\varphi^{(i)}(x))\,d\mu\\
            &=  \lim_{n\to\infty}\frac1n\sum_{i=0}^{n-1}\int_X 1_K(\varphi^{(i)}(x))\,d\mu\\
            &= \lim_{n\to\infty}\frac1n\sum_{i=0}^{n-1}\mu([L\cap A^i]) = \delta_\mu(L),
\end{aligned}
    \]
    where the integral and limit can be swapped by the dominated convergence theorem. 
\end{proof}

\begin{remark}\label{rem:pointwise}
    We can also recover the formula for $\delta_\mu (L)$ in Theorem~\ref{t:first-main} using pointwise densities. 
    Indeed, suppose that  $\mu$ is an ergodic measure on $X$, and let $\bar\mu$ be an ergodic measure on $G\rtimes_\varphi X$ that projects to $\mu$ (which exists by Lemma~\ref{l:existence}).
    We can then use the trick from the proof of Theorem~\ref{t:first-main} involving the sets $U_C=C \times X$ as follows:
    \[
\begin{aligned}
\int_X \dot\delta_L  \, d\mu &= \int_X \lim_{n\to\infty} \frac{1}{n}\sum_{i=0}^{n-1} {1}_K(\varphi^{(i)}(x)) \, d\mu(x)
= \lim_{n\to\infty} \frac{1}{n}\sum_{i=0}^{n-1} \int_X {1}_K(\varphi^{(i)}(x)) \, d\mu(x) \\
&= \lim_{n\to\infty} \frac{1}{n} \sum_{i=0}^{n-1} \mu(\{x \in X: \varphi^{(i)}(x) \in K\})
=\lim_{n\to\infty}\frac{1}{n}\sum_{i=0}^{n-1} \bar\mu(G \times \{x \in X: \varphi^{(i)}(x)\in K\}) \\
&=\lim_{n\to\infty}\frac{1}{n}\sum_{i=0}^{n-1}  \sum_{g \in G} \bar\mu(U_g \cap T_{\varphi}^{-i}U_{gK}) = \sum_{g \in G} \bar\mu(U_g) \bar\mu(U_{gK}).
\end{aligned}
    \]
\end{remark}
  
\begin{example} \label{eg:periodicskewprod-3}
    Consider the three-point example introduced in Example \ref{eg:periodicskewprod-1}, where $X$ is the orbit closure of the two-sided sequence $x=(abc)^\infty$, the group is $\Z/2\Z$ (with its additive structure), and the morphism $\varphi\from \{a,b,c\}^*\to \Z/2\Z$ maps $a,b$ to $1$ and $c$ to 0. 
    The sequences $\varphi^{(i)}(y)$ for $y = x, Sx, S^2x$ are in turn,
\[
\begin{array}{lll}
	100 & 100 & 100 \dots\\
	110 & 110 & 110 \dots \\
	010 & 010 & 010 \dots 
\end{array}
\]
For $L=\varphi^{-1}(0)$, the pointwise densities $\dot\delta_L(y)$ are the average frequencies of 0 in each of these sequences, which are in turn $2/3, 1/3, 2/3$. 
As $X$ is a finite space with a uniform probability measure, the integral of $\dot\delta_L$ is simply the average $(2/3 +  1/3 + 2/3)/3 = 5/9$. Thus $\delta_\mu(L)=5/9$ by Theorem~\ref{theo:pointwise}, which agrees with the value found in Example~\ref{eg:periodicskewprod-2}.
Note that the two ergodic measures on the skew product $\Z/2\Z \rtimes_\varphi X$ are not fully supported and that the function $\dot\delta_L(x)$ is not constant.
\end{example}

We now give two examples which show how our equidistribution statement, namely Corollary~\ref{c:equidistribution}, together with  the unique ergodicity  result from Proposition \ref{prop:ue}, can be applied in this pointwise setting. Both examples will be pursued in Section \ref{s:examples}  for Sturmian shifts (see Examples~\ref{ex:Fibocounting} and \ref{ex:Fibocontfrac}).

\begin{example}\label{ex:counting2}
 This example  is inspired by the equidistribution results   from  Veech~\cite{Veech1969,Veech1975} for irrational rotations in  continuation of Example \ref{ex:counting0}.
 Let $L_r$ and $\varphi$ be as in Example~\ref{ex:counting0}.
 Assume that $X$ is a uniquely ergodic shift whose ergodic measure $\mu$ is such that the product measure $\nu\times\mu$ is ergodic on $G\rtimes_\varphi X$; or equivalently, $G\rtimes_\varphi X$ is uniquely ergodic (Proposition~\ref{prop:ue}).  
 Then Corollary~\ref{c:equidistribution} yields that $\delta_{\mu} (L_r)$ exists and  equals $ 1/m$.

 We now  revisit this result  in terms of   elements of $X$.  Recall that  $1_{\{r\}}$ stands for the indicator function of  the set $\{r\}$ in  $\Z/m\Z$.
Applying unique ergodicity to  the continuous function $1_{\{r\}} \times 1$ on $G\skewprod_{\varphi} X$ yields indeed  that for every $x \in X$
$$ \begin{aligned} 
\lim _{n \rightarrow + \infty}
 \frac{1}{n} \sum_{i=0}^{n-1}  (1_{\{r\}} \times 1) T_{\varphi}^i(0,x)) &=  \lim_{n \rightarrow + \infty} \frac{1}{n} \sum_{i=0}^{n-1} ( 1_{\{r\}} \times 1) (( \varphi^{(i)}(x),S^ix))\\
&=  \lim_{n \rightarrow + \infty} \frac{1}{n} \sum_{i=0}^{n-1} 1_{\{r\}}( \varphi^{(i)}(x)) = \int  1_{\{r\}} \times 1  
=1/m .
\end{aligned}
$$
 In other words, for every $x \in X$,  and for every $r$, one has:
    \begin{equation*}
        \frac{1}{N} \card \{ n : 0 \leq n \leq N-1,\ |x_0 \ldots x_{n-1}|_1  \equiv r  \bmod m\} \to \frac{1}{m}.
    \end{equation*}
  \end{example}
  
  In the next example, we consider a skew product with a non-Abelian skewing group, namely $G(2)= \GL(2, {\mathbb Z}/2{\mathbb Z})$, i.e. the group of $2 \times 2$  matrices with entries in $\Z/2\Z$ and determinant~$1$. The example could also be carried out with $G(m)$ for arbitrary $m\geq 2$, but we treat only the case $m=2$ for simplicity.
    It is inspired by  the work of Jager and Liardet~\cite{JL:88} (see also~\cite{Szusz,Moeckel,Borda2025} for related works), and is  motivated by distribution properties for convergents in continued fraction expansions. 
  
  \begin{example}\label{ex:convergents}
    Let $X$ be a shift on the alphabet $ A=\{1,2\}$ and consider the morphism
    \begin{equation*}
        \varphi \from A^*\to G(2),\quad  k \mapsto 
        \begin{pmatrix} 
            0 & {1} \\ 
            {1} & \overline{k}
        \end{pmatrix},
    \end{equation*}
    where $\overline{k}$ stands for the congruence  class of the integer $k$ modulo $2$. Note that this morphism is onto.
    Let $x=(x_n)_{n\in\Z} \in X$. Consider the  real number in the unit interval $[0,1]$ that admits $(x_n)_{n\in\N}$  as its sequence of partial quotients and let $(p_n(x)/q_n(x))_{n\in\N}$ stand for the  associated sequence of  rational approximations. 
    Recall that one has $q_{-1}(x)=0$, $p_{-1}(x)=1$, $q_0(x)=1$, $p_0(x)=0$, and for all positive $n$,
    \[
\begin{aligned}
            q_{n+1}(x) &=x_{n+1} q_n(x) +q_{n-1}(x), \\ 
            p_{n+1}(x)&=x_{n +1} p_n (x)+p_{n-1}(x),
\end{aligned}
    \] 
    \cite[Theorem~149]{book/Hardy2008} and moreover 
    \begin{equation*}
        \varphi^{(n)}(x)= 
        \begin{pmatrix} 
            \overline{p_{n-1} (x)} &\overline{ p_n (x)}\\
            \overline{q_{n-1} (x)}&   \overline{q_n(x)}
        \end{pmatrix}.
    \end{equation*}
In the case where the skew product $G(2) \skewprod_{\varphi}  X$ turns out to be  uniquely  ergodic, the following distribution results for the sequence  $( \varphi^{(n)}(x))_{n\in\N}$ in the group $G(2)$  can be deduced:    for every $k=1,2$ and for every $x \in X$
    \begin{gather*}
        \lim_{N\to\infty}\frac{1}{N}  \card\{ n:  1\leq  n  \leq N,\  q_n(x) \equiv 0 \bmod 2\}= \frac{1}{3},\\
        \lim_{N\to\infty}\frac{1}{N}  \card\{ n: 1\leq  n  \leq N,\  q_n(x) \equiv 1 \bmod 2\}= \frac{2}{3}.
    \end{gather*}
    The numbers $1/3$ and $2/3$ come from the counting measure on $G(2)$. Consider indeed  the entries of index $(2,2)$
     in the matrices of $G(2)$. There   are twice as many  matrices  in $G(2)$ for which this entry is odd than  matrices for which it is even.
     We will show in Example~\ref{ex:Fibocontfrac} that this analysis applies in particular to the Fibonacci shift, because there unique ergodicity of the skew product follows from Theorem~\ref{t:dendric-ergodic}.
\end{example}

\subsection{An example in the substitutive Fibonacci shift}
\label{ss:fibo}
  
We now proceed to illustrate Theorem~\ref{t:first-main} (and more specifically Corollary~\ref{c:equidistribution}, see also Example \ref{ex:counting1}) in the substitutive Fibonacci shift, whose precise definition is recalled below. We consider the language $L = \varphi^{-1}(0)$ where $\varphi\from \{a,b\}^*\to\Z/2\Z$ is the morphism defined by $\varphi(a) = 1$ and $\varphi(b)=0$; in other words,
\begin{equation*}
    L = \{ w\in \{a,b\}^* : |w|_a \equiv 0 \bmod 2\}.  
\end{equation*}

We show that $\delta_\mu(L)=1/2$ for $\mu$ the unique ergodic measure on the substitutive Fibonacci shift (Proposition~\ref{p:fibo}), but also that the sequence $(\mu([L\cap A^n]))_{n\in\N}$ does not converge in the classical sense (Proposition~\ref{p:lim-fibo}). In particular, the measure $\nu\times\mu$ on the skew product is ergodic but not mixing, cf.~Remark~\ref{r:strong-convergence}.

Consider the substitution $\sigma\colon a\mapsto ab, b\mapsto a$, called the \emph{Fibonacci substitution}. The substitution $\sigma$ is primitive; thus the shift space $X=X(\sigma)$ generated by $\sigma$, called the \emph{substitutive Fibonacci shift}, or \emph{Fibonacci shift} for short, is uniquely ergodic by Michel's theorem (this alternatively follows from Boshernitzan's criterion~\cite{Boshernitzan1984}). Its unique ergodic measure $\mu$ viewed as a map on $\cL(X)$ is depicted in Figure~\ref{f:proba-fibo}.

\begin{figure}[hbt]
    \centering
    \begin{tikzpicture}[xscale=1.45,yscale=.05,every node/.style={circle,minimum size=4ex,inner sep=1pt},font=\small]
        \node[circle,draw] (1)    at (0,0) {$1$};

        \node[circle] (a)    at (1,10) {$\lambda^{-1}$};   
        \node[circle,draw] (b)    at (1,-10) {$\lambda^{-2}$};

        \node[circle,draw] (aa)   at (2,20) {$\lambda^{-3}$};   
        \node[circle] (ab)   at (2,0) {$\lambda^{-2}$};
        \node[circle] (ba)   at (2,-20) {$\lambda^{-2}$};

        \node[circle,draw] (aab)  at (3,30) {$\lambda^{-3}$};   
        \node[circle,draw] (aba)  at (3,10) {$\lambda^{-2}$};
        \node[circle,draw] (baa)  at (3,-10) {$\lambda^{-3}$};   
        \node[circle] (bab)  at (3,-30) {$\lambda^{-4}$};

        \node[circle] (aaba) at (4,40) {$\lambda^{-3}$};   
        \node[circle] (abaa) at (4,20) {$\lambda^{-3}$};
        \node[circle,draw] (abab) at (4,0) {$\lambda^{-4}$};
        \node[circle,draw] (baab) at (4,-20) {$\lambda^{-3}$};   
        \node[circle,draw] (baba) at (4,-40) {$\lambda^{-4}$};

        \node[circle,draw] (aabaa) at (5,50) {$\lambda^{-5}$};   
        \node[circle] (aabab) at (5,30) {$\lambda^{-4}$};   
        \node[circle] (abaab) at (5,10) {$\lambda^{-3}$};
        \node[circle] (ababa) at (5,-10) {$\lambda^{-4}$};
        \node[circle] (baaba) at (5,-30) {$\lambda^{-3}$};   
        \node[circle] (babaa) at (5,-50) {$\lambda^{-4}$};

        \node[circle,draw] (aabaab) at (6,60) {$\lambda^{-5}$};   
        \node[circle,draw] (aababa) at (6,40) {$\lambda^{-4}$};   
        \node[circle,draw] (abaaba) at (6,20) {$\lambda^{-3}$};
        \node[circle,draw] (ababaa) at (6,0) {$\lambda^{-4}$};
        \node[circle,draw] (baabaa) at (6,-20) {$\lambda^{-5}$};   
        \node[circle] (baabab) at (6,-40) {$\lambda^{-4}$};   
        \node[circle] (babaab) at (6,-60) {$\lambda^{-4}$};

        \node[circle] (aabaaba) at (7,70) {$\lambda^{-5}$};   
        \node[circle] (aababaa) at (7,50) {$\lambda^{-4}$};   
        \node[circle] (abaabaa) at (7,30) {$\lambda^{-5}$};
        \node[circle,draw] (abaabab) at (7,10) {$\lambda^{-4}$};
        \node[circle,draw] (ababaab) at (7,-10) {$\lambda^{-4}$};
        \node[circle,draw] (baabaab) at (7,-30) {$\lambda^{-5}$};   
        \node[circle,draw] (baababa) at (7,-50) {$\lambda^{-4}$};   
        \node[circle,draw] (babaaba) at (7,-70) {$\lambda^{-4}$};

        \node[circle] (aabaabab) at (8,80) {$\lambda^{-5}$};   
        \node[circle] (aababaab) at (8,60) {$\lambda^{-4}$};   
        \node[circle] (abaabaab) at (8,40) {$\lambda^{-5}$};
        \node[circle] (abaababa) at (8,20) {$\lambda^{-4}$};
        \node[circle] (ababaaba) at (8,0) {$\lambda^{-4}$};
        \node[circle] (baabaaba) at (8,-20) {$\lambda^{-5}$};   
        \node[circle] (baababaa) at (8,-40) {$\lambda^{-4}$};   
        \node[circle] (babaabaa) at (8,-60) {$\lambda^{-5}$};
        \node[circle,draw] (babaabab) at (8,-80) {$\lambda^{-6}$};

        \draw[above,node font=\small] (1)   edge node{$a$} (a);
        \draw[below,node font=\small] (1)   edge node{$b$} (b);
        \draw[above,node font=\small] (a)   edge node{$a$} (aa);
        \draw[below,node font=\small] (a)   edge node{$b$} (ab);
        \draw[below,node font=\small] (b)   edge node{$a$} (ba);
        \draw[above,node font=\small] (aa)  edge node{$b$} (aab);
        \draw[above,node font=\small] (ab)  edge node{$a$} (aba);
        \draw[above,node font=\small] (ba)  edge node{$a$} (baa);
        \draw[below,node font=\small] (ba)  edge node{$b$} (bab);
        \draw[above,node font=\small] (aab) edge node{$a$} (aaba);
        \draw[above,node font=\small] (aba) edge node{$a$} (abaa);
        \draw[below,node font=\small] (aba) edge node{$b$} (abab);
        \draw[below,node font=\small] (baa) edge node{$b$} (baab);
        \draw[below,node font=\small] (bab) edge node{$a$} (baba);

        \draw[above,node font=\small] (aaba) edge node{$a$} (aabaa);   
        \draw[below,node font=\small] (aaba) edge node{$b$} (aabab);   
        \draw[below,node font=\small] (abaa) edge node{$b$} (abaab);
        \draw[below,node font=\small] (abab) edge node{$a$} (ababa);
        \draw[below,node font=\small] (baab) edge node{$a$} (baaba);   
        \draw[below,node font=\small] (baba) edge node{$a$} (babaa);

        \draw[above,node font=\small] (aabaa) edge node{$b$} (aabaab);   
        \draw[above,node font=\small] (aabab) edge node{$a$} (aababa);   
        \draw[above,node font=\small] (abaab) edge node{$a$} (abaaba);
        \draw[above,node font=\small] (ababa) edge node{$a$} (ababaa);
        \draw[above,node font=\small] (baaba) edge node{$a$} (baabaa);   
        \draw[below,node font=\small] (baaba) edge node{$b$} (baabab);   
        \draw[below,node font=\small] (babaa) edge node{$b$} (babaab);

        \draw[above,node font=\small] (aabaab) edge node{$a$} (aabaaba) ;   
        \draw[above,node font=\small] (aababa) edge node{$a$} (aababaa) ;   
        \draw[above,node font=\small] (abaaba) edge node{$a$} (abaabaa) ;
        \draw[below,node font=\small] (abaaba) edge node{$b$} (abaabab) ;
        \draw[below,node font=\small] (ababaa) edge node{$b$} (ababaab) ;
        \draw[below,node font=\small] (baabaa) edge node{$b$} (baabaab) ;   
        \draw[below,node font=\small] (baabab) edge node{$a$} (baababa) ;   
        \draw[below,node font=\small] (babaab) edge node{$a$} (babaaba) ;
                                                          $ $
        \draw[above,node font=\small] (aabaaba) edge node{$b$} (aabaabab);   
        \draw[above,node font=\small] (aababaa) edge node{$b$} (aababaab);   
        \draw[above,node font=\small] (abaabaa) edge node{$b$} (abaabaab);
        \draw[above,node font=\small] (abaabab) edge node{$a$} (abaababa);
        \draw[above,node font=\small] (ababaab) edge node{$a$} (ababaaba);
        \draw[above,node font=\small] (baabaab) edge node{$a$} (baabaaba);   
        \draw[above,node font=\small] (baababa) edge node{$a$} (baababaa);   
        \draw[above,node font=\small] (babaaba) edge node{$a$} (babaabaa);
        \draw[below,node font=\small] (babaaba) edge node{$b$} (babaabab);
    \end{tikzpicture}
    \caption{The invariant probability measure on the Fibonacci shift ($\lambda =$ the golden ratio). Circled nodes represent elements from the language $L = \{w\in \{a,b\}^* : |w|_a \equiv 0 \bmod 2\}$}\label{f:proba-fibo}
\end{figure}

We next prove that the skew product of the Fibonacci shift with $\Z/2\Z$ for the skewing function determined by $\varphi$ is also uniquely ergodic, as an application of Michel's theorem. The argument is a special case of the general method described in Section~\ref{s:morphic}. 

\begin{proposition}\label{p:fibo}
    The skew product $\Z/2\Z\skewprod _{\varphi} X$ is uniquely ergodic. 
\end{proposition}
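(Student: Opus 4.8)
The plan is to realize $\Z/2\Z\rtimes X$, up to topological conjugacy, as a primitive substitution shift, and then to apply Michel's theorem. By \cref{l:isomskewprod}, the map $\Psi$ conjugates $\Z/2\Z\rtimes X$ with the subshift $Y=\Psi(\Z/2\Z\rtimes X)$ of $(\Z/2\Z\times\{a,b\})^{\Z}$, whose language is exactly the set of \emph{coherent} words $(h_0,c_0)\cdots(h_{n-1},c_{n-1})$ -- meaning $c_0\cdots c_{n-1}\in\cL(X)$ and $h_{i+1}=h_i+\varphi(c_i)$ for all $i$. It therefore suffices to produce a primitive substitution on the four-letter alphabet $\Z/2\Z\times\{a,b\}$ whose shift equals $Y$.

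The natural candidate is the lift of $\sigma$ along the $\varphi$-cocycle, but this is compatible with the cocycle only when $\varphi\circ\sigma=\varphi$, which fails here since $\varphi(\sigma(b))=\varphi(a)=1\ne 0=\varphi(b)$; overcoming this is the main obstacle. It is resolved by passing to a power: a computation on the abelianization gives $\varphi\circ\sigma^3=\varphi$ (indeed $\sigma^3(a)=abaab$ has three $a$'s while $\sigma^3(b)=aba$ has two), and $\rho=\sigma^3$ generates the same shift $X$. Writing $p,q,r,s$ for the letters $(0,a),(1,a),(0,b),(1,b)$ and defining $\tau$ so that $\tau(h,c)$ spells $\rho(c)$ with its $j$-th letter carrying the level $h+\varphi\bigl((\rho(c))_{[0,j)}\bigr)$, one obtains
\[
    p\mapsto psqps,\qquad q\mapsto qrpqr,\qquad r\mapsto psq,\qquad s\mapsto qrp .
\]
A short computation shows that the incidence matrix $M$ of $\tau$ satisfies $M^{2}>0$ entrywise, so $\tau$ is primitive.

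It then remains to check that $X(\tau)=Y$. For the inclusion $X(\tau)\subseteq Y$ one verifies that $\tau$ sends coherent words to coherent words; here the identity $\varphi\circ\rho=\varphi$ enters precisely in controlling the level at the junctions between consecutive blocks $\tau(h_i,c_i)$. Consequently $\cL(X(\tau))\subseteq\cL(Y)$, and in fact $\tau^{\infty}(p)=\Psi(0,x)_{[0,\infty)}$ with $x=\rho^{\infty}(a)$ the fixed point of $\sigma$. For the reverse inclusion, observe that $\tau$ is equivariant for the left $\Z/2\Z$-action on the alphabet, so $X(\tau)$ is $\Z/2\Z$-invariant; moreover it surjects onto $X$ under the projection $(h,c)\mapsto c$, since its image is a subshift containing $x$ and $X$ is minimal. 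As the fiber $\{\Psi(g,y):g\in\Z/2\Z\}$ above each $y\in X$ is a single $\Z/2\Z$-orbit of size two, any nonempty $\Z/2\Z$-invariant subset of it is the entire fiber; hence $X(\tau)$ contains every fiber and $Y\subseteq X(\tau)$. With $X(\tau)=Y$ established, Michel's theorem applied to the primitive substitution $\tau$ shows that $X(\tau)$ is uniquely ergodic, and unique ergodicity passes through the conjugacy $\Psi$ back to $\Z/2\Z\rtimes X$.
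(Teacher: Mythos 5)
Correct, and essentially the paper's own approach: both proofs use $\varphi\circ\sigma^3=\varphi$ to lift $\sigma^3$ to a primitive substitution on the alphabet $\Z/2\Z\times\{a,b\}$, identify the substitution shift with $\Psi(\Z/2\Z\rtimes X)$, and conclude by Michel's theorem, with your equivariance-plus-fiber argument merely spelling out the identification $X(\tau)=Y$ that the paper asserts. Incidentally, your $\tau$ is the correct coherent lift; it agrees with the paper's $\bar\sigma$ except that the paper's displayed image of $a_0$ has its middle two letters transposed, evidently a typo.
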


\begin{proof}
    Take the substitution $\bar\sigma$ defined as follows on the alphabet $\Z/2\Z\times A$, whose letters are denoted $a_0, a_1, b_0, b_1$ for conciseness,
    
    \begin{equation*}
        \begin{array}{lll}
            \bar\sigma\from & a_0\mapsto a_0b_1a_0a_1b_1,\quad & a_1\mapsto a_1b_0a_0a_1b_0, \\
            & b_0 \mapsto a_0b_1a_1,\quad & b_1 \mapsto a_1b_0a_0.
        \end{array}
    \end{equation*}

    This substitution is primitive and satisfies $\pi\circ\bar\sigma=\sigma^3$ where $\pi\from(\Z/2\Z\times A)^*\to A^*$ is the natural projection (mapping $a_i$ to $a$ and $b_i$ to $b$). Moreover, $\varphi\circ\sigma^3 = \varphi$ and $\bar\sigma(\cL(Y))\subseteq\cL(Y)$, where $Y = \Psi(G\skewprod_{\varphi}  X)$ is the skew product viewed as a shift space on $\Z/2\Z\times A$ via the map $\Psi$ of Lemma~\ref{l:isomskewprod}. It follows that $Y$ is the shift space generated by $\bar\sigma$. Therefore, by Michel's theorem, $Y$ is uniquely ergodic, hence so is $G\skewprod_{\varphi}  X$.
\end{proof}

As an immediate consequence of Corollary~\ref{c:equidistribution} (see also Example~\ref{ex:counting1}), we conclude that the density $\delta_\mu(L)$, 
    where $L = \{ w\in \{a,b\}^*: |w|_a \equiv 0 \bmod 2\}$,
exists and is $1/2$. In other words, the sequence $(\mu([L\cap A^n]))_{n\in\N}$ converges to $1/2$ in Ces\`{a}ro's sense, even though, as we next show, the sequence itself does not converge (this is in contrast with Example~\ref{ex:full} where $\nu\times \mu$ is  mixing). The rest of the section is devoted to the proof of the following result where $F = (F(n))_{n\in\N}$ is the Fibonacci number sequence (starting with $F(0)=0$, $F(1)=1$). 
\begin{proposition}
    \label{p:lim-fibo}
    Let $L = \{ w\in \{a,b\}^*: |w|_a \equiv 0 \bmod 2\}$. 
    Then the sequence of measures ${(\mu([L\cap A^n]))_{n\in\N}}$ does not have a limit, as 
    \begin{equation*}
        \lim_{n\to\infty} \mu([L\cap A^{F(4n)}]) = 1,\quad \lim_{n\to\infty} \mu([L\cap A^{F(4n+2)}]) = 0.
    \end{equation*}
\end{proposition}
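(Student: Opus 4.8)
The plan is to track the quantity $\mu(L\cap A^n)$ directly through the substitutive structure of the Fibonacci shift, exploiting the self-similarity of $\sigma$ and the fact that $|\sigma(a)|_a$ and $|\sigma(b)|_a$ behave predictably modulo $2$. First I would recall that for each $n$, $\mu(L\cap A^n)$ equals the sum of $\mu(w)$ over those $w\in\cL(X)\cap A^n$ with $|w|_a$ even; equivalently, writing $p_n = \mu(L\cap A^n)$ and $q_n = \mu(L^c\cap A^n) = 1 - p_n$, it suffices to understand the difference $d_n := p_n - q_n = \sum_{w\in\cL(X)\cap A^n}(-1)^{|w|_a}\mu(w)$. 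This is exactly an ergodic sum for the skew product: $d_n = \int_X (-1)^{|x_{[0,n)}|_a}\,d\mu = \int_X \chi(\varphi^{(n)}(x))\,d\mu$, where $\chi$ is the nontrivial character of $\Z/2\Z$. So the statement $\lim_n p_{F(4n)} = 1$, $\lim_n p_{F(4n+2)} = 0$ is equivalent to $\lim_n d_{F(4n)} = 1$ and $\lim_n d_{F(4n+2)} = -1$.

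The key step is to exploit the recursive structure of prefixes of the Fibonacci word under $\sigma$. Let $\omega = \lim_k \sigma^k(a)$ be the fixed point; since $X$ is minimal and uniquely ergodic, $\mu$-almost every $x$ has the same asymptotic frequency behaviour as $\omega$, and more importantly, one can compute $\mu(w)$ for the prefixes $w = \omega_{[0,F(k))}$, which have length equal to a Fibonacci number. The crucial combinatorial fact is that $\sigma^k(a)$ has length $F(k+2)$ and contains exactly $F(k+1)$ occurrences of $a$ (the Fibonacci recursion for letter counts), so $|\sigma^k(a)|_a = F(k+1)$. The parity of $F(k+1)$ is periodic in $k$ with period $3$: $F$ is even exactly at indices divisible by $3$. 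I would then argue that for lengths $n = F(k)$, the ergodic average $d_n$ is dominated by a single "block" contribution: because the Fibonacci word is a concatenation of images $\sigma^{k-1}(a)$ and $\sigma^{k-2}(a)$ arranged according to $\omega$ itself, and a generic window of length $F(k)$ in $x$ sees essentially one such block (up to lower-order boundary corrections controlled by the bounded recurrence/linear complexity), $d_{F(k)}$ converges to $(-1)^{|\sigma^{k-2}(a)|_a} = (-1)^{F(k-1)}$ as the boundary terms vanish. Plugging in $k = 4n$ gives parity $F(4n-1)$, and $k = 4n+2$ gives $F(4n+1)$; reducing indices mod $3$ (noting $4n-1 \equiv -1 \equiv 2$, $4n+1\equiv 1$ when $3\nmid n$, and handling the cases $3\mid n$ separately — they give the same parity outcome after a short check) yields $d_{F(4n)}\to 1$ and $d_{F(4n+2)}\to -1$, hence the claim.

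A cleaner route to the same end, which I would actually prefer to write up, is to use the substitution $\bar\sigma$ on $\Z/2\Z\times A$ from the proof of \cref{p:fibo} directly: a prefix of the fixed point of $\bar\sigma$ projects under $\Psi$ to a prefix of $\omega$ together with the running value of $\varphi^{(n)}$, so the measure $\mu(L\cap A^n)$ is the total $(\nu\times\mu)$-mass of those cylinders in $Y$ of length $n$ whose first coordinate sums to $0$. Because $Y$ is itself a primitive substitution shift, its cylinder measures of Fibonacci length are governed by the incidence matrix of $\bar\sigma$, and one computes the relevant entries explicitly. The main obstacle is making the "one dominant block" heuristic rigorous: I must bound the contribution of windows that straddle two consecutive $\sigma^{k-1}$-blocks and show this contribution is $o(1)$ as $k\to\infty$. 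For the Fibonacci shift this follows from linear factor complexity (at most $n+1$ factors of length $n$) and the explicit bound on return times to the prefix $\omega_{[0,F(k))}$, which are themselves of the order $F(k)$; so the fraction of "bad" positions is $O(1/F(k))\to 0$. Once that estimate is in place, everything reduces to the periodicity mod $3$ of the parity of Fibonacci numbers, which is immediate.
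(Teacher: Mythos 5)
There is a genuine gap at the heart of your argument, namely the claim that windows straddling block boundaries contribute $o(1)$. With the decomposition of $x$ into blocks $\sigma^{k-1}(a)$ and $\sigma^{k-1}(b)=\sigma^{k-2}(a)$, of lengths $F(k+1)$ and $F(k)$, a window of length $F(k)$ avoids all block boundaries only if it starts within distance $F(k-1)$ of the start of an $a$-block (or exactly at the start of a $b$-block); the proportion of such starting positions is about $\lambda^{-2}$, so a fraction of positions bounded away from zero -- not $O(1/F(k))$ -- gives straddling windows, and neither linear complexity nor return-time bounds change this. What actually makes the conclusion true is that almost every factor of length $F(k)$, straddling or not, has exactly $F(k-1)$ occurrences of $a$: all factors of length $F(k)$ except one singular word are conjugates of $\sigma^{k-2}(a)$ (equivalently, by the balance property of Sturmian words the counts take only two consecutive values, and the mean count is $F(k)/\lambda=F(k-1)-(-1)^k\lambda^{-k}$, so the minority count has measure at most $\lambda^{-k}$). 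The paper reaches the same exceptional-word structure via special factors and the lexicographic order (\cref{l:fibo-special,l:fibo-coincidence} and the Perrin--Restivo results). So your intermediate formula $d_{F(k)}\to(-1)^{F(k-1)}$ is correct, but not for the reason you give.

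Your final bookkeeping is also wrong, and repairing it exposes a problem with the statement itself. Since $4n-1\equiv n-1$ and $4n+1\equiv n+1 \pmod 3$, and $F(m)$ is even exactly when $3\mid m$, the parities of $F(4n-1)$ and $F(4n+1)$ genuinely depend on $n\bmod 3$; they are not constant, and the cases $3\mid n$ do not ``give the same outcome.'' Carried out correctly, your own formula yields $\mu(L\cap A^{F(k)})\to1$ precisely along $k\equiv1\pmod3$ and $\to0$ along $k\not\equiv1\pmod3$; for instance $\mu(L\cap A^{F(8)})=\mu(L\cap A^{21})\approx0.02$ although $F(8)=F(4\cdot2)$, so the subsequence $(\mu(L\cap A^{F(4n)}))_n$ does not tend to $1$. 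The non-existence of $\lim_n\mu(L\cap A^n)$ survives, but with the dichotomy governed by the Fibonacci index modulo $3$ (e.g.\ along $F(6n+4)$ versus $F(6n)$), not modulo $4$. The same slip occurs in the paper's proof: the parity claim following \cref{l:fibo-coincidence} (``$\varphi(u_n)=\varphi(v_n)=0$ if $k$ is odd, $1$ if $k$ is even'') already fails at $k=1$, where $w_1=ab$ and $\varphi(w_1)=1$; indeed $|w_k|_a=F(2k+1)-1$, whose parity is periodic in $k$ with period $3$, not $2$.
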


The proof relies on a number of key facts about the structure of the language of the Fibonacci shift. An important tool is the notion of \emph{special word}. 
Recall that, for a general shift space $X\subseteq A^\Z$, a word $w\in \cL(X)$ is called \emph{left special} if $aw, bw\in \cL(X)$ for some $a,b\in A$, $a\neq b$; it is called \emph{right special} if instead $wa, wb\in\cL(X)$ for some $a,b\in A$, $a\neq b$; and it is called \emph{bispecial} if it is both left and right special. 
Since it is Sturmian, the Fibonacci shift has the property that for every $n\in\N$, $\cL(X)\cap A^n$ contains \emph{exactly} one left and one right special factor (which might or might not coincide). We next establish two lemmas which are central in the proof of Proposition~\ref{p:lim-fibo}. 
\begin{lemma}\label{l:fibo-special}
    Let $u$ be a left special factor and $v$ be a right special factor in the Fibonacci shift $X$.
    \begin{enumerate}
        \item The word $u' = \sigma^2(ub)$ is also left special.
            \label{i:fibo-left-special}
        \item The word $v' = \sigma^2(va)$ is also right special.
            \label{i:fibo-right-special}
    \end{enumerate}
\end{lemma}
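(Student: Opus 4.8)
The plan is to reduce the statement to the classical description of the special factors of a Sturmian shift. Since the Fibonacci shift $X$ is Sturmian, for every $n$ there is exactly one left special and exactly one right special factor of length $n$; the left special factors are precisely the prefixes of the Fibonacci word $x=\sigma(x)$ (the characteristic word of $X$), and, because the language of a Sturmian shift is closed under reversal, the right special factors are precisely the reversals $\widetilde{w}$ of the prefixes $w$ of $x$. The only other facts I will use are that $\sigma^2(x)=x$ and that $x$ has no factor $bb$ (so every $b$ occurring in $x$ is immediately followed by an $a$); recall also that $\sigma^2(a)=aba$ and $\sigma^2(b)=ab$.

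For (i), let $u$ be left special, hence a prefix of $x$, and write $x=u\,t$ with $t$ the infinite suffix of $x$ following $u$. Applying $\sigma^2$ and using $\sigma^2(x)=x$ gives $x=\sigma^2(u)\,\sigma^2(t)$, where $\sigma^2(t)=\sigma^2(x_{|u|})\,\sigma^2(x_{|u|+1})\cdots$ begins with $\sigma^2(x_{|u|})\in\{aba,ab\}$, and hence with $ab$. Therefore $\sigma^2(u)\cdot ab$ is a prefix of $x$; since $\sigma^2(u)\cdot ab=\sigma^2(u)\,\sigma^2(b)=\sigma^2(ub)=u'$, the word $u'$ is a prefix of $x$, that is, left special.

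For (ii), introduce the mirror substitution $\bar\sigma\colon a\mapsto ba,\ b\mapsto a$. A check on the two letters $a,b$ gives the identities
\begin{equation*}
    \widetilde{\sigma^2(w)}=\bar\sigma^2(\widetilde{w})
    \qquad\text{and}\qquad
    aba\cdot\bar\sigma^2(w)=\sigma^2(w)\cdot aba,
\end{equation*}
valid for every word $w$ over $\{a,b\}$ (the first because $\widetilde{\sigma(c)}=\bar\sigma(c)$ for each letter $c$, the second because $aba\,\bar\sigma^2(c)=\sigma^2(c)\,aba$ for $c\in\{a,b\}$; both propagate to arbitrary $w$ by concatenation). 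Now let $v$ be right special, so $\widetilde{v}$ is a prefix of $x$. Using $\sigma^2(va)=\sigma^2(v)\,\sigma^2(a)$ and $\widetilde{\sigma^2(a)}=\widetilde{aba}=aba$, we compute
\begin{equation*}
    \widetilde{\sigma^2(va)}=\widetilde{\sigma^2(a)}\,\widetilde{\sigma^2(v)}=aba\cdot\bar\sigma^2(\widetilde{v})=\sigma^2(\widetilde{v})\cdot aba .
\end{equation*}
Now $\sigma^2(\widetilde{v})$ is a prefix of $\sigma^2(x)=x$, and, as in (i), the infinite suffix of $x$ following $\sigma^2(\widetilde{v})$ is $\sigma^2$ applied to the infinite suffix of $x$ following $\widetilde{v}$; it therefore begins with $\sigma^2(x_{|\widetilde{v}|})$, hence (using $\sigma^2(a)=aba$, $\sigma^2(b)=ab$, and that $b$ is followed by $a$ in $x$) with $aba$. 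Consequently $\sigma^2(\widetilde{v})\cdot aba=\widetilde{\sigma^2(va)}$ is a prefix of $x$, and so $v'=\sigma^2(va)$ is right special.

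The step needing care is (ii): reversal does not commute with $\sigma$, so one cannot simply mirror the proof of (i). The device is to pass to $\bar\sigma$ and invoke the conjugacy identity $aba\cdot\bar\sigma^2(w)=\sigma^2(w)\cdot aba$, which rewrites the a priori unwieldy word $aba\cdot\bar\sigma^2(\widetilde{v})$ as $\sigma^2(\widetilde{v})\cdot aba$; the latter is visibly a prefix of $x$ because $x$ is a fixed point of $\sigma^2$, whereas $x$ is \emph{not} a fixed point of $\bar\sigma^2$. Everything else reduces to one-line checks on the letters $a,b$, the equality $\sigma^2(x)=x$, and the absence of $bb$ in $x$.
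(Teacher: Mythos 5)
Your proof is correct, but it follows a genuinely different route from the paper. The paper argues directly in the language: since $u$ is left special, $au,bu\in\cL(X)$; extend each by one letter on the right, apply $\sigma^2$, and read off $au'$ and $bu'$ as factors of $\sigma^2(auc)$ and $\sigma^2(bud)$ (and symmetrically $v'a$, $v'b$ inside $\sigma^2(vac)$ and $\sigma^2(vba)$), using only $\sigma^2(a)=aba$, $\sigma^2(b)=ab$ and the absence of $bb$. You instead import the classical Sturmian structure theory -- left special factors are exactly the prefixes of the characteristic fixed point $x$, and right special factors are their reversals by reversal-closure of $\cL(X)$ -- and then handle the right special case via the mirror substitution $\bar\sigma\from a\mapsto ba,\ b\mapsto a$, using the two identities $\widetilde{\sigma^2(w)}=\bar\sigma^2(\widetilde{w})$ and $aba\,\bar\sigma^2(w)=\sigma^2(w)\,aba$, both of which check out on letters and propagate correctly; the final step that the tail of $x$ after $\sigma^2(\widetilde{v})$ starts with $aba$ (splitting on whether the next letter is $a$ or $b$, the latter forced to be followed by $a$) is also sound. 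What the paper's argument buys is self-containedness: it needs no appeal to the characteristic-word description of special factors or to closure under reversal, only one-letter right-extendability of factors. What your argument buys is a more structural viewpoint (the prefix characterization and the conjugacy between $\sigma^2$ and $\bar\sigma^2$), which in fact sits close to the explicit prefix constructions the paper carries out next in \cref{l:fibo-coincidence}; the price is reliance on quoted Sturmian facts, which you should cite precisely (e.g. the standard characterization of left special factors of Sturmian words and reversal-closure) since the paper's own proof does not need them.
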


\begin{proof}
    \ref{i:fibo-left-special}. As $u$ is left special, then $au$ and $bu$ are in $\cL(X)$. Let $c$ and $d$ be right extensions of respectively $au$ and $bu$; let $c'$ and $d'$ be such that $\sigma^2(c) = abc'$ and $\sigma^2(d) = abd'$ ($c'$~and $d'$ are either $a$ or $\varepsilon$). Then the following words also belong to $\cL(X)$:
    
    \begin{align*}
        \sigma^2(auc) &= aba\sigma^2(u)abc' = abau'c',\\
        \sigma^2(bud) &= ab\sigma^2(u)abd' = abu'd'.
    \end{align*}
    
    In particular, $au'$ and $bu'$ belong to $\cL(X)$. 
    
    \ref{i:fibo-right-special}. The proof of the second part follows similar lines. Let $c$ and $d$ be right extensions of respectively $va$, $vb$ (in fact $d=a$ since $bb$ does not occur in $X$); let $c'$ be such that $\sigma^2(c) = ac'$. Then we find that the following words belong to $\cL(X)$:
    \begin{equation*}
        \sigma^2(vac) = v'ac',\quad \sigma^2(vba) = \sigma^2(v)ababa = v'ba. 
    \end{equation*}
    Hence both $v'a$, $v'b\in \cL(X)$.
\end{proof}

\begin{lemma}\label{l:fibo-coincidence}
        Let $u_n$ and $v_n$ denote respectively the left and right special factors of length $n$ in the Fibonacci shift $X$. Then whenever $n = F(2k+2)-1$  for some $k\geq 0$, the equality $bu_n = v_nb$ holds, and moreover
    \begin{equation*}
        \varphi(u_n) = \varphi(v_n) \equiv k \mod 2,
    \end{equation*}
    where $\varphi\from \{a,b\}^*\to\Z/2\Z$ is the morphism defined by $\varphi(a)=1$ and $\varphi(b)=0$.
\end{lemma}

\begin{proof}
    Let us first prove that $bu_n = v_nb$ when $n=F(2k+2)-1$, $k\geq 0$. 
    Consider the following recursively defined sequence of words:
    \begin{equation*}
        w_0 = \varepsilon,\quad w_{k+1} = \sigma^2(w_kb).
    \end{equation*}
    In other words, this is the sequence of words starting with:
    \begin{equation*}
       \varepsilon,\quad \sigma^2(b),\quad \sigma^4(b)\sigma^2(b),\quad \sigma^6(b)\sigma^4(b)\sigma^2(b),\quad\ldots
    \end{equation*}

    It is clear from Lemma~\ref{l:fibo-special} that this is a sequence of left special factors of $X$. We claim that $|w_k| = F(2k+2) - 1$ for every $k\geq 0$. Indeed, observe that, for every $i\in\N$, $|\sigma^{2i}(b)| = F(2i+1)$ (a fact easily established by induction) and thus,
    \begin{equation*}
        1+|w_k| = 1+\sum_{i=1}^k|\sigma^{2i}(b)| = \sum_{i=0}^kF(2i+1) = F(2k+2).
    \end{equation*}

    Thus we are reduced to show that $bw_k = v_nb$ where $n = F(2k+2)-1$; or in other words that removing the last letter from $bw_k$ yields a right special factor. We do so by induction on $k$. The basis $k=0$ is trivial since $w_0 = \varepsilon$. Assume that the equality $bw_k = v_nb$ holds for some $k\geq 0$. Then we have:
    \begin{equation*}
        abw_{k+1}a = ab\sigma^2(w_kb)a = \sigma^2(bw_kb)a = \sigma^2(v_nbb)a = \sigma^2(v_na)ba.
    \end{equation*}
    By Lemma~\ref{l:fibo-special}, the word $\sigma^2(v_na)$ is right special, and thus so is the word $a^{-1}\sigma^2(v_na)$ obtained by removing its leading letter $a$. Finally we observe that $a^{-1}\sigma^2(v_na)b = bw_{k+1}$, and thus $bw_{k+1}=v_{m}b$, where $m=F(2k+4)-1$.

    Next, let us fix $k\geq 0$ and let $n=F(2k+2)-1$. Then by the first part of the proof, we know that $w_k = u_n$, $bu_n=v_nb$, and then 
    \[
        \varphi(u_n) = \varphi(bu_n) = \varphi(v_nb) = \varphi(v_n),
    \] 
    We need to show that $\varphi(w_k) \equiv k \bmod 2$. We do so by induction on $k$. Since $w_0 = \varepsilon$ and $w_1 = \sigma^2(b)=ab$, the result is clear for $k=0,1$. Assume then that the result holds for some $k\geq 0$. By definition, $w_{k+2} = \sigma^4(w_{k}b)\sigma^2(b) =\sigma^4(w_kb)ab$. But observe that 
    \[
        \sigma^4(a) = abaababa \quad\text{and}\quad \sigma^4(b)=abaab.
    \]
    Thus we have the relation $\varphi(\sigma^4(x)) \equiv |x| \bmod 2$ for all $x\in A^*$. By induction, we then have 
    \[
        \varphi(w_{k+2}) = \varphi(\sigma^4(w_kb)ab) = \varphi(\sigma^4(w_kb))+\varphi(ab) \equiv |w_kb| +1 \equiv |w_k|+2 \equiv k+2\bmod 2,
    \]
    which concludes the proof.
    
\end{proof}

\begin{proof}[Proof of Proposition~\ref{p:lim-fibo}]
    Recall that $L = \{w\in\{a,b\}^* : |w|_a\equiv 0 \bmod 2\} = \varphi^{-1}(0)$ where $\varphi\from A^*\to\Z/2\Z$ is the morphism defined by $\varphi(a)=1$, $­\varphi(b)=0$.
    Let $\leq_{\lex}$ denote the lexicographic order on words. It follows immediately from the description of $\leq_\lex$ on $\cL(X)\cap A^n$ by Perrin and Restivo~\cite[Theorem~2]{Perrin2012}, that for every $w\in \cL(X)$ with $|w|=n+1$,
    \begin{equation*}
        \varphi(w) = 
        \begin{cases}
            \varphi(v_{n}a) & \text{if}\ w\leq_\lex v_{n}a,\\
            \varphi(v_{n}b) & \text{if}\ w\geq_\lex v_{n}b.
        \end{cases}
    \end{equation*}

    Moreover \cite[Proposition~2]{Perrin2012} states that the lexicographically maximal element of $\cL(X)\cap A^{n+1}$ is $bu_{n}$. In particular, if $bu_n = v_nb$, then $v_nb$ is maximal in $\cL(X)\cap A^{n+1}$, and for such values of $n$,
    \[
        [L\cap A^{n+1}] = 
        \begin{cases}
            [v_nb] & \text{if }\varphi(v_nb)=0, \\
            X\setminus[v_nb] & \text{if }\varphi(v_nb)=1.
        \end{cases}
    \]
    But by Lemma~\ref{l:fibo-coincidence}, we know that $\varphi(v_nb) = \varphi(v_n)\equiv k \bmod 2$, where $n = F(2k+2)-1$, and 
    \begin{equation*}
        \mu([L\cap A^{n+1}]) = 
        \begin{cases}
            \mu([v_nb]) & \text{if $k$ is even,}\\
            1-\mu([v_nb]) & \text{if $k$ is odd.}
        \end{cases}
    \end{equation*}
    Finally, $k$ is even precisely when $n+1 = F(4\ell+2)$ for some $\ell$, and $k$ is odd precisely when $n+1 = F(4\ell)$ for some $\ell$. 
    We conclude the proof by observing that $\lim_{n\to\infty}\mu([v_n]) = 0$, which is a straightforward consequence of \cite[Proposition~13]{Durand2000}.
\end{proof}

\section{The case of Bernoulli measures}
\label{sec:Bernoulli}

This section gives a brief account of the original approach to densities under Bernoulli measures due to Schützenberger~\cite{Schutzenberger1965}, Berstel~\cite{Berstel1972}, and Hansel and Perrin~\cite{Hansel1983} with an approach based on the  algebraic theory of formal languages. Details may be found in the monograph~\cite{BerstelPerrinReutenauer2009}.

A probability measure $\mu$ on the full shift  $A^\Z$ is a \emph{Bernoulli measure} when the map $A^*\to [0,1]$ induced by evaluating $\mu$ on cylinders is a morphism. More explicitly,
\begin{displaymath}
    \mu([uv]) = \mu([u])\mu([v]) \quad \text{and}\quad\sum_{a\in A}\mu(a)=1.
\end{displaymath}
A Bernoulli measure is called \emph{positive} if $\mu([a])>0$ for all $a\in A$. The simplest example of a positive Bernoulli measure is that of the uniform distribution given by $\mu([a])=1/|A|$.

A first result on densities under Bernoulli measures is the following, which generalizes the simple observation that finite languages have zero density. We say that a language $L\subseteq A^*$ is \emph{thin} if there is a word $w\in A^*$ which does not appear in any word of $L$ as a factor. 
\begin{proposition}[{\cite[Proposition~13.2.3]{BerstelPerrinReutenauer2009}}]\label{propositionThin}
    Let $\mu$ be a Bernoulli measure on $A^\Z$ and $L\subseteq A^*$. If $L$ is thin, then one has $\delta_\mu(L)=0$. 
\end{proposition}

Recall that a \emph{rational language} is a language recognized by a finite automaton, which may without loss of generality be assumed to be complete.
Formally this means that there is a triple $\A = (Q,i,F)$ where $Q$ is a finite set of states on which $A^*$ acts on the right, $i\in Q$ is the initial state, and $F\subseteq Q$ is the set of final states, and such that $L = \{w\in A^*\mid i\cdot w\in F\}$. 
\begin{example}\label{eg:rat-1}
    The language $L=\{aa,ab,b\}^*$ is rational. An automaton recognizing $L$ is shown in Figure~\ref{figureAutomaton}; it is in fact the \emph{minimal} automaton of $L$.

    \begin{figure}[hbt]
        \centering
        \tikzset{node/.style={circle,draw,minimum size=.4cm,inner sep=0pt}}
        \tikzstyle{loop above}=[out=115,in=65,loop,looseness=8]
        \begin{tikzpicture}[scale=1.5]
            \node[node] (1) at (0,0) {$1$};
            \node[node] (2) at (1,0) {$2$};

            \draw[<->] (1) to ++(-.3,0);
            \draw[above,->,loop above] (1) edge node {$b$} (1);
            \draw[above,->,bend left] (1) to node {$a$} (2) ;
            \draw[below,->,bend left] (2) to node {$a,b$} (1);
        \end{tikzpicture}
        \caption{An automaton recognizing the language $L=\{aa,ab,b\}^*$ with  the state $1$ being  the initial state and the only final  state}\label{figureAutomaton}
    \end{figure}
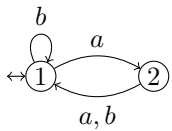
\end{example}

For rational languages, we have the following converse of Proposition~\ref{propositionThin}. We provide a sketch of proof using notions from the algebraic theory of formal languages which may be recalled in~\cite{BerstelPerrinReutenauer2009}.
\begin{proposition}
    Let $\mu$ be a positive Bernoulli measure on $A^\Z$ and $L\subseteq A^*$ be a rational language. If $\delta_\mu(L)=0$, then $L$ is thin.
\end{proposition}

\begin{proof}
    Let $\varphi\from A^*\to M$ be a morphism onto a finite monoid $M$ such that $L = \varphi^{-1}(\varphi(L))$, and let $K$ be the minimal ideal of $M$. For $q\in M$, we denote by $L_q$ the language $\varphi^{-1}(q)$. Observe that every element $m\in K$ satisfies $MmM = K$, and in particular, $L_q$ is thin whenever $q\in M\setminus K$. As a result we deduce that $\delta_\mu(L)=\delta_\mu(L \cap L_K)$ and $\delta_\mu(L_K)=1$, where $L_K = \varphi^{-1}(K)$. 

    We claim that $\delta_\mu(L_m)>0$ for all $m\in K$. First, observe that there must be at least one $n\in K$ with $\delta_\mu(L_n)>0$. Next, fix $m\in K$ and let $r,s\in M$ such that $rns=m$; let $u,v\in A^*$ such that $\varphi(u)=r$ and $\varphi(v)=s$. It follows that $uL_nv \subseteq L_m$. Then the fact that $\mu$ is a Bernoulli measure implies that
    \begin{equation*}
        \delta_\mu(uL_nv) = \mu([u])\mu([v])\delta_\mu(L_n).
    \end{equation*}
    Since $\mu$ is positive we get $\mu([L_m]) \geq \mu([u])\mu([v])\delta_\mu(L_n) > 0$. It now follows that if $\delta_\mu(L) = 0$, then $L$ must be a union of $L_q$ with $q\in M\setminus K$. This concludes the proof since finite unions of thin codes are thin, cf.~\cite[Proposition~2.5.8]{BerstelPerrinReutenauer2009}. 
\end{proof}

Note however that the previous proposition fails for non-rational languages: take for instance 
\begin{displaymath}
    L=\{a^nw\mid n\ge 0,w\in\{a,b\}^n\}.
\end{displaymath}
Then one has $\mu([L\cap A^{2n}])=1/\mu([a])^n$, which implies $\delta_\mu(L)=0$. However $L$ is clearly not thin, since every word appears as a suffix.

Next we give a proof of the fact that the density of a rational language under a Bernoulli measure always exists.

\begin{proposition} \label{prop:rational}
    Let $\mu$ be a Bernoulli measure on $A^\Z$ and $L\subseteq A^*$ be a rational language. Then the density $\delta_\mu(L)$ exists and if $\mu([A])\subseteq \Q$, it is a rational number.
\end{proposition}

\begin{proof}
    Let $\A=(Q,i,F)$ be a complete deterministic automaton recognizing $L$. For every terminal state $t\in F$, let $L_t$ be the language recognized by $(Q,i,t)$. Since $L=\cup_{t\in F}L_t$, it is enough to prove that every $L_t$ has a density. Thus we may assume that $F=\{t\}$. 
    Let $M$ be the $Q\times Q$-matrix defined by
    \begin{displaymath}
        M_{p,q}=\mu(\{x\in A^\Z\mid p\cdot x_0=q\}).
    \end{displaymath}
    Observe that $M$ is a stochastic matrix, and since the sets $\{x\in A^\Z\mid p\cdot x_0=q\}$ are disjoint unions of cylinders $[a]$ for $a\in A$, it has rational entries under our hypothesis.
    Moreover this matrix has the property that
    \begin{displaymath}
    \mu([L\cap A^n])=M_{i,t}^n.
    \end{displaymath}

    Without loss of generality we may assume that every $q\in Q$ is on a path from $i$ to $t$ (otherwise $q$ can be removed). Let $U$ be the set of labels of paths $i\to t$ and which pass by $t$ exactly once; let $V$ be the set of labels of simple loops $t\to t$. Then $L = UV^*$, $0\leq\mu([U])<\infty$, and $\delta_\mu(UV^*) = \mu([U])\delta_\mu(V^*)$ exists whenever $\delta_\mu(V^*)$ exists~\cite[Proposition~13.2.5]{BerstelPerrinReutenauer2009}. Hence we may assume moving forward that there is a path from $t$ to $i$. In particular, it means that the matrix $M$ is \emph{irreducible}, i.e.  for all $p,q$ in $Q$, there exists an  integer $k$ such that  $M_{p,q}^k >0$, or else
 $(M+I)^{|Q|-1}>0$ where $I$ is the $Q\times Q$ identity matrix. 
     By the Perron--Frobenius Theorem, the numbers $M_{i,t}^n$ converge in average and thus the density of $L$ exists. 
     Moreover, the matrices $M^n$ converge as $n\to\infty$ to a matrix with all rows equal to a stochastic eigenvector $v$ of $M$ of eigenvalue 1. 
     But since $M$ has rational entries, the eigenspace of eigenvalue 1 (which has dimension one by the Perron--Frobenius Theorem) has a spanning vector $u$ with only rational entries. 
     Therefore the aforementioned vector $v$ is equal to $\pm u/\Vert u\Vert_1$, where $\Vert u\Vert_1$ denotes the 1-norm of $u$, and thus also has rational entries.
\end{proof}

\begin{example}
    Let $L$ and $\A$ be the language and automaton from Example~\ref{eg:rat-1}. Set $\mu([a])=p$ and $\mu([b])=1-p$. The matrix $M$, which is irreducible, is
    \begin{equation*}
        M=\begin{bmatrix}1-p&p\\1&0\end{bmatrix}.
    \end{equation*}
    The normalized left eigenvector is 
    \begin{equation*}
        v=\begin{bmatrix}1/(1+p)&p/(1+p)\end{bmatrix}.
    \end{equation*}
    Thus $\delta_\mu(L)=1/(1+p)$  (by considering here $M_{1,1}$ since $i=t=1$).
\end{example}

\begin{remark}
    The proof of the above theorem uses a Markov chain associated to an automaton. It can also be understood as a skew product $Q\skewprod X$ of the set $Q$ with the shift $X=A^\Z$, in the same way as we did with groups. Indeed, if $\A=(Q,i,F)$ is a deterministic automaton, then the set $Q\times X$ may be turned into   a dynamical system using the transformation
    \begin{displaymath}
        T(q,x)=(q\cdot x_0,Sx).
    \end{displaymath}

    Let $\rho$ be the unique probability measure on $Q$ such that
    \begin{displaymath}
        \rho(q)=\sum_{p\cdot a=q}\rho(p)\mu([a]).
    \end{displaymath}
    Then $\rho\times\mu$ is an invariant probability measure on the skew product $Q\skewprod X$, which is ergodic whenever the matrix of the automaton is irreducible. Moreover, it is mixing as soon as $\A$ is aperiodic, that is when the $\gcd$ of the lengths of cycles in $\A$ is $1$; or equivalently the matrix $M$ is primitive. In this case we recover the conclusion of Remark~\ref{r:strong-convergence}. Also,  by ergodicity of Bernoulli measures,  the existence of the density in Proposition~\ref{prop:rational} is a special case of Theorem~\ref{t:first-main}.
\end{remark}

We end with a discussion on prefix codes, a notion which also appears later in Section~\ref{s:bifix}. A \emph{prefix code} is a subset $U\subseteq A^*$ where no word is a strict prefix of another. Any prefix code must satisfy $\mu([U])\leq 1$ for any Bernoulli measure $\mu$, cf.~\cite[Proposition~3.7.1]{BerstelPerrinReutenauer2009}. 

If $U$ is a prefix code such that $\mu([U])=1$, then its \emph{average length} (relatively to $\mu$)  is defined as $$\ell(U)=\sum_{p\in U}|p|\,\mu([p]).$$ It is well-known that the  average length satisfies
\begin{equation}\label{eqAverage}
    \ell(U)=\mu([P]),
\end{equation}
where $P$ is the set of proper prefixes of the words in $U$~\cite[Proposition~3.7.11]{BerstelPerrinReutenauer2009}. The following is closely related to our second main result, Theorem~\ref{t:second-main}. Its proof is based on relations between associated generating functions.
 
\begin{proposition}[{\cite[Theorem 13.2.11]{BerstelPerrinReutenauer2009}}]
     Let $\mu$ be a positive Bernoulli measure on $A^{\Z}$.    
     Let $U$ be a prefix code such that $\mu([U])=1$ and $\ell(U)<\infty$. Then $\delta_\mu(U^*) = 1/\ell(U)$.
\end{proposition}

Note that a \emph{rational} prefix code satisfying $\mu([U])=1$ must satisfy the assumption $\ell(U)<\infty$. Indeed, any rational code is thin by \cite[Proposition~2.5.20]{BerstelPerrinReutenauer2009}, hence the set $P$ of proper prefixes of $U$ is also thin, which implies $\mu([P])<\infty$ by \cite[Proposition~2.5.12]{BerstelPerrinReutenauer2009}. 

Let $\varphi\colon A^*\to G$ be a morphism from $A^*$ onto a finite group $G$. If $H$ is a subgroup of $G$, then the submonoid $M=\varphi^{-1}(H)$ is generated by the prefix code $U$ consisting of the nonempty words in $M$ with no non-trivial prefix in $M$; we say that $U$ is a \emph{group code}. Observe in fact that $U$ is also equal to the nonempty words in $M$ with no non-trivial suffix in $M$, and as a result has the dual property of being a \emph{suffix code} (no element of $U$ is suffix of another). Note that sets which are both prefix and suffix codes are called \emph{bifix codes}; bifix codes will be at the heart of Section~\ref{ss:codes}. 
\begin{proposition}\label{prop:bernoulli}
    Let $\mu$ be a positive Bernoulli measure on $A^{\Z}$. Let $\varphi\colon A^*\to G$ be a morphism from $A^*$ onto a finite group $G$,  and let  $H$ be a subgroup of $G$. Let $U$ be the group code defined by $\varphi\colon A^*\to G$ and $U^*=\varphi^{-1}(H)$. Then $\ell(U)=d$ and $\delta_\mu(U^*)=1/d$ with $d=[G:H]=|H|/|G|$.
\end{proposition}
\begin{proof}
The proof that $\ell(U)=d$ uses the fact that the set $P$ of proper prefixes of $U$ is a disjoint union of $d$ suffix codes $U_i$ such that $\mu([U_i])=1$. Then, using \eqref{eqAverage}, we obtain $\ell(U)=\sum_{i=1}^{d}\mu([U_i])=d$.
For further details, see~\cite[Corollary~6.13.16, Theorem~13.2.9]{BerstelPerrinReutenauer2009}, noting that the code $U$ is a maximal bifix code~\cite[p.~64, 65]{BerstelPerrinReutenauer2009} which is moreover rational, and thus also thin~\cite[Proposition~2.5.20]{BerstelPerrinReutenauer2009}.
\end{proof}
\begin{example}
    Let $A=\{a,b\}$. Set $p=\mu([a])$, $q=\mu([b])$. Let $\varphi\colon A^*\to \Z/2\Z$ be defined by $\varphi(a)=0$, $\varphi(b)=1$. Then $\varphi^{-1}(1)=U^*$ with $U=\{a\}\cup ba^*b$.  The set $P$ of proper prefixes of $U$ is $P=\{\varepsilon\}\cup ba^*$, and we have $\ell(U)=\mu([\{\varepsilon\}\cup ba^*])=1+q/(1-p)=2$.
\end{example}

\section{Densities in shifts of finite type}\label{s:SFT}

The aim of this section is to apply our main density formula (Theorem~\ref{t:first-main}) within the setting of \emph{shifts of finite type}. We provide a simple condition which guarantees topological transitivity of the skew product with a finite group; this in turn implies ergodicity for the product of the uniform probability measure on the group and a Markov measure on the shift. This also includes the case of Bernoulli measures  which we already discussed in the previous Section~\ref{sec:Bernoulli}.

The question of ergodicity for skew products over Bernoulli measures was studied by Kakutani~\cite{Kakutani1951}, and the more general Markov case was studied by Bufetov~\cite{Bufetov2003}. The latter introduced  a condition that he called ``strongly connected''   to characterize ergodicity of certain skew products involving Markov measures~\cite[Theorem~4]{Bufetov2003}.  Restating Bufetov's criterion in our setting, for consistency of terminology, we rename this condition \emph{strong irreducibility}; his result was also recently extended in a paper by Lummerzheim et al.~\cite[Theorem~4.3]{Lummerzheim2025}. 

In Section~\ref{ss:sft} we define $\varphi$-irreducibility of a subshift with respect to a morphism onto a finite group, which characterizes topological transitivity in skew products. Section~\ref{ss:Markov} treats skew products of shift spaces over Markov measures, using the fact that irreducibility implies ergodicity. In Section~\ref{ss:strong} we discuss strong irreducibility, which provides topological transitivity simultaneously for all skew products.

\subsection{\texorpdfstring{Shifts of finite type and $\varphi$-irreducibility}{Shifts of finite type and phi-irreducibility}}
\label{ss:sft}

Recall that a shift $X$ is an $r$-step shift of finite type (SFT, $r\geq 1$) if there is a list $F\subseteq A^{r+1}$ of \emph{forbidden factors} of length $r+1$, with the property that an infinite word $x \in A^{\Z}$ belongs to $X$ precisely when none of its factors of length $r+1$ are in $F$.
Recall also that  a shift  $X$ is  \emph{topologically transitive} if for every pair $(U,V)$ of nonempty open sets in $X$, there is $n>0$ for which 
$S^nU \cap V \neq \varnothing$. This is equivalent with  the \emph{irreducibility} of $X$,  i.e. for  every 
 $u,v\in\cL(X)$, there is $w \in A^*$  such that 
$uwv \in \cL(X)$.   
For more on shifts of finite type,  see e.g.~\cite{LM:95,Petersen1983}.  
Topological transitivity will be used to prove ergodicity  for $r$-step Markov measures fully supported on  $r$-step  shifts of finite type in Section~\ref{ss:Markov}. 

\begin{definition}\label{d:phi-irreducible}
    Let $X$ be a shift on $A$, $G$ a finite group, and $\varphi\from A^*\to G$ be a morphism onto $G$. We say that $X$ is \emph{$\varphi$-irreducible} if, for all $u,v\in\cL(X)$, there exists $w\in A^*$ such that $uwv\in \cL(X)$ and $\varphi(uw) = 1_G$. 
\end{definition}

Clearly $\varphi$-irreducibility always implies irreducibility. This is a special case of the following remark.
\begin{remark}
    For a morphism $\varphi\from A^*\to G$ as above, let
    \begin{equation*}
        \ker(\varphi) = \{(u,v)\in A^*\times A^* \mid \varphi(u) = \varphi(v)\}.
    \end{equation*}
    Observe that if $\psi\from A^*\to H$ is another morphism onto a finite group $H$ such that $\ker(\varphi)\subseteq\ker(\psi)$, then there exists a unique morphism $\beta\from G\to H$ such that $\beta\circ\varphi = \psi$, and as a result, $\varphi$-irreducibility implies $\psi$-irreducibility.
\end{remark}

We shall prove the following result, which shows that $\varphi$-irreducibility is precisely the notion needed for topological transitivity of the original shift to propagate to skew products with finite groups.
\begin{theorem}\label{t:transitive}
    Let $X$ be an $r$-step shift of finite type on $A$ and let $\varphi\from A^*\to G$ be a morphism onto a finite group.  Then $X$ is $\varphi$-irreducible if and only if the skew product $G \skewprod_{\varphi}  X$ is topologically transitive. 
\end{theorem}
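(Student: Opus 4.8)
The plan is to work with the conjugate description of the skew product as a shift space $Y = \Psi(G \rtimes X)$ on the alphabet $G \times A$ provided by \cref{l:isomskewprod}, since topological transitivity is a conjugacy invariant and is most easily checked via the combinatorial criterion on $\cL(Y)$: namely, $G \rtimes X$ is topologically transitive iff for every $p, q \in \cL(Y)$ there is a word $z$ with $pzq \in \cL(Y)$. The first step is to unwind what $\cL(Y)$ looks like: a word over $G \times A$ lies in $\cL(Y)$ iff, after projecting to $A^*$ it lies in $\cL(X)$, and iff its $G$-components form a consecutive run of the form $g, g\varphi(x_0), g\varphi(x_0 x_1), \dots$ for some starting value $g \in G$. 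Concretely, a nonempty block of $\cL(Y)$ is determined by a pair $(g, w)$ with $w = w_0 \cdots w_{n-1} \in \cL(X)$ and equals $(g, w_0)(g\varphi(w_0), w_1) \cdots (g \varphi(w_{[0,n-1)}), w_{n-1})$; I will call this block $B(g,w)$.

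For the forward direction, assume $X$ is $\varphi$-irreducible, and take two blocks $B(g,u)$ and $B(h,v)$ in $\cL(Y)$. We want a word $z$ over $G \times A$ with $B(g,u)\, z\, B(h,v) \in \cL(Y)$. The key point is that once we fix the left factor $B(g,u)$, the $G$-component of everything to the right is forced: a completion to the right of $B(g,u)$ corresponds exactly to a word $w \in \cL(X)$ with $uw \in \cL(X)$, and the second block starts with $G$-component $g\varphi(uw)$. So we need $w \in A^*$ with $uwv \in \cL(X)$ \emph{and} $g\varphi(uw) = h$, i.e. $\varphi(uw) = g^{-1}h$. Here is where one reduces to $\varphi$-irreducibility as stated (which only gives $\varphi(uw) = 1_G$): since $\varphi$ is onto $G$, pick a word $t \in A^*$ with $\varphi(t) = g^{-1} h$, and first use irreducibility of $X$ (implied by $\varphi$-irreducibility) to glue $t$ after $u$, getting $utv' \in \cL(X)$ for a suitable relocation — more cleanly, apply $\varphi$-irreducibility to the pair $(u, tv) \in \cL(X) \times \cL(X)$ to get $w'$ with $uw'tv \in \cL(X)$ and $\varphi(uw') = 1_G$; then $w := w' t$ satisfies $uwv \in \cL(X)$ and $\varphi(uw) = \varphi(uw')\varphi(t) = g^{-1}h$, as desired. (One must check $tv \in \cL(X)$, which follows by first extending $t$ on the left inside $\cL(X)$ and invoking that $\cL(X)$ is factor-closed; this is a routine bookkeeping point.) Then $B(g,u)\, B(g\varphi(u), w)\, B(h,v)$ assembles to a legal block of $\cL(Y)$, giving transitivity.

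For the converse, assume $G \rtimes X$ is topologically transitive and let $u, v \in \cL(X)$; we must produce $w$ with $uwv \in \cL(X)$ and $\varphi(uw) = 1_G$. Apply transitivity in $Y$ to the blocks $B(1_G, u)$ and $B(1_G, v)$: there is a connecting word giving $B(1_G, u)\, z\, B(1_G, v) \in \cL(Y)$. Projecting to $A^*$ yields $u w v \in \cL(X)$ where $w$ is the $A$-projection of $z$; and reading off the $G$-component where the second block begins, the constraint forces $1_G \cdot \varphi(uw) = 1_G$, i.e. $\varphi(uw) = 1_G$. Here I should use that $r$-step SFT: transitivity in the shift is equivalent to the combinatorial gluing property only in general for the language, but this holds for any shift, so the SFT hypothesis is not strictly needed for this equivalence — it is presumably kept because in the SFT case topological transitivity is the natural finite-type notion and because \cref{cor:SFT} will feed this into a Markov-measure ergodicity argument.

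The main obstacle I anticipate is the careful handling of the ``relocation'' subtleties when concatenating words in $\cL(X)$: $\varphi$-irreducibility is stated for pairs $(u,v)$, but to realize a prescribed group element $g^{-1}h$ one needs to first absorb a word $t$ with $\varphi(t) = g^{-1}h$ into the picture and verify all the intermediate words stay in $\cL(X)$ (using only that $\cL(X)$ is factorial and that $\varphi$ is surjective). None of this is deep, but it is the part where an off-by-one or a missing factor-closedness check could sneak in, so it deserves to be written out precisely rather than waved through.
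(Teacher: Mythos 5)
Your converse direction (transitivity implies $\varphi$-irreducibility) is correct and matches the paper's argument, modulo the cosmetic choice of working in the conjugate shift $Y=\Psi(G\rtimes X)$ instead of with cylinders in $G\rtimes X$. The forward direction, however, has a genuine gap, and it sits exactly at the point you dismiss as ``routine bookkeeping.'' You choose $t\in A^*$ with $\varphi(t)=g^{-1}h$ using only surjectivity of $\varphi$ on the free monoid, and then apply $\varphi$-irreducibility to the pair $(u,tv)$ --- but $\varphi$-irreducibility is only available for pairs of words in $\cL(X)$, and nothing guarantees $t\in\cL(X)$, let alone $tv\in\cL(X)$. Your proposed fix (``extend $t$ on the left inside $\cL(X)$ and use factor-closedness'') does not work: factor-closedness passes from a word to its factors, so if $t$ is not a factor of any point of $X$ no extension of $t$ lies in $\cL(X)$ either. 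Producing, for each $g\in G$, a word \emph{of the language} with image $g$ is precisely the fiber-ergodicity statement, and the paper proves it as a separate lemma (\cref{l:fiber-ergodicity}) whose proof genuinely uses the $r$-step SFT hypothesis, gluing length-$(r+1)$ blocks through connectors of trivial $\varphi$-image.

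There is a second, related defect: even after you have a representative $u'\in\cL(X)$ with $\varphi(u')=g^{-1}h$, you cannot simply apply $\varphi$-irreducibility to the pair $(u',v)$ to attach $v$, because that produces a connector $s$ with $\varphi(u's)=1_G$, destroying the prescribed group element. The paper's route is to first glue $u$ to $u'$ with trivial contribution, then extend $uw_0$ on the right by a word $v'$ of length at least $r+1$, glue $v'$ to $v$ with $\varphi(v'v'')=1_G$, and finally invoke the $r$-step SFT property to conclude that the overlapping concatenation $uw_0v'v''v$ lies in $\cL(X)$. So the SFT hypothesis is used twice in the forward direction (fiber ergodicity and the final overlap gluing); your remark that it is ``not strictly needed'' is accurate only for the converse. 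To repair your proposal you would essentially have to reproduce \cref{l:fiber-ergodicity} and the overlap-gluing step, i.e.\ the paper's proof.
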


Before doing so, we establish an intermediate result involving the following notion.

\begin{definition}
    We say that $X$ is \emph{fiber ergodic} with respect to $\varphi$ if, for every $g,h\in G$ there exists $w\in\cL(X)$ such that $g\varphi(w) = h$; or equivalently the restriction of $\varphi$ to $\cL(X)$ is onto.
\end{definition}

Fiber ergodicity follows from $\varphi$-irreducibility for shifts of finite type, as shown next.

\begin{lemma}\label{l:fiber-ergodicity}
    Let $X$ be a shift of finite type on $A$ and let $\varphi\from A^*\to G$ be a morphism onto a finite group. If $X$ is $\varphi$-irreducible, then it is fiber ergodic with respect to $\varphi$. 
\end{lemma}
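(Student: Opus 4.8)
The plan is to establish fiber ergodicity in its literal form, namely that the restriction $\varphi|_{\cL(X)}$ is onto $G$: for $g,h\in G$ the condition $g\varphi(w)=h$ amounts to $\varphi(w)=g^{-1}h$, so surjectivity of $\varphi|_{\cL(X)}$ is exactly what the definition requires. Set $S=\varphi(\cL(X))\subseteq G$. I will show that $S$ is a subgroup of $G$ containing a generating set, whence $S=G$. Since $\varphi$ is onto and $A$ generates the monoid $A^*$, the subset $\varphi(A)$ generates $G$ (in a finite group the submonoid generated by $\varphi(A)$ is already a subgroup); and since $A\subseteq\cL(X)$ by our standing convention, we have $\varphi(A)\subseteq S$. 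Also $1_G=\varphi(\varepsilon)\in S$. It therefore remains only to prove that $S$ is closed under the operation $(a,b)\mapsto a^{-1}b$, for a subset of a group containing the identity and closed under this operation is a subgroup: taking $b=1_G$ yields closure under inverses, and then applying the operation to $(a^{-1},b)$ yields closure under products.

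This closure is the one and only place where $\varphi$-irreducibility enters, and where the finite-type hypothesis turns out to be unnecessary; only the fact that $\cL(X)$ is factorial will matter. Let $a,b\in S$, say $a=\varphi(u)$ and $b=\varphi(v)$ with $u,v\in\cL(X)$. Applying $\varphi$-irreducibility to the pair $(u,v)$ produces a word $w\in A^*$ with $uwv\in\cL(X)$ and $\varphi(uw)=1_G$. The essential observation is that one should \emph{not} read off $\varphi$ of the whole word $uwv$, which merely returns $\varphi(v)=b$ because the prescribed cancellation $\varphi(uw)=1_G$ erases the contribution of $u$; instead one passes to the suffix $wv$. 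Being a factor of $uwv\in\cL(X)$, the word $wv$ lies in $\cL(X)$, and from $\varphi(uw)=1_G$ we get $\varphi(w)=\varphi(u)^{-1}=a^{-1}$, so that $\varphi(wv)=\varphi(w)\varphi(v)=a^{-1}b$. Hence $a^{-1}b\in S$, as desired.

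Combining the two paragraphs, $S$ is a subgroup of $G$ containing the generating set $\varphi(A)$, so $S=G$ and $X$ is fiber ergodic. I expect the only genuinely delicate point to be the one already flagged: recognizing that the useful byproduct of $\varphi$-irreducibility is the suffix $wv$ rather than the full connecting word $uwv$. That $\varphi$-irreducibility is truly needed (and not merely that $\varphi(A)$ generates $G$) is confirmed by the fact that $\varphi(\cL(X))$ can be a proper subset of $G$ for shifts that are not $\varphi$-irreducible, since $\cL(X)$ is not closed under concatenation. Everything else is routine, and in particular the argument applies verbatim to any $\varphi$-irreducible shift, not just shifts of finite type.
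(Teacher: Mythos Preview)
Your proof is correct, and it is genuinely different from the paper's argument. The paper proceeds constructively: given $g\in G$, it writes $g=\varphi(a_1)\cdots\varphi(a_l)$ as a product of images of letters, extends each $a_i$ to a word $u_i\in\cL(X)$ of length $r+1$, and then uses $\varphi$-irreducibility together with the $r$-step SFT property to glue the $u_i$ into a single word $z\in\cL(X)$ with $\varphi(z)=g$. Your argument is structural rather than constructive: you show directly that $S=\varphi(\cL(X))$ is a subgroup of $G$ containing the generating set $\varphi(A)$, the key step being that $\varphi$-irreducibility applied to $u,v\in\cL(X)$ yields the \emph{factor} $wv\in\cL(X)$ with $\varphi(wv)=\varphi(u)^{-1}\varphi(v)$. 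The main advantage of your route is that it never invokes the finite-type hypothesis---only factoriality of $\cL(X)$ and the convention $A\subseteq\cL(X)$---so it proves the stronger statement that $\varphi$-irreducibility implies fiber ergodicity for \emph{any} shift space. The paper's approach, by contrast, has the virtue of producing an explicit word mapping to a prescribed $g$, which may be useful elsewhere, but at the cost of an unnecessary hypothesis for this particular lemma.
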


\begin{proof}
    Let $r\geq 1$ such that $X$ is an $r$-step SFT. Take $g\in G$. Since $\varphi$ is onto, we may find letters $a_1,\ldots, a_k\in A$ such that $\varphi(a_1\ldots a_k) = g$. For $i=1,\ldots,k$ let $u_i$ be a word of length $r+1$ in $\cL(X)$ starting with $a_i$. Let $t_i$ and $v_i$ be the suffix and prefix of length $r$ of $u_i$. By assumption, there exists for each $i = 1,\ldots k-1$ a word $w_i$ such that $t_iw_iv_{i+1}\in\cL(X)$ and $\varphi(t_iw_i)=1_G$. Since $X$ is an $r$-step shift, it follows that the word $z = u_1w_1\ldots w_{k-1}u_kw_k$ belongs to $\cL(X)$, and
    \begin{equation*}
        \varphi(z) = \varphi(a_1)\varphi(t_1w_1)\ldots\varphi(a_k)\varphi(t_kw_k) = \varphi(a_1\ldots a_k) = g,
    \end{equation*}
    which proves that $X$ is fiber ergodic.
\end{proof}

\begin{proof}[Proof of Theorem~\ref{t:transitive}]
    For $g\in G$, define a relation $\prec_g$ on $\cL(X)$ by $u\prec_gv$ if there exists $w\in \cL(X)$ such that $uwv\in\cL(X)$ and $\varphi(uw)=g$. Observe that $u\prec_gv$ precisely when $T^m(\{1_G\}\times[u]_X)$ intersects $\{g\}\times[v]_X$ for some $m\geq |u|$. Therefore $G\skewprod_{\varphi}  X$ is topologically transitive precisely when all relations $\prec_g$, $g\in G$, are total. In particular, whenever this is the case, $\prec_{1_G}$ must contain all pairs $u,v\in \cL(X)$, which is precisely the definition of $\varphi$-irreducibility. Thus topological transitivity of $G\skewprod_{\varphi}  X$ implies $\varphi$-irreducibility of $X$. It remains to prove the converse.

    Assume that $X$ is $\varphi$-irreducible, so that ${\prec} = {\prec_{1_G}}$ contains all pairs of words in $\cL(X)$. Take $u,v\in\cL(X)$ and $g\in G$; we need to show that $u\prec_g v$. 

    By fiber ergodicity (which holds thanks to Lemma~\ref{l:fiber-ergodicity}), there is $u' \in \cL(X)$ such that $\varphi(u')=g$. Since $u\prec u'$, there is $z\in\cL(X)$ such that $uzu'\in\cL(X)$ and $\varphi(uz)=1_G$. Then $w_0=zu'$ satisfies $uw_0 \in \cL(X)$ and $\varphi(uw_0)=g$.

    Extend $uw_0$ to a word $wu_0v' \in \cL(X)$ with $|v'| \geq r+1$. Since $v'\prec v$, there is a word $v''$ such that $v'v''v \in \cL(X)$ and $\varphi(v'v'')=1_G$. Then all subwords of length $r+1$ of $uw_0v'v''v$ are in $\cL(X)$, and hence $uw_0v'v''v \in \cL(X)$. Finally, letting $w=w_0v'v''$, we have $uwv \in \cL(X)$ and $\varphi(uw)=g$. This shows that $u\prec_g v$, concluding the proof.
\end{proof}
It is not hard to see directly  that topological transitivity of $G\skewprod_{\varphi}  X$ implies fiber ergodicity (as also follows from Theorem~\ref{t:transitive}); but the converse is false, as shown by the following example.

\begin{example}  \label{eg:periodicskewprod-2bis} 
Take again the skew product from Example~\ref{eg:periodicskewprod-1}, based on the three-element shift generated by the periodic infinite word $(abc)^{\infty}$ taken with respect to the morphism $\varphi\from\{a,b,c\}^*\to \Z/2\Z$, $\varphi(a)=\varphi(b)=1$, $\varphi(c)=0$. Then $X$ is fiber ergodic but $G\skewprod_{\varphi}  X$ is not topologically transitive.
\end{example}

Finally we prove the following useful property of $\varphi$-irreducibility in shifts of finite type. It shows that for shifts of finite type, the task of verifying the condition in Definition~\ref{d:phi-irreducible} can be reduced to a finite set of words.

\begin{proposition}\label{p:reduction}
    Let $X$ be an $r$-step shift of finite type on $A$ and $\varphi\from A^*\to G$ be a morphism onto a finite group. Then for $X$ to be $\varphi$-irreducible, it suffices that, for all $u,v\in\cL(X)$ with $|u|=|v|=r$, there exists $w\in A^*$ such that $uwv\in \cL(X)$ and $\varphi(uw) = 1_G$. 
\end{proposition}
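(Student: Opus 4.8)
The plan is to bootstrap the finite hypothesis to the full definition of $\varphi$-irreducibility in three moves. We may assume $X$ is nonempty. Throughout I use the standard splicing property of $r$-step shifts of finite type, already implicit in the proofs of \cref{l:fiber-ergodicity,t:transitive}: if $p_0c$ and $cq_0$ lie in $\cL(X)$ with $|c|\geq r$, then $p_0cq_0\in\cL(X)$, since splicing two bi-infinite points realizing $p_0c$ and $cq_0$ along their common block $c$ creates no new block of length $r+1$. I also use freely that every word of $\cL(X)$ can be extended arbitrarily far to the right inside $\cL(X)$, and that factors of words of $\cL(X)$ lie in $\cL(X)$. The first move is easy: the stated hypothesis already implies fiber ergodicity, because the proof of \cref{l:fiber-ergodicity} only ever applies the defining property of $\varphi$-irreducibility to pairs of words of length exactly $r$, so it goes through verbatim and gives that $\varphi|_{\cL(X)}$ is onto.

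The heart of the argument is the second move: upgrading the hypothesis to the statement $(\ast)$ that for all $u',v'\in\cL(X)$ with $|u'|=|v'|=r$ and all $\gamma\in G$, there is $w$ with $u'wv'\in\cL(X)$ and $\varphi(u'w)=\gamma$. The naive attempt — pick $z\in\cL(X)$ with $\varphi(z)=\gamma$ by fiber ergodicity, connect $u'$ to the length-$r$ prefix of $z$ using the hypothesis, run through $z$, then connect the length-$r$ suffix $s_z$ of $z$ to $v'$ — only yields $\varphi(u'w)=\gamma\varphi(s_z)^{-1}$, because the terminal connector $w_2$ merely satisfies $\varphi(s_zw_2)=1_G$. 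The remedy is to build $z$ so that its length-$r$ tail cancels this correction: take $y\in\cL(X)$ with $\varphi(y)=\gamma$, extend $y$ to the right inside $\cL(X)$ to $z=y\xi$ with $|\xi|=r$, and set $s_z=\xi$, so that $z=ys_z$; let $p_z$ be the length-$r$ prefix of $z$. Applying the hypothesis to the length-$r$ pairs $(u',p_z)$ and $(s_z,v')$ produces $w_1,w_2$ with $u'w_1p_z\in\cL(X)$, $\varphi(u'w_1)=1_G$, $s_zw_2v'\in\cL(X)$, and $\varphi(s_zw_2)=1_G$. Splicing first along $p_z$ and then along $s_z$ gives $u'w_1zw_2v'\in\cL(X)$, and for $w=w_1zw_2$ we obtain $\varphi(u'w)=\varphi(u'w_1)\,\varphi(y)\,\varphi(s_zw_2)=\gamma$ using $z=ys_z$; this proves $(\ast)$.

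The third move deduces $\varphi$-irreducibility from $(\ast)$. Given arbitrary $u,v\in\cL(X)$, extend them to the right inside $\cL(X)$ to words $\tilde u=u\xi_u$ and $\tilde v=v\xi_v$ of length at least $r$; write $\tilde u=p\bar u$ with $|\bar u|=r$ and let $\bar v$ be the length-$r$ prefix of $\tilde v$. Apply $(\ast)$ to $(\bar u,\bar v)$ with target $\gamma=\varphi(p)^{-1}$, obtaining $w'$ with $\bar uw'\bar v\in\cL(X)$ and $\varphi(\bar uw')=\varphi(p)^{-1}$; splicing $\tilde u$ along $\bar u$ and then $\tilde v$ along $\bar v$ yields $\tilde uw'\tilde v\in\cL(X)$. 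Then $w=\xi_uw'$ works: $uwv$ is a prefix of $\tilde uw'\tilde v\in\cL(X)$, hence lies in $\cL(X)$, and $\varphi(uw)=\varphi(\tilde uw')=\varphi(p)\varphi(\bar uw')=\varphi(p)\varphi(p)^{-1}=1_G$.

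I expect the second move to be the only genuine obstacle: the finite hypothesis licenses travel between length-$r$ words only along $\varphi$-trivial segments, so synthesizing a segment of arbitrary prescribed $\varphi$-image forces the padding device above, where $z$ is pre-loaded with a length-$r$ tail whose image neutralizes the correction contributed by the terminal connector. It is also worth noting that the third move applies $(\ast)$ with targets of the form $\varphi(p)^{-1}$, which is precisely why the full strength of $(\ast)$, rather than the stated hypothesis alone, is needed; the first and third moves are otherwise routine shift-of-finite-type surgery.
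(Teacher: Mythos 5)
Your proof is correct, but it follows a genuinely different route from the paper's. The paper first normalizes $v$ to length exactly $r$ and then splits into the cases $|u|>r$ and $|u|<r$; in the main case it writes $u=pq=p'q'$ with $|q|=|p'|=r$, uses the hypothesis to get $\varphi$-trivial connectors from the suffix $q$ back to the prefix $p'$ (a loop through $u$ itself) and from $q$ to $v$, and then \emph{pumps}: the word $u(w'u)^nw v$ lies in $\cL(X)$ and has image a power of $\varphi(p)$, so choosing $n$ suitably (using only the finite order of $\varphi(p)$ in $G$) kills the unwanted prefix contribution. You instead neutralize that same contribution multiplicatively: you first observe that \cref{l:fiber-ergodicity} already holds under the restricted hypothesis, and then prove the stronger connectivity statement $(\ast)$ that length-$r$ words can be joined by a connector of \emph{arbitrary} prescribed image, via the padding device $z=y\xi$ with a fresh length-$r$ tail so that the terminal trivial connector cancels only $\varphi(\xi)$; the general case then follows by right-extending $u$ and $v$ and applying $(\ast)$ with target $\varphi(p)^{-1}$. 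Your argument is essentially a length-$r$-seeded version of the proof of \cref{t:transitive}, so it buys the stronger intermediate fact $(\ast)$ (all relations $\prec_g$ are total on length-$r$ words) and treats the short and long cases of $u$ uniformly, at the price of invoking surjectivity of $\varphi$ through fiber ergodicity; the paper's pumping argument is more self-contained in that it needs only finiteness of $G$ and never leaves the orbit of the word $u$ itself. Both approaches rest on the same splicing property of $r$-step shifts of finite type, which you state and justify correctly via bi-infinite points.
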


\begin{proof}
    Let $u,v\in \cL(X)$. If $|v|<r$ then we may simply replace $v$ by one of its extensions in $\cL(X)$ of length $r$. If $|v|>r$, then we may likewise replace it by its prefix of length $r$. Thus we may assume moving forward that $|v|=r$. 

    First we assume that $|u|>r$. Let $u = pq = p'q'$ where $|q| = |p'| = r$. By assumption, there are words $w,w'$ such that $qw'p', qwv\in \cL(X)$ and $\varphi(qw) = \varphi(qw') = 1_G$. Observe that, for every $n\in\N$, $u(w'u)^nwv$ has all of its factors of length $r$ in $\cL(X)$. Thus the word $z_n = (w'u)^nw$ is such that $uz_nv\in\cL(X)$, while $\varphi(uz_n) = \varphi(p)^n$. Taking $n = |G|$, we get $\varphi(uz_n) = 1_G$, as needed.

    It remains to handle the case where $|u|<r$. Take a word $p$ such that $|p|=r-|u|$ and $pu\in\cL(X)$. Then we may find words $w, w'$ such that $puw'uwv\in\cL(X)$ with $\varphi(puw') = 1_G$ and $\varphi(puw'uw) = 1_G$, thus $\varphi(uw) = 1_G$.
\end{proof}

\begin{example} \label{eg:3element}
    Consider once again the three-element shift $X$ from Example~\ref{eg:periodicskewprod-1} generated by the periodic word $(abc)^\infty$, which is an irreducible 1-step shift of finite type. Let $\varphi\from \{a,b,c\}^*\to\Z/2\Z$, $\varphi(a) = \varphi(b) = 1$ and $\varphi(c) =0$.

    Observe that every word $w$ such that $awb\in\cL(X)$ is of the form $(bca)^n$ for some $n\geq 0$. In particular, it follows that for every such word $w$, $\varphi(aw) = 1$, thus $X$ is not $\varphi$-irreducible (though notice that it is fiber ergodic). It is however $\varphi$-irreducible if $\varphi$ is similarly defined but takes values instead in $\Z/3\Z$.
\end{example}

\begin{example}\label{eg:golden-mean-1}
    Let $X$ be the \emph{golden mean} shift, i.e. the 1-step SFT formed by sequences in $\{a,b\}^\Z$ avoiding the factor $bb$. Take the morphism $\varphi\from \{a,b\}^*\to\Z/2\Z$, $\varphi(a) = 1$, $\varphi(b)=0$.  As seen before, this choice of  a morphism allows one to count the parity of occurrences of $a$'s.  Then $X$ is $\varphi$-irreducible, as evidenced by the fact that $aab, aaa, ba, baab\in\cL(X)$, according to Proposition~\ref{p:reduction}. It is also not hard to verify directly that the skew product viewed as an SFT under the topological conjugacy $\Psi$ from Lemma~\ref{l:isomskewprod} is indeed irreducible. The shift $X$ and the skew product $\Z/2\Z\skewprod_{\varphi}  X$ are depicted in Figure~\ref{f:golden-mean}. Note that the same example works with $G=\Z/m\Z$ by considering longer words.
    \begin{figure}
        \centering
        \begin{tabular}{cc}
            \begin{tabular}{c}
                \begin{tikzpicture}[xscale=1.5]
                    \node[anchor=base] (a) at (0,0) {\strut$a$} ;             
                    \node[anchor=base] (b) at (1,0) {\strut$b$} ;             
                    \draw[->] ([yshift=-2pt]a.east) to ([yshift=-2pt]b.west) ;             
                    \draw[->] ([yshift=2pt]b.west) to ([yshift=2pt]a.east) ;             
                    \draw[->] (a) to [loop above] (a) ;
                \end{tikzpicture}
            \end{tabular}
            &
            \begin{tabular}{c}
                \begin{tikzpicture}[xscale=1.5,yscale=1.2]
                    \node[anchor=base] (1a) at (0,1) {\strut$(0,a)$} ;             
                    \node[anchor=base] (0a) at (0,0) {\strut$(1,a)$} ;             
                    \node[anchor=base] (1b) at (1,1) {\strut$(0,b)$} ;             
                    \node[anchor=base] (0b) at (1,0) {\strut$(1,b)$} ;             
                    \draw[->] ([xshift=-2pt]0a.north) to ([xshift=-2pt]1a.south) ;
                    \draw[->] (0a) to (1b) ;             
                    \draw[->] ([xshift=2pt]1a.south) to ([xshift=2pt]0a.north) ;
                    \draw[line width=2pt,white,->] (1a) to (0b) ; 
                    \draw[->] (1a) to (0b) ;
                    \draw[->] (0b) to (0a) ;             
                    \draw[->] (1b) to (1a) ;             
                \end{tikzpicture}
            \end{tabular}
        \end{tabular}
        \caption{The SFT and skew product from Example~\ref{eg:golden-mean-1}. The arrows  represent the two  respective dynamics, namely the shift map and the skew product  map}
        \label{f:golden-mean}
    \end{figure}
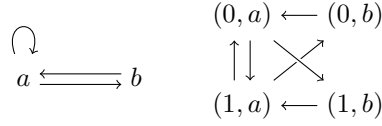
\end{example}

\subsection{Markov measures}\label{ss:Markov}

Using the topological transitivity condition from Theorem~\ref{t:transitive}, we establish ergodicity for skew products involving \emph{invariant Markov measures}. Recall that a measure $\mu$ on $A^\Z$ is an $r$-step Markov measure, $r\geq 1$, when for every word $w\in A^*$ of length $m\geq r$ such that $\mu(x_{[0,m)} = w)\neq 0$ and every letter $a\in A$, 
\begin{equation*}
    \mu(x_m = a \mid x_{[0,m)} = w) = \mu(x_{m}=a \mid x_{[m-r,m)} = w_{[m-r,m)}). 
\end{equation*}

Invariant Markov measures are also called \emph{stationary}. The support of an $r$-step invariant Markov measure $\mu$ is an $r$-step shift of finite type. We say that an $r$-step Markov measure $\mu$ is \emph{irreducible} when for all $u,v\in A^r$, there exists $m>0$ such that
\begin{equation*}
    \mu(x_{[m,m+r)}=v\mid x_{[0,r)}=u)>0.
\end{equation*}

Notice that this is equivalent to irreducibility of the shift of finite type supporting $\mu$. It is well known that a Markov measure is ergodic if and only if it is irreducible. The 1-step case may be found in~\cite[pp.~51--53]{Petersen1983}, while the general $r$-step case can be reduced to $r=1$ by passing to the higher block shift (whose definition is recalled after Example~\ref{ex:full}). We shall prove our second main result, which we now recall. 

\secondmain*

Recall that if $X$ is $\varphi$-irreducible, then it is irreducible, and so under the assumptions of the above theorem, $\mu$ itself must be ergodic. Recall also, from Lemma~\ref{l:isomskewprod}, that for every morphism $\varphi\from A^*\to G$ onto a finite group $G$ and every shift $X$, the skew product $X\skewprod _{\varphi} G$ is topologically conjugate to a subshift of $(G\times A)^\Z$ via the map 
\begin{equation*}
    \Psi(g,x)_n=(g\varphi^{(n)}(x),x_n),\quad n\in\Z.
\end{equation*}
    
\begin{lemma}\label{l:markov}
    Let $X$ be an $r$-step shift of finite type, $r\geq 1$, and $\varphi\from A^*\to G$ be a morphism onto a finite group $G$ with uniform probability measure $\nu$. 
    \begin{enumerate}
        \item The set $\Psi(G\skewprod _{\varphi} X)$ is an $r$-step shift of finite type.\label{i:sft}
        \item For every $r$-step Markov measure $\mu$ fully supported on $X$, the measure $(\nu\times\mu)\circ\Psi^{-1}$ is a Markov measure fully supported on $\Psi(G\skewprod _{\varphi} X)$.\label{i:markov}
    \end{enumerate}
\end{lemma}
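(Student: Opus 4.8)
The plan is to transport everything through the topological conjugacy $\Psi$ of \cref{l:isomskewprod} and exploit the fact that $\Psi$ is a sliding-block code of window size $1$ (it reads $x_n$ and the running product, which is itself determined by $x_{[0,n)}$). For part~\ref{i:sft}, I would argue directly on forbidden factors: a word $(g_0,x_0)(g_1,x_1)\cdots(g_m,x_m)$ over $G\times A$ lies in $\cL(\Psi(G\rtimes X))$ if and only if $x_0x_1\cdots x_m\in\cL(X)$ and $g_{k+1}=g_k\varphi(x_k)$ for each $k$. Since $X$ is an $r$-step SFT, membership of $x_0\cdots x_m$ in $\cL(X)$ is a local condition on windows of length $r+1$; and the cocycle relation $g_{k+1}=g_k\varphi(x_k)$ is a local condition on windows of length $2$. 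Hence the whole constraint is local on windows of length $r+1$ (as $r\geq 1$), so $\Psi(G\rtimes X)$ is defined by forbidding a finite list of words of length $r+1$, i.e. it is an $r$-step SFT. One should note that full support of $\mu$ on $X$ is not needed for this part.

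For part~\ref{i:markov}, write $Y=\Psi(G\rtimes X)$ and $\eta=\Psi_*(\nu\times\mu)$. First, $\eta$ is invariant because $\nu\times\mu$ is invariant under $T$ and $\Psi\circ T=S\circ\Psi$. Full support: a cylinder $[(g_0,x_0)\cdots(g_m,x_m)]_Y$ pulls back under $\Psi$ to $\{g_0\}\times[x_0\cdots x_m]_X$ when the cocycle relations hold (and to $\varnothing$ otherwise), so its $\eta$-measure is $\tfrac1{|G|}\mu(x_0\cdots x_m)$, which is positive exactly when $x_0\cdots x_m\in\cL(X)$; since $\mu$ is fully supported on $X$, $\eta$ is fully supported on $Y$. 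For the Markov property, I would compute the conditional probability
\begin{equation*}
    \eta\bigl(y_m=(g,a)\mid y_{[0,m)}=(g_0,x_0)\cdots(g_{m-1},x_{m-1})\bigr)
\end{equation*}
for $m\geq r$. Using the explicit pullback above, the numerator $\eta(y_{[0,m+1)}=\cdots)$ equals $\tfrac1{|G|}\mu(x_{[0,m)}a)$ if $g=g_{m-1}\varphi(x_{m-1})$ and $g_{k+1}=g_k\varphi(x_k)$ for all $k<m-1$, and $0$ otherwise; the denominator is $\tfrac1{|G|}\mu(x_{[0,m)})$ under the same cocycle compatibility. The factors of $\tfrac1{|G|}$ and the cocycle-compatibility constraints cancel in the ratio, leaving
\begin{equation*}
    \eta\bigl(y_m=(g,a)\mid y_{[0,m)}=\cdots\bigr)
    =\mathbf{1}\{g=g_{m-1}\varphi(x_{m-1})\}\,\mu\bigl(x_m=a\mid x_{[0,m)}=x_{[0,m)}\bigr),
\end{equation*}
where I am abusing notation for the last conditional. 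Since $\mu$ is $r$-step Markov, this last conditional depends only on $x_{[m-r,m)}$; and the indicator depends only on $g_{m-1}$ and $x_{m-1}$, hence only on $y_{m-1}$. So the whole expression depends only on $y_{[m-r,m)}$, which is the $r$-step Markov property for $\eta$.

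I expect the main (though still modest) obstacle to be bookkeeping: one must be careful that the cocycle-compatibility constraints appearing in both numerator and denominator are \emph{the same} constraint — that is, for the denominator to be nonzero the prefix $(g_0,x_0)\cdots(g_{m-1},x_{m-1})$ must already satisfy $g_{k+1}=g_k\varphi(x_k)$ for $k\leq m-2$, so those constraints genuinely cancel and do not leave residual dependence beyond the last coordinate $y_{m-1}$. A clean way to avoid the bookkeeping entirely is to observe that $\Psi^{-1}$ is a one-block factor map from $Y$ onto $G\rtimes X$, that the extra coordinate $g_0$ is distributed uniformly and independently of $x$ under $\nu\times\mu$, and that left translation by $G$ is a measure-preserving $S$-commuting symmetry of $Y$ (as in the proof of \cref{prop:ue}); invariance under this symmetry forces the transition probabilities out of a state $(g,a)$ to be independent of $g$, and then the Markov property for $\eta$ reduces immediately to that for $\mu$ via the one-block factor map. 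Either route finishes the lemma.
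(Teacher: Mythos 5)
Your proposal is correct and follows essentially the same route as the paper: part~(i) by observing that membership in $\cL(\Psi(G\rtimes X))$ is the conjunction of the local condition defining $\cL(X)$ and the window-two cocycle relation $g_{k+1}=g_k\varphi(x_k)$, and part~(ii) by computing the conditional probabilities through the pullback $\Psi^{-1}[\omega]=\{g_0\}\times[w]_X$ (with cocycle compatibility), so that the $\tfrac1{|G|}$ factors cancel and the $r$-step Markov property of $\mu$ transfers to $\Psi_*(\nu\times\mu)$. Your explicit verification of full support (which the statement asserts and the paper leaves implicit) and the alternative symmetry-based shortcut are harmless additions, not departures from the paper's argument.
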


\begin{proof}
    \ref{i:sft}. Take a word $\omega\in (G\times A)^*$ with $\omega_i = (g_i,w_i)$ and $m=|\omega|\geq r+1$. Observe that $\omega$ belongs to the language of $\Psi(G\skewprod_{\varphi}  X)$ as long as $w = w_0\ldots w_{m-1}$ belongs to $\cL(X)$ and $g_i\varphi(w_i) = g_{i+1}$. Both of those conditions only need to be verified on factors of length $r+1$ of $\omega$, hence $\Psi(G\skewprod _{\varphi} X)$ is indeed an $r$-step shift of finite type.

    \ref{i:markov}. Fix an $r$-step Markov measure on $X$ and let $\pi = (\nu\times\mu)\circ\Psi^{-1}$. Fix a word $\omega\in (G\times A)^*$, with $\omega_i = (g_i,w_i)$, $m=|\omega|\geq r$, and $w = w_0\ldots w_{m-1}$. Assume that $\omega$ belongs to the language of $\Psi(G\skewprod_{\varphi}  X)$, which means that $w\in\cL(X)$ and $g_{i+1} = g_i\varphi(w_i)$, $i=0,\ldots,m-2$. Take a letter $\alpha = (g,a)\in G\times A$. In case $g\neq g_{m-1}\varphi(a)$ then it is clear that 
    \begin{equation*}
        \pi(\xi_m=\alpha \mid \xi_{[0,m)} = \omega) = \pi(\xi_m=\alpha \mid \xi_{[m-r,m)} = \omega_{[m-r,m)}) = 0. 
    \end{equation*}

    Thus we may suppose from now on that $g=g_{m-1}\varphi(a)$. Then we have
    \begin{align*}
        \pi( \xi_m = \alpha \mid \xi_{[0, m)} = \omega ) 
        &= \frac{\pi( \xi_m = \alpha,\ \xi_{[0, m)} =  \omega )}{\pi( \xi_{[0, m)} = \omega )} \\
        &= \frac{(\nu\times\mu)(\{g_0\}\times [wa])}{(\nu\times\mu)(\{g_0\}\times [w])} \\
        &= \frac{\mu(x_m = a,\ x_{[0, m)} = w_{[0, m)})}{\mu(x_{[0,m)} = w)} \\
        &= \mu( x_m = a \mid x_{[0,m)}=w ).
    \end{align*}
    Using a similar argument together with the invariance of $\pi$, we find 
    \begin{equation*}
        \pi( \xi_m = \alpha \mid \xi_{[m-r,m)} = \omega_{[m-r,m)} ) = \mu( x_m = a \mid x_{[m-r,m)}=w_{[m-r,m)} ).
    \end{equation*}
    The fact that $\pi$ is $r$-step Markov now follows directly from the fact that $\mu$ is.
\end{proof}

\begin{proof}[Proof of Theorem~\ref{t:second-main}]
    In light of Lemma~\ref{l:markov}, the measure $(\nu\times\mu)\circ\Psi^{-1}$ is an $r$-step Markov measure fully supported on a shift of finite type, which is topologically transitive by Theorem~\ref{t:transitive}. Therefore, $(\nu\times\mu)\circ\Psi^{-1}$ must be ergodic, and since $\Psi$ is a topological conjugacy, we deduce that the measure $\nu\times\mu$ is ergodic as well.

    The last part of the statement is then an immediate consequence of Corollary~\ref{c:equidistribution}.
    
\end{proof}

\begin{example}\label{ex:full}
    Consider the full shift on $A=\{a,b\}$, equipped with a Bernoulli measure $\mu\from a\mapsto p$, $b\mapsto 1-p$, where $0<p<1$. Let $\varphi\to\Z/2\Z$ be the morphism defined by $a\mapsto 1$, $b\mapsto 0$, and $\nu$ be the uniform measure probability measure on $\Z/2\Z$. Since, clearly, the full shift is $\varphi$-irreducible, the product measure $\nu\times\mu$ is ergodic on the skew product $\Z/2\Z\skewprod_{\varphi}  A^\Z$. But looking directly at the transition diagram of the skew product, we can see that it is not only irreducible but also aperiodic. Hence we deduce that the skew product is strongly mixing, and by Remark~\ref{r:strong-convergence},
    \begin{equation*}
        \delta_\mu(L) = \lim_{n\to\infty}\mu([L\cap A^n]) = 1/2. 
    \end{equation*}
\end{example}

Next let us  briefly describe how ergodicity for $r$-step Markov measures can also be reduced to the 1-step case by passing to the higher block shift. Let $\mu$ be an invariant $r$-step Markov measure, $r\geq 1$, with support a shift of finite type $X$. Let $A_X^{[r]} = \cL(X)\cap A^r$, which we view as an alphabet. We define the map $\beta_r\from A^\Z\to (A_X^{[r]})^\Z$ by
\begin{equation*}
    \beta_r(x)_i = x_{[i,i+r)}. 
\end{equation*}
The image $\beta_r(X)$ forms a shift space on $A_X^{[r]}$ denoted $X^{[r]}$, which is called the \emph{higher block shift}. The image measure of $\mu$ under $\beta_r$, denoted $\mu^{[r]}$, is an invariant 1-step Markov measure on this shift space. Moreover, $\mu$ is ergodic exactly when $\mu^{[r]}$ is. 

The density of a group language also carries over to the higher block shift, as follows. Given a morphism $\varphi\from A^*\to G$ onto a finite group, let $\varphi^{[r]}\from A_X^{[r]}\to G$ be the morphism defined by $\varphi(w) = \varphi(w_0)$ for $w = w_0\ldots w_{r-1}\in A_X^{[r]}$. To the group language $L = \varphi^{-1}(K)$, $K\subseteq G$, corresponds the group language $L^{[r]} = (\varphi^{[r]})^{-1}(K)$. It is straightforward to check that $\mu^{[r]}(L^{[r]}\cap (A_X^{[r]})^i) = \mu([L\cap A^i])$, and as a result:
\begin{equation*}
    \delta_\mu(L) = \delta_{\mu^{[r]}}(L^{[r]}).
\end{equation*}

\subsection{Strong irreducibility}\label{ss:strong}

We now relate $\varphi$-irreducibility to a condition introduced by Bufetov~\cite{Bufetov2003}, originally under the name \emph{strongly connected}. The term \emph{strictly irreducible} was used in Lummerzheim et al.~\cite{Lummerzheim2025}. We make a compromise between the two and use the term \emph{strongly irreducible}. We shall see below that if it holds, then \emph{all} skew products by morphisms onto finite groups are topologically transitive; on the other hand, when it fails, such skew products can behave in a variety of manners.

\begin{definition}\label{def:SI}
    Let $X$ be a shift on an alphabet $A$. For $r\geq 1$, we define the relation $\sim_r$ on $\cL(X)$ by $u \sim_r v$ if there exists $w \in A^r$ such that $wu, wv\in \cL(X)$. Since $\sim_r$ is symmetric, its transitive closure, which we denote by $\simeq_r$, is an equivalence relation. 

    An $r$-step shift of finite type is called \emph{strongly irreducible} if it is irreducible and the relation $\simeq_r$ is total, meaning that it has a single equivalence class.
\end{definition}

\begin{remark}
    It may happen that $\simeq_r$ has a single equivalence class even in the absence of irreducibility. This is the case in the 1-step shift of finite type $X$ on the alphabet $A=\{a,b\}$ consisting of sequences avoiding the factor $ba$.
\end{remark}

Recall the notation ${\prec} = {\prec_{1_G}}$, used in the proof of Theorem~\ref{t:transitive} for the relation defined by $u\prec v$ if and only if there exists $w\in\cL(X)$ such that $uwv\in\cL(X)$ and $\varphi(uw) = 1_G$.

\begin{proposition}\label{p:simeq-R}
    In an irreducible $r$-step SFT, $r\geq 1$, the following implication holds:
    \[
        u \simeq_r v \implies u\prec v.
    \] 
\end{proposition}

\begin{proof}
    Let $X$ be an irreducible $r$-step SFT. 
    We start by showing that given $u,v \in \cL(X)$ with $u \sim_r v$, there is a word $w$ such that $|w|\geq r$, $uwv \in \cL(X)$, and $\varphi(uw)=1_G$. 
    By definition of $\sim_r$, there is a word $z$ such that $|z|=r$ and $zu,zv \in \cL(X)$.
    Let $t\in A^*$ be a word such that $zut\in\cL(X)$ and $|ut|\geq r$. Since $X$ is irreducible, there exists a word $w_0\in A^*$ such that $utw_0zut \in \cL(X)$. Then $(utw_0z)^nu, z(utw_0z)^nv \in \cL(X)$ for all $n \geq 1$, since every factor of length $r$ in either of those words is a factor of either $zv$, $zut$, or $utw_0z$, all of which are in $\cL(X)$.
    Taking $n=|G|$, we find $\varphi((utw_0z)^n) = 1_G$. The word $w=(tw_0zu)^{n-1}tw_0z$ then satisfies $uw = (tw_0zu)^{n}$, and thus we get $uwv\in\cL(X)$ and $\varphi(uw)=1_G$, as required.

    Suppose next $u = u_0 \sim_r u_1 \sim_r \dots \sim_r u_{n-1} \sim_r u_n = v$. We want to find a word $w$ such that $uwv \in \cL(X)$ and $\varphi(uw)=1_G$. Using the above claim, there are words $w_i$ such that $u_iw_iu_{i+1} \in \cL(X)$, $\varphi(u_iw_i)=1_G$, and $|w_i| \geq r$.
    Then defining $w=w_0u_1w_1\ldots u_{n-1}w_{n-1}$ produces the requisite word such that $uwv\in\cL(X)$ and $\varphi(uw)=1_G$.
\end{proof}

We then deduce the following. It is our version of Bufetov's theorem~\cite[Theorem~5]{Bufetov2003} and its generalization by Lummerzheim et al.~\cite[Theorem~4.3]{Lummerzheim2025}, which we here specialize to the case of skew products with finite groups and morphisms, but generalize to the case of higher step shifts. 

\begin{theorem}\label{t:si}
    Let $X$ be an irreducible $r$-step SFT on $A$, $r \geq 1$. If $X$ is strongly irreducible, then it is $\varphi$-irreducible for every morphism $\varphi\from A^*\to G$ onto a finite group $G$. When $r=1$, the converse holds.
\end{theorem}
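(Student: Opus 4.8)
The plan is to handle the two implications separately. The forward implication is essentially immediate from \cref{p:simeq-R}. Indeed, suppose $X$ is strongly irreducible; by definition it is irreducible and $\simeq_r$ is total. Fix an arbitrary morphism $\varphi\from A^*\to G$ onto a finite group and recall the associated relation ${\prec}={\prec_{1_G}}$. Since $X$ is an irreducible $r$-step SFT, \cref{p:simeq-R} gives $\simeq_r\subseteq{\prec}$, so $\prec$ is total as well; but totality of $\prec_{1_G}$ is exactly the definition of $\varphi$-irreducibility. Hence $X$ is $\varphi$-irreducible for every such $\varphi$.

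For the converse (with $r=1$) I would argue contrapositively: assuming $\simeq_1$ is not total, I would exhibit a single morphism $\varphi\from A^*\to\Z/2\Z$ for which $X$ fails to be $\varphi$-irreducible. Think of $X$ as the set of bi-infinite paths in the strongly connected graph $\Gamma$ with vertex set $A$ and an edge $c\to d$ whenever $cd\in\cL(X)$, so that $\cL(X)$ is the set of finite paths in $\Gamma$. The first step is a reduction to the level of letters: I would check that $\simeq_1$ is total on $\cL(X)$ if and only if the finest equivalence relation $\approx$ on $A$ for which every successor set $\{d:c\to d\}$ lies in a single class is total. (One direction: $u\sim_1v$ forces the first letters of $u$ and $v$ to share a predecessor, hence to be $\approx$-equivalent; conversely, since single letters lie in $\cL(X)$ and $X$ is $1$-step, any $\approx$-chain between first letters lifts to a $\sim_1$-chain.) Thus the hypothesis yields at least two $\approx$-classes.

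The second step is the construction. Pick a non-constant function $\bar\theta$ from the (at least two element) set $A/{\approx}$ to $\Z/2\Z$, and let $\theta\from A\to\Z/2\Z$ be its composition with the quotient map; then $\theta$ is non-constant but constant on each successor set. Define $\varphi\from A^*\to\Z/2\Z$ on letters by $\varphi(c)=\theta(d)-\theta(c)$ for any $d$ with $c\to d$; this is well defined because every vertex has a successor and $\theta$ is constant on successor sets, and $\varphi$ is onto because $\theta$ differs at the endpoints of some edge. The key point is a telescoping identity: for any finite path $z_0\to z_1\to\cdots\to z_l$ in $\Gamma$,
\begin{equation*}
    \varphi(z_0z_1\cdots z_{l-1})=\sum_{i=0}^{l-1}\bigl(\theta(z_{i+1})-\theta(z_i)\bigr)=\theta(z_l)-\theta(z_0).
\end{equation*}
Now choose letters $a,b\in A$ with $\theta(a)\neq\theta(b)$. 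For any $w$ with $awb\in\cL(X)$, the word $awb$ is a path from $a$ to $b$ in $\Gamma$, and the identity gives $\varphi(aw)=\theta(b)-\theta(a)\neq 0=1_G$. Hence no $w$ satisfies $awb\in\cL(X)$ together with $\varphi(aw)=1_G$, so $X$ is not $\varphi$-irreducible, proving the contrapositive. (Degenerate cases are harmless: if $\lvert A\rvert\le 1$ or $X=\varnothing$, then $\approx$ is trivially total.)

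The part I expect to require the most care is the converse, and within it the reduction identifying $\simeq_1$ with the letter-level relation $\approx$, together with the observation that $\approx$ is precisely ``the finest equivalence for which successor sets are unions of classes'' --- it is this description that makes the target function $\theta$, and hence the morphism $\varphi$, available; everything afterwards is the short telescoping computation. I would also note in passing that the same $\varphi$ can be read off from the skew-product side: on $\Z/2\Z\times A$ the function $(h,c)\mapsto h+\theta(c)$ is constant along the edges of $\Psi(G\skewprod X)$, so that graph is disconnected, which re-proves the failure of topological transitivity through \cref{t:transitive}.
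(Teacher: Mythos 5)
Your proposal is correct and takes essentially the same route as the paper: the forward implication is exactly the appeal to \cref{p:simeq-R}, and your converse builds the same morphism as the paper's, since your coboundary $\varphi(c)=\theta(d)-\theta(c)$ for a non-constant two-coloring $\theta$ of the letter-level $\simeq_1$-classes coincides with the paper's indicator of the set $B$ of letters whose successor sets lie on the opposite side of a fixed class $C$. The only cosmetic differences are that you verify the key claim via the telescoping identity $\varphi(aw)=\theta(b)-\theta(a)$ instead of the paper's induction on $|w|$, and that you make the reduction to the letter-level equivalence (and the use of irreducibility to get surjectivity of $\varphi$) explicit.
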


\begin{proof}
    For the first part of the statement, notice that when $X$ is strongly irreducible then by Proposition~\ref{p:simeq-R}, the relation ${\prec} = {\prec_{1_G}}$ must be total for every morphism $\varphi\from A^*\to G$ onto a finite group $G$, which is precisely the definition of $X$ being $\varphi$-irreducible.

    We now suppose that $r=1$. We will prove the contrapositive: assuming that $X$ is not strongly irreducible, then it is not $\varphi$-irreducible for some $\varphi$. The construction presented is essentially the same as the one used in the proof~\cite[Theorem~4.3]{Lummerzheim2025}.

    Fix an equivalence class $C$ of $\simeq_1$ restricted to $A\times A$. For each $a\in A$, observe that the set of right extensions of $a$ in $X$,
    \begin{equation*}
        \rext(a) = \{b\in A \mid ab\in\cL(X)\},
    \end{equation*}
    is contained in a class of $\simeq_1$; thus either $\rext(a)\subseteq C$ or  $\rext(a)\subseteq  A\setminus C$. Let
    \begin{equation*}
        B = \{ a\in C \mid \rext(a) \subseteq A\setminus C \} \cup \{ a\in A\setminus C \mid \rext(a) \subseteq C \}.
    \end{equation*}
    
    Define a morphism $\varphi\from A^*\to\Z/2\Z$ by $\varphi(a) = 1$ if $a\in B$ and $\varphi(a) = 0$ otherwise. Let $a\in C$ and $b\in A$; take a word $w$ such that $awb\in \cL(X)$. We claim that $\varphi(aw) = 0$ if and only if $b\in C$. Observe that this claim finishes the proof, as it implies that $a\not\prec b$ whenever $b\in A\setminus C$. 

    To establish the claim, we argue by induction on $|w|$. We first consider the case   $|w|=0$. We have to check that
    $\varphi(a)=0$ if and only if $b\in C$. But $\varphi(a)=0$ if and only if $a \not \in B$, which is in turn equivalent to 
    $\rext(a) \subseteq C$, i.e. $b \in C$. For the induction step, suppose that $w = w'c$, $c\in A$, with $aw'b \in \cL(X)$, and that the claim holds for $w'$. First assume that $\varphi(aw)=0$. If $c\in C$, then by induction $\varphi(aw')=0$ and $\varphi(aw) = \varphi(c) = 0$, and as $b\in \rext(c)$, we get $b\in C$. If on the other hand $c\in A\setminus C$, then $\varphi(aw') = 1$, since  the claim holds for $w'$ and $ c \in  A \setminus C$, hence $\varphi(aw) = 1+\varphi(c) = 0$, and so $\varphi(c) = 1$, and $c \not \in B$. As $b\in \rext(c)$, it again follows that $b\in C$. Conversely, assume that $b\in C$. If $c\in C$, then as $b\in \rext(c)$, we get $c \not \in B$ and $\varphi(c) = 0$, while by induction $\varphi(aw') = 0$; hence $\varphi(w) = 0+0 = 0$. Likewise, if $c\in A\setminus C$, then $c\in B$  and $\varphi(aw') = 1 = \varphi(c)$ and $\varphi(aw) = 1+1 = 0$.
\end{proof}

At this time we are unsure whether the converse holds when $r>1$. Nonetheless, the first part of the above combined with Theorem~\ref{t:transitive} yields the following immediate corollary.
\begin{corollary}\label{c:si}
    Let $X$ be a strongly irreducible (hence irreducible)   $r$-step SFT on $A$, $r\geq 1$. For every morphism $\varphi\from A^*\to G$, assumed to be  onto the finite group $G$, the skew product $G\skewprod _{\varphi} X$ is topologically transitive.
\end{corollary}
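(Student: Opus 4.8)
The plan is to deduce \cref{c:si} as an essentially immediate consequence of the two ingredients already assembled in the excerpt, namely the first part of \cref{t:si} and the equivalence established in \cref{t:transitive}. The argument is a short chain of implications, so the only real work is to assemble them correctly.

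First I would invoke the hypothesis: $X$ is a strongly irreducible $r$-step SFT on $A$. By the first assertion of \cref{t:si}, strong irreducibility of $X$ implies that $X$ is $\varphi$-irreducible for \emph{every} morphism $\varphi\from A^*\to G$ onto a finite group $G$. Next I would fix an arbitrary such morphism $\varphi$ and apply \cref{t:transitive}, which states that for an $r$-step SFT, $\varphi$-irreducibility of $X$ is equivalent to topological transitivity of the skew product $G\rtimes X$. Combining the two, $G\rtimes X$ is topologically transitive; since $\varphi$ was arbitrary, this holds for all such $\varphi$, which is exactly the statement to be proved.

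There is essentially no obstacle here: the proof is a two-line composition of previously established results. The only points that require a moment's care are bookkeeping ones — checking that \cref{t:si} really gives $\varphi$-irreducibility for \emph{all} finite groups (not just $\Z/2\Z$), which it does in its first assertion, and checking that the hypotheses of \cref{t:transitive} (that $X$ be an $r$-step SFT, which is part of the definition of strong irreducibility) are met. I would also note in passing, as the corollary's own statement does, that strong irreducibility includes irreducibility by definition, so no separate verification of irreducibility is needed. Thus the write-up is a single short paragraph citing \cref{t:si} and \cref{t:transitive} in sequence.
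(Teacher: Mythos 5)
Your proof is correct and is exactly the paper's argument: the corollary is obtained by combining the first part of \cref{t:si} (strong irreducibility gives $\varphi$-irreducibility for every morphism onto a finite group) with the equivalence of \cref{t:transitive}. No gaps; the bookkeeping points you mention are the only ones needed.
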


To end this section we give a few examples of shifts of finite type which are or are not strongly irreducible. But first we make the following simple observation, similar to Proposition~\ref{p:reduction}.
\begin{proposition} \label{prop:si}
    Let $X$ be an irreducible $r$-step SFT on $A$, $r \geq 1$. Then for $X$ to be strongly irreducible, it suffices that $u\simeq_rv$  for all pairs $u,v\in \cL(X)$ with $|u|=|v|=r$.
\end{proposition}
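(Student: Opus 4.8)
The plan is to prove directly that, under the stated hypothesis, the transitive closure $\simeq_r$ has a single equivalence class on all of $\cL(X)$; since $X$ is assumed irreducible, this is exactly what is needed to conclude that $X$ is strongly irreducible, the irreducibility hypothesis being simply carried along as part of that definition. Everything rests on one reduction: \emph{every word of $\cL(X)$ is $\sim_r$-related, in a single step, to some word of length exactly $r$.} Granting this, if $u,v\in\cL(X)$ are arbitrary we pick length-$r$ words $p,q$ with $u\sim_r p$ and $v\sim_r q$; the hypothesis gives $p\simeq_r q$, and transitivity then gives $u\simeq_r v$. Hence $\simeq_r$ is total.

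To establish the reduction, fix $u\in\cL(X)$. Since $X$ is a shift space, $u$ occurs as a factor of some bi-infinite point of $X$, so it admits a left extension $wu\in\cL(X)$ with $w\in A^r$; moreover every prefix of $wu$ again lies in $\cL(X)$. If $|u|\geq r$, take $p$ to be the length-$r$ prefix of $u$: then $wp$ is a prefix of $wu$, so $wp\in\cL(X)$, and therefore $u\sim_r p$. If $|u|<r$, first extend $u$ on the right to a word $u'\in\cL(X)$ of length exactly $r$ (possible for the same reason, $u$ lying inside a bi-infinite point), then pick $w\in A^r$ with $wu'\in\cL(X)$; since $wu$ is a prefix of $wu'$ we get $wu\in\cL(X)$, hence $u\sim_r u'$. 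Either way $u$ is $\sim_r$-related to a word of length $r$, as claimed.

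I do not expect any real obstacle: the only facts used besides the hypothesis are that words of $\cL(X)$ extend inside a shift space (to the left by $r$ symbols, and to the right up to length $r$ when they are short) and that $\cL(X)$ is closed under taking factors. The one mildly fiddly point is the case $|u|<r$, where one must relate $u$ to a right extension of length $r$ rather than to a prefix of itself; this is routine bookkeeping. Note that, unlike the proof of \cref{p:reduction} for $\varphi$-irreducibility, no $|G|$-th power trick is needed here, precisely because $\sim_r$ imposes no constraint on the value of any morphism.
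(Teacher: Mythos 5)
Your proof is correct. The overall shape is the same as the paper's (reduce arbitrary words to length-$r$ representatives, then apply the hypothesis), but your reduction step is genuinely different and in fact more economical. The paper first builds the chain $u=t_0\sim_r t_1\sim_r\cdots\sim_r t_n=v$ between the length-$r$ representatives and then attaches the long words $u',v'$ to the end witnesses, arguing that $w_0u'\in\cL(X)$; for that gluing it must invoke the $r$-step SFT property (all length-$(r+1)$ factors of $w_0u'$ lie in $w_0u$ or in $u'$). You avoid the gluing altogether by choosing the witness $w\in A^r$ as a left extension of the \emph{whole} word $u$ inside a point of $X$, so that $wu\in\cL(X)$ holds by construction and $wp\in\cL(X)$ (respectively $wu\in\cL(X)$ when $|u|<r$ and $wu'\in\cL(X)$) follows from factoriality of $\cL(X)$ alone; this shows every word is $\sim_r$-related in one step to a word of length $r$, using nothing but extendability and factor-closure. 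The price is one extra $\sim_r$ step at each end of the chain, which is harmless for the transitive closure; the gain is that your argument never uses the finite-type hypothesis, so the reduction is valid for an arbitrary shift space, with the SFT assumption entering only through the definition of strong irreducibility (\cref{def:SI}) and the irreducibility clause carried along unchanged. Your closing remark contrasting this with the $|G|$-th power trick in \cref{p:reduction} is also apt.
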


\begin{proof}
    Let $u'$ and $v'$ be words in $\cL(X)$ of arbitrary lengths. Choose some words $u,v$ such that $|u|=|v|=r$, $u$ is a prefix of $u'$ or vice versa, and $v$ is a prefix of $v'$ or vice versa. By assumption, we may find  $n$ words  $w_0,\ldots,w_{n-1}$ in $A^*$,  and $n+1$ words $t_0,\ldots,t_{n}$, also all of length $r$, such that $t_0 = u$, $t_n=v$, {and} 
    \begin{equation*}
        w_it_i, w_it_{i+1} \in \cL(X),\quad 0\leq i\leq n-1. 
    \end{equation*}
    Then notice that $w_0u'$ is also in $\cL(X)$; indeed this is obvious when $u'$ is a prefix of $u$, and otherwise it follows from the fact that $X$ is an $r$-step SFT. Likewise $w_{n-1}v'\in\cL(X)$, which shows that $u'\simeq_r v'$.
\end{proof}

\begin{example}\label{eg:golden-mean-2}
    Continuing from Example~\ref{eg:golden-mean-1}, we claim that the golden mean shift is in fact strongly irreducible.  Indeed, it suffices to notice that in this irreducible $1$-step SFT, for any letters $u$ and $v$ in $A$, one has  $au$ and $av$ in $\cL(X)$, hence $u \sim_1 v$, and by Proposition \ref{prop:si}, $X$ is indeed strongly irreducible.  
    As a result, by Corollary \ref{c:si}, all skew products of the golden mean shift with skewing functions given by morphisms onto finite groups are topologically transitive, including the one in Example~\ref{eg:golden-mean-1}.
\end{example}

\begin{example}\label{eg:not-si}
    Let $X$ be the 1-step SFT on $A = \{a,b,c\}$ formed by the sequences avoiding the words $ca,ab,bb,cc$. Then $X$ is irreducible but not strongly irreducible. Indeed, the relation $\simeq_1$ has two classes in $A\times A$, namely $\{b\}$ and $\{a,c\}$. Taking for instance $C = \{b\}$, the set $B$ in the second part of the proof of Theorem~\ref{t:si} equals~$\{b,c\}$. Consider  now the morphism $\varphi\from A^*\to\Z/2\Z$, $\varphi(a) =0$, $\varphi(b)=\varphi(c)=1$.  One has that $X$ is irreducible, whereas  $\Z/2\Z \skewprod_{\varphi}  X$ is not.  If $w\in A^*$ and $bwa \in  \cL(X)$, we must have  $\varphi(bw)=1$ and  therefore $X$ is not $\varphi$-irreducible. The shift $X$ and the skew product $\Z/2\Z\skewprod_{\varphi}  X$ are depicted in Figure~\ref{f:not-si}.
    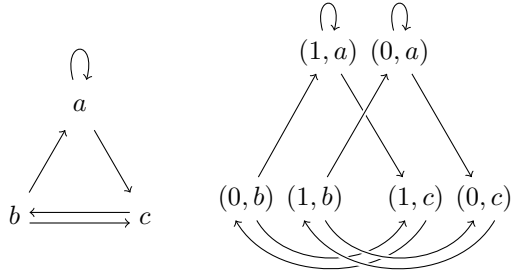
\begin{figure}
        \centering
        \begin{tabular}{cc}
            \begin{tabular}{c}
                \begin{tikzpicture}
                    \node (a) at (90:1)  {\strut$a$} ;
                    \node (b) at (210:1) {\strut$b$} ;
                    \node (c) at (-30:1) {\strut$c$} ;
                    \draw[->] (a) to (c) ;
                    \draw[->] (a) to [loop above] (a) ;
                    \draw[->] ([yshift=2pt]c.west) to ([yshift=2pt]b.east) ;
                    \draw[->] (b) to (a) ;
                    \draw[->] ([yshift=-2pt]b.east) to ([yshift=-2pt]c.west) ;
                \end{tikzpicture}
            \end{tabular}
            &
            \begin{tabular}{c}
                \begin{tikzpicture}[every node/.style = {inner sep =2pt}]
                    \node (0a) at (90: 1.3)    {\strut$(0,a)$} ;
                    \node (1b) at (210:1.3)    {\strut$(1,b)$} ;
                    \node (0c) at (-30:1.3)   {\strut$(0,c)$} ;
                    \node[left = 0 of 0a] (1a) {\strut$(1,a)$} ;
                    \node[left = 0 of 1b] (0b) {\strut$(0,b)$} ;
                    \node[left = 0 of 0c] (1c) {\strut$(1,c)$} ;
                    \draw[->] (0a) to (0c) ;
                    \draw[->] (1a) to (1c) ;
                    \draw[->] (0a) to [loop above] (0a) ;
                    \draw[->] (1a) to [loop above] (1a) ;
                    \draw[->,line width=2pt,white,bend left =60] ([xshift=4pt]1c.south) to ([xshift=-4pt]0b.south) ;
                    \draw[->,bend left =60] ([xshift=4pt]1c.south) to ([xshift=-4pt]0b.south) ;
                    \draw[->] (0b) to (1a) ;
                    \draw[line width=2pt,white,->] (1b) to (0a) ;
                    \draw[->] (1b) to (0a) ;
                    \draw[->,line width=2pt,white,bend right=60] ([xshift=4pt]0b.south) to ([xshift=-4pt]1c.south) ;
                    \draw[->,bend right=60] ([xshift=4pt]0b.south) to ([xshift=-4pt]1c.south) ;
                    \draw[->,line width=2pt,white,bend right=60] ([xshift=4pt]1b.south) to ([xshift=-4pt]0c.south) ;
                    \draw[->,bend right=60] ([xshift=4pt]1b.south) to ([xshift=-4pt]0c.south) ;
                    \draw[->,line width=2pt,white,bend left =60] ([xshift=4pt]0c.south) to ([xshift=-4pt]1b.south) ;
                    \draw[->,bend left =60] ([xshift=4pt]0c.south) to ([xshift=-4pt]1b.south) ;
                \end{tikzpicture}
            \end{tabular}
        \end{tabular}
        \caption{The SFT and skew product from Example~\ref{eg:not-si}}
        \label{f:not-si}
    \end{figure}

    In particular, it follows from Theorem~\ref{t:transitive} that no Markov measure fully supported on $\Z/2\Z\skewprod _{\varphi} X$ (viewed as an SFT as per Lemma~\ref{l:markov}) can be ergodic. This includes every measure of the form $\nu\times\mu$ where $\nu$ is the uniform probability measure on $\Z/2\Z$ and $\mu$ is a Markov measure fully supported on $X$.
\end{example}

\section{A characterization of minimality   via return words}
\label{s:bifix}

We now shift our focus away from shifts of finite type and towards \emph{minimal shift spaces}. We start by giving a first characterization of minimality for skew products (Theorem~\ref{t:minimalskew}), stated in terms of return words (Section~\ref{s:symbolic-dynamics}). Its proof, given in Section~\ref{ss:proof}, strongly relies on the  deep links between the skew products under consideration  and bifix codes, recalled in Section~\ref{ss:codes}. 
 \begin{theorem}\label{t:minimalskew}
    Let $X$ be a minimal shift space on $A$ and $\varphi\colon A^*\to G$ be a morphism onto a finite group $G$. The following conditions
    are equivalent.
    \begin{enumerate}
        \item For every $n>0$ and $x\in X$, $\{\varphi(x_{[0,m)}) \mid m\in\N, x_{[m, m+n)} = x_{[0,n)}\} = G$. \label{i:minimalskew-welldoc}
        \item The skew product $G\skewprod_{\varphi}  X$ is minimal. \label{i:minimalskew-minimal}
        \item For every $u\in\cL(X)$, the restriction of $\varphi$ to $\RR_X(u)^*$ is surjective. \label{i:minimalskew-return}
    \end{enumerate}
\end{theorem}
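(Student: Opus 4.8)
\textbf{Plan for the proof of \cref{t:minimalskew}.} The plan is to prove the cyclic chain of implications \ref{i:minimalskew-welldoc} $\Rightarrow$ \ref{i:minimalskew-minimal} $\Rightarrow$ \ref{i:minimalskew-return} $\Rightarrow$ \ref{i:minimalskew-welldoc}, so each implication carries only one direction of work.

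\emph{\ref{i:minimalskew-welldoc} $\Rightarrow$ \ref{i:minimalskew-minimal}.} Using the conjugacy $\Psi$ of \cref{l:isomskewprod}, I work inside the shift $Y=\Psi(G\skewprod X)$ on the alphabet $G\times A$. To show $G\skewprod X$ is minimal it suffices to show every point is uniformly recurrent, equivalently (since the $X$-projection is already minimal) that for any $x\in X$, $g\in G$, and any $n$, the point $\Psi(g,x)$ returns to the cylinder $[\Psi(g,x)_{[0,n)}]$ with bounded gaps. Minimality of $X$ gives bounded-gap return times $m$ with $x_{[m,m+n)}=x_{[0,n)}$; the extra content is that among those return times we can choose, with bounded gaps, one for which the group coordinate also matches, i.e. $g\varphi(x_{[0,m)})=g$, that is $\varphi(x_{[0,m)})=1_G$. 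Hypothesis \ref{i:minimalskew-welldoc} says all of $G$ is hit by $\varphi(x_{[0,m)})$ over the return times; a compactness/pigeonhole argument then promotes "$1_G$ is hit" to "$1_G$ is hit with bounded gaps" — this is where one must be slightly careful, and I expect this to be the main obstacle: one uses that the set of return times is syndetic and that concatenating two return words multiplies their $\varphi$-values, so that by fixing a finite set of return words whose $\varphi$-values generate $G$ and inserting the appropriate one, any long enough stretch contains a return time landing in $1_G$. (Alternatively, one observes directly that $Y$ is a union of minimal components permuted transitively by the $G$-action, and \ref{i:minimalskew-welldoc} forces a single component.)

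\emph{\ref{i:minimalskew-minimal} $\Rightarrow$ \ref{i:minimalskew-return}.} Fix $u\in\cL(X)$ and a point $x\in X$ with $x_{[0,|u|)}=u$. The successive occurrences of $u$ in $x$ are separated by return words $r^{(1)},r^{(2)},\ldots\in\RR_X(u)$, and $\varphi(x_{[0,m_k)})=\varphi(r^{(1)})\cdots\varphi(r^{(k)})$ where $m_k$ is the position of the $k$-th later occurrence. The orbit of $(1_G,x)$ under $T$ visits $\{1_G\}\times[u]$ exactly at the times $m_k$, and by minimality of $G\skewprod X$ this orbit is dense, hence for every $g\in G$ it visits $\{g\}\times[u]$ as well; but by \cref{l:isomskewprod} the visits to $\{g\}\times[u]$ occur exactly when $\varphi(r^{(1)})\cdots\varphi(r^{(k)})=g$. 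Therefore every element of $G$ is a product of $\varphi$-values of return words to $u$, which is exactly surjectivity of $\varphi$ restricted to $\RR_X(u)^*$.

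\emph{\ref{i:minimalskew-return} $\Rightarrow$ \ref{i:minimalskew-welldoc}.} Fix $n>0$, $x\in X$, and set $u=x_{[0,n)}$. The set $\{\varphi(x_{[0,m)}) : m\in\N,\ x_{[m,m+n)}=u\}$ contains all products $\varphi(r^{(1)})\cdots\varphi(r^{(k)})$ as $r^{(j)}$ ranges over the return words read off consecutively from $x$ starting at position $0$. It is not quite immediate that this set of "prefix-read" products is all of $\RR_X(u)^*$'s image, but minimality of $X$ (uniform recurrence) ensures every return word to $u$ occurs infinitely often, hence appears in the sequence $(r^{(j)})$; more carefully, given any finite product $\varphi(s_1)\cdots\varphi(s_\ell)$ with $s_i\in\RR_X(u)$, one finds an occurrence of the word $s_1 s_2\cdots s_\ell u$ somewhere in $x$, say at position $p$, with $x_{[p,p+n)}=u$, and then appends it: the corresponding prefix $x_{[0,p+|s_1\cdots s_\ell|)}$ has $\varphi$-value $\varphi(x_{[0,p)})\,\varphi(s_1)\cdots\varphi(s_\ell)$, and letting the $s_i$ and the tail vary one sweeps out a full coset-translate of $\img(\varphi|_{\RR_X(u)^*})$; combining with translates from different base positions and using \ref{i:minimalskew-return} (which gives the image is all of $G$) yields that the displayed set is $G$. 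The cleanest route is: \ref{i:minimalskew-return} gives, for the specific $u$, that $\img(\varphi|_{\RR_X(u)^*})=G$; pick for each $g\in G$ a witnessing finite concatenation of return words equal in $\varphi$ to $g$; by uniform recurrence this concatenation (followed by $u$) occurs in $x$ at some position $m$ with $x_{[m,m+n)}=u$, so $\varphi(x_{[0,m)})$ ranges over a set that, together with the group structure and one more application of return-word concatenation to clear the initial segment, exhausts $G$. Closing the cycle completes the proof.
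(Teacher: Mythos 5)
Your step \ref{i:minimalskew-minimal}~$\Rightarrow$~\ref{i:minimalskew-return} is correct and is essentially the paper's argument (visits of the orbit of $(1_G,x)$ to $\{g\}\times[u]_X$ happen exactly at occurrence times of $u$, where the group coordinate is a product of $\varphi$-values of consecutive return words). The other two implications have genuine gaps.

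The serious one is \ref{i:minimalskew-return}~$\Rightarrow$~\ref{i:minimalskew-welldoc}, which is where the real content of the theorem lies. Your argument rests on the claim that, given any finite concatenation $s_1\cdots s_\ell$ of return words to $u$, "by uniform recurrence this concatenation (followed by $u$) occurs in $x$." That is false: $\RR_X(u)^*$ consists of abstract concatenations, and most of them are not factors of $X$, so uniform recurrence says nothing about them. For instance in the Fibonacci shift with $u=a$ one has $\RR_X(a)=\{a,ab\}$, yet $a\cdot a\cdot a=aaa\notin\cL(X)$. Even for those concatenations that do occur, the occurrence position $p$ depends on the chosen word, so the factor $\varphi(x_{[0,p)})$ multiplying your prescribed value is not under your control; the proposed fix ("one more application of return-word concatenation to clear the initial segment", "combining translates from different base positions") is exactly the circular point that needs proof, since the set of prefix-read products along $x$ is only closed under right multiplication by the $\varphi$-value of the \emph{next} return word actually occurring in $x$, not by an arbitrary generator. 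This is precisely why the paper passes to the derived shift $\DD_u(X)$ and invokes the bifix-code machinery (\cref{theoremGroupCodes}, \cref{propositionIndex}): the hypothesis \ref{i:minimalskew-return} is used there for \emph{all} words (via $\theta_u\circ\theta_v=\theta_w$), and the degree/index argument produces $|G|$ prefixes with pairwise distinct $\varphi$-values ending at occurrences of $u$. Your sketch uses \ref{i:minimalskew-return} only at the single word $u$ and supplies no substitute for this argument.

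In \ref{i:minimalskew-welldoc}~$\Rightarrow$~\ref{i:minimalskew-minimal}, your main route reduces minimality to "every point of $G\skewprod X$ is uniformly recurrent" (given that the base is minimal). That criterion is false: in \cref{eg:periodicskewprod-1} the base is minimal and every point of the skew product is periodic, hence uniformly recurrent, yet $\Z/2\Z\skewprod X$ is not minimal — indeed, by \cref{prop:finitecompo} every point of such a skew product always lies in a minimal subset, so uniform recurrence gives no information. (Your notion of "inserting" a suitable return word into $x$ is also not available, since $x$ is a fixed point of the shift.) Your parenthetical alternative — the skew product is a union of minimal components permuted by $G$, and \ref{i:minimalskew-welldoc} forces a single component — is the right idea and is how the paper argues (any minimal subset $Y$ contains some $(h,x)$, and \ref{i:minimalskew-welldoc} lets one push $(h,x)$ by $T^{m_n}$ to approximate $(g,x)$ for every $g$), but you do not carry it out, and as written your primary argument does not prove minimality.
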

We will later provide a  further characterization of minimality  in Theorem~\ref{theo:minimal}, stated in terms of cobounding maps, in the flavor of Anzai's theorem on ergodicity of skew products~\cite{Anzai1951}. We will also provide an alternate proof of the equivalence between \ref{i:minimalskew-minimal} and \ref{i:minimalskew-return} (Remark~\ref{r:alternate}).

\begin{remark} 
    The condition \ref{i:minimalskew-welldoc} is reminiscent of the welldoc property (which stands for \emph{well distributed occurrences}) studied by Balkov\'{a} et al.~\cite{Balkova2016} in the context of pseudorandom number generators. For convenience, we recall that an infinite word $X$ is said to have the welldoc property when it satisfies the following condition where $\ab$ denotes the Parikh Abelianization map $A^*\to \Z^{|A|}$:
    \begin{equation*}
        \forall k\geq 2, \forall n\geq 0,\ \{ \ab(x_{[0,m)}) \bmod k \mid m\geq 0, x_{[m,m+n)}=x_{[0,n)} \} = (\Z/k\Z)^{|A|}.
    \end{equation*}
 
    It follows from the above theorem that the welldoc property is equivalent to all skew products of the form $(\Z/k\Z)^d\skewprod_{\varphi}  X$ being ergodic.
\end{remark}

\subsection{More on the theory of codes}\label{ss:codes}

We survey some basic notions from the theory of codes, some of which already appeared in Section~\ref{sec:Bernoulli}. Again we refer to \cite{BerstelPerrinReutenauer2009} for additional details.

Recall that a \emph{prefix code} is a subset $U\subseteq A^*$ where no word is a strict prefix of another, and likewise a \emph{suffix code} is a subset of $A^*$ where no word is a strict suffix of another. Let $X$ be a shift space. A prefix code $U\subseteq\cL(X)$ is called \emph{$X$-complete} if every word $w\in\cL(X)$ is either a prefix of a word in $U$ or has a prefix which is an element of $U$; replacing \emph{prefix} by \emph{suffix} in this definition, we obtain the corresponding notion of \emph{$X$-complete suffix code}.

\begin{example}
  Let $X$ be the Fibonacci shift considered in Section~\ref{ss:fibo}. Then the set $U=\{a,ba\}$ is an $X$-complete prefix code, though clearly not a suffix code.
\end{example}

We recall that a set $U$ is a \emph{bifix code} if it is both a prefix and a suffix code. It is $X$-complete if it is both an $X$-complete prefix code and an $X$-complete suffix code. When $X=A^\Z$, we simply say \emph{complete} instead of \emph{$A^\Z$-complete}, be it for prefix, suffix, or bifix codes.

Let $\varphi\colon A^*\to G$ be a morphism onto a finite group $G$. 
If $H\leq G$ is a subgroup, then the submonoid $M=\varphi^{-1}(H)$ of $A^*$ is  uniquely generated by a bifix code $Z$ (an argument is given in~\cite[p.~64]{BerstelPerrinReutenauer2009}), which is also a group code (as introduced in Section~\ref{sec:Bernoulli}). It is in fact a complete bifix code.

Let $Z$ be a group code. Let $P$ (resp. $S$) be the set of proper prefixes (resp. suffixes) of the words in $Z$. Note that $P$ (resp. $S$) is also the set of words which have no prefix (resp. suffix) in $Z$. The \emph{$Z$-degree} $d(u)$ of a word $u\in A^*$ is  then defined as any of the following numbers, which all coincide~\cite[Proposition 6.1.6]{BerstelPerrinReutenauer2009}:
\begin{enumerate}
    \item the number of suffixes of $u$ which are in $P$;
    \item the number of prefixes of $u$ which are in $S$;
    \item the number of \emph{$Z$-parses} of $u$, that is, the number of triples $(s,z,p)$ such that $u=szp$ with $s\in S$, $z\in Z^*$ and $p\in P$.
\end{enumerate}
It follows from the third definition that for every $u, v, w\in A^*$, the  $Z$-degrees of $u$ and $uvw$ satisfy
\begin{equation}
    d(v)\le d(uvw).\label{eq:degree}
\end{equation}

Indeed, if $(s,z,p)$ is a $Z$-parse of $v$, let $us=s'z'$ with $s'\in S$ and $z'\in Z^*$, and $pw=z''p'$ with $z''\in Z^*$ and $p'\in P$. Then $(s'z'zz'',p')$ is a $Z$-parse of $uvw$ which extends $(s,z,p)$. This shows that every parse of $v$ extends to a parse of $uvw$.

\begin{proposition}\label{p:degree}
    Let $\varphi\from A^*\to G$ be a morphism onto a finite group $G$, and $H$ be a subgroup of index $d$ in $G$. Let $Z$ be the group code such that $\varphi^{-1}(H) = Z^*$, and let $S$ be the set of proper suffixes of elements of $Z$. For every $p,q\in S$ such that $q$ is a proper prefix of $p$, $\varphi(p)H\neq\varphi(q)H$. In particular, every word has $Z$-degree at most $d$.
\end{proposition}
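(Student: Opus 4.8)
The plan is to exploit the characterization of $Z$-degree as the number of proper suffixes lying in $P$ (item (i) in the list preceding the statement). Concretely, I would first argue that if $p, q \in S$ with $q$ a proper prefix of $p$, then $q$ is also a proper suffix of $p$: both $p$ and $q$ are suffixes of words in $Z$, and using the recursive/prefix structure of the group code $Z$ one sees that the proper suffixes of a fixed suffix of a word in $Z$ are again in $S$. (Here one needs the elementary fact that $S$ is closed under taking suffixes, which follows directly from the definition of $S$ as the set of proper suffixes of elements of $Z$, together with the fact that $Z$ is a suffix code so that no element of $Z$ is a proper suffix of another.) Hence the first claim, $\varphi(p)H \ne \varphi(q)H$, is really a statement about two nested suffixes of a single word.

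Next I would prove the cleaner reformulation: \emph{distinct proper suffixes $q \subsetneq p$ of a given word $u$, both lying in $S$, have $\varphi(q)H \ne \varphi(p)H$.} Write $p = tq$ with $t \ne \varepsilon$. Suppose toward a contradiction that $\varphi(p)H = \varphi(q)H$, i.e. $\varphi(q)^{-1}\varphi(p) = \varphi(t) \cdot (\text{something})$... more precisely $\varphi(t) \in H \varphi(q) \varphi(q)^{-1}$; let me instead phrase it as $\varphi(tq) H = \varphi(q)H$. Since $q \in S$ means $q$ has no suffix in $Z$ (equivalently $q \in P$ under the palindromic-type symmetry — careful here), the key point is that $\varphi(q)H = H$ would force $q \in \varphi^{-1}(H) = Z^*$, and since $q$ has no nonempty prefix in $Z$ this gives $q = \varepsilon$; more generally, the relation $\varphi(tq)H = \varphi(q)H$ should be massaged into showing some nonempty factor of a word in $Z$ already lies in $Z^*$, contradicting that $Z$ generates $Z^*$ freely as a bifix code with $S, P$ the proper suffix/prefix sets. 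I would make this rigorous by choosing a word $z \in Z$ of which $p$ is a proper suffix, writing $z = p'p$ with $p' \ne \varepsilon$, and analyzing the $Z$-parses of $z$: the proper suffixes of $z$ in $P$ are exactly counted by $d(z)$, and the cosets $\varphi(\text{suffix})H$ must be pairwise distinct precisely because otherwise a shorter suffix-difference would already be a "return" into $Z^*$ inside a single code word.

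Finally, the bound $d(u) \le d$ for every word $u$: by definition (i), $d(u)$ equals the number of proper suffixes of $u$ that lie in $P = S$ (note $P$ here is the proper-prefix set; I should double check whether the counting suffixes use $P$ or $S$ — item (i) says suffixes in $P$). Order these suffixes by length, $q_1 \subsetneq q_2 \subsetneq \cdots \subsetneq q_k$; by the claim just proved, the cosets $\varphi(q_1)H, \ldots, \varphi(q_k)H$ are pairwise distinct elements of $H\backslash G$, which has cardinality $[G:H] = d$. Hence $k \le d$, i.e. $d(u) \le d$. Combined with \eqref{eq:degree}, this shows the $Z$-degree of every word is at most $d$, as claimed.

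The main obstacle I anticipate is the middle step: cleanly deriving the contradiction from $\varphi(p)H = \varphi(q)H$ for nested suffixes. The temptation is to say "then $p q^{-1}$... acts trivially", but $A^*$ is a free monoid, not a group, so one cannot literally cancel; the honest argument has to go through the combinatorics of $Z$-parses of a word in $Z$ that has $p$ as a suffix, using that $Z$ is a \emph{maximal} (complete) bifix code so that such a word exists and its parses are rigidly controlled. I would lean on the standard fact (from \cite{BerstelPerrinReutenauer2009}) that for a group code $Z = \varphi^{-1}(H) \cap (\text{primitive generators})$, a word $w$ lies in $Z^*$ iff $\varphi(w) \in H$, and that the proper suffixes of a single $z \in Z$ map to $[G:H]$ distinct right cosets of $H$ — which is essentially the statement to be proved, so the real content is getting this "base case" for a single code word, after which the general case follows by \eqref{eq:degree}-type extension. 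If this turns out delicate, an alternative is an induction on $|p|$ using the defining recursion of the group code (each $z \in Z$ is a letter times a shorter element of the relevant structure), peeling off one letter at a time and tracking the coset $\varphi(\cdot)H$.
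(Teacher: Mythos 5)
There is a genuine gap, and it sits exactly at the step you yourself flag as delicate. Your opening reduction is incorrect: from $p,q\in S$ with $q$ a \emph{proper prefix} of $p$ you cannot conclude that $q$ is a proper suffix of $p$ (e.g.\ in \cref{eg:bifix-code}, $p=ba$ and $q=b$ both lie in $S$, and $q$ is a prefix but not a suffix of $p$); the suffix-closure of $S$ that you invoke is true but irrelevant to that claim. Worse, the ``cleaner reformulation'' to suffix-nested pairs is not actually cleaner: if $p=tq$ with $q$ a suffix, then $\varphi(p)H=\varphi(q)H$ only gives $\varphi(t)\in\varphi(q)H\varphi(q)^{-1}$, a conjugate of $H$, so the cancellation you want is unavailable. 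The prefix hypothesis in the statement is precisely what makes the argument work, and you never exploit it: writing $p=qx$ with $x\neq\varepsilon$, the equality $\varphi(qx)H=\varphi(q)H$ cancels on the left to give $\varphi(x)\in H$, i.e.\ $x\in\varphi^{-1}(H)=Z^*$. That is the engine of the paper's (short) proof: $x$ is a nonempty element of $Z^*$, hence has a suffix $r\in Z$, say $x=sr$; since $p\in S$ there is $z\in Z$ with $z=tp$, $t\neq\varepsilon$, so $z=tqsr$ exhibits $r\in Z$ as a \emph{proper} suffix of $z\in Z$, contradicting that $Z$ is a suffix code. Your proposal instead defers the contradiction to an unspecified analysis of $Z$-parses inside a single code word and concedes that the needed ``base case'' is essentially the statement to be proved, so the key idea is missing rather than merely unpolished.

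The final counting step also needs repair. You count via characterization (i) (suffixes of $u$ lying in $P$) and write ``$P=S$'', which is false in general ($P$ is the set of words with no prefix in $Z$, $S$ the set of words with no suffix in $Z$); and the chain $q_1\subsetneq\cdots\subsetneq q_k$ of suffixes of $u$ is nested for the \emph{suffix} order, to which the proposition (about prefix-nested pairs) does not apply. The correct route is characterization (ii): the prefixes of $u$ lying in $S$ are linearly ordered by the prefix relation, so by the first part of the proposition they map to pairwise distinct cosets $\varphi(\cdot)H$, of which there are $d=[G:H]$; hence $d(u)\leq d$. (The monotonicity \eqref{eq:degree} is not needed here.)
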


\begin{proof}
    Let $p = qx$ and assume by contradiction that $\varphi(p)H=\varphi(q)H$. Then it follows that $x$ is in $\varphi^{-1}(H)$, hence it is a nonempty word of $Z^*$, and thus it has a suffix in $Z$. Set $x=sr$ with $r\in Z$. Let $z\in Z$ be such that $z=tp$. Then,
    \begin{equation*}
        z=tp=tqx=tqsr,
    \end{equation*}
    so $r\in Z$ is a proper suffix of $z\in Z$, which contradicts the fact that $Z$ is bifix.
\end{proof}

Let $X$ be a minimal shift space and let $U=Z\cap \cL(X)$. The \emph{$X$-degree} of $U$, denoted $d_X(U)$, is the maximal value of the $Z$-degrees of all words in $\cL(X)$. The following is essentially a reformulation of \cite[Theorem~4.2.11]{BerstelDeFelicePerrinReutenauerRindone2012}; we include a proof for the convenience of the reader.
\begin{theorem}\label{theoremGroupCodes}
    Let $X$ be a minimal shift space on $A$ and $\varphi\from A^*\to G$ be a morphism onto a finite group $G$. Let $H$ be a subgroup of index $d$ in $G$ and $Z$ be the group code such that $\varphi^{-1}(H) = Z^*$. The set $U=Z\cap \cL(X)$ is a finite $X$-complete bifix code of $X$-degree $d_X(U)\leq d$.
\end{theorem}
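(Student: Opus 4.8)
The plan is to show three things about $U = Z \cap \cL(X)$: that it is finite, that it is $X$-complete as a bifix code, and that its $X$-degree is at most $d$. Since $Z$ is a complete bifix code (being a group code), $U = Z \cap \cL(X)$ is automatically a bifix code; the content is in the completeness and finiteness assertions, both of which should follow from minimality of $X$ together with \cref{p:degree}.

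First I would dispatch the $X$-degree bound: by \cref{p:degree}, every word in $A^*$ has $Z$-degree at most $d$, so \emph{a fortiori} every word in $\cL(X)$ does, whence $d_X(U) \leq d$ directly from the definition of $X$-degree. Next, for finiteness, I would argue that a word $z \in Z$ that is long enough must contain a factor from $\cL(X)$ whose $Z$-degree forces a contradiction — more precisely, using \eqref{eq:degree}, the $Z$-degree is monotone under taking superwords, and a word in $Z$ has exactly one $Z$-parse (namely $(\varepsilon, z, \varepsilon)$), so its $Z$-degree is $1$; but I want to bound its \emph{length}. The cleaner route: by minimality, $X$ is uniformly recurrent, so there is a bound $N$ such that every word of length $N$ in $\cL(X)$ contains all words of $\cL(X)$ of some fixed small length as factors; combined with the fact (from \cref{p:degree}, via the set $S$ of proper suffixes of $Z$) that $S \cap \cL(X)$ is finite — since the $Z$-degree, equivalently the number of suffixes lying in $P$, is bounded by $d$, and by minimality a long enough word would accumulate too many distinct such suffixes — one concludes $Z \cap \cL(X)$ is finite. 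I expect the precise packaging of this finiteness argument to be the main obstacle, as it requires carefully linking the combinatorial degree bound to uniform recurrence.

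For $X$-completeness, I would show that every $w \in \cL(X)$ either is a prefix of some word in $U$ or has a prefix in $U$ (and dually for suffixes). Given $w \in \cL(X)$, by minimality $w$ extends on the right within $\cL(X)$ to arbitrarily long words; since $\varphi$ takes values in the finite group $G$, along such an extension the value $\varphi$ of the prefix returns to the coset $H$ infinitely often, so some extension $wu' \in \cL(X)$ lies in $\varphi^{-1}(H) = Z^*$. Then $wu'$ factors as a product of elements of $Z$, each of which lies in $\cL(X)$ (being a factor of an element of $\cL(X)$), hence in $U$; so either the first such factor is a prefix of $w$, or $w$ is a prefix of that first factor, giving $X$-completeness of $U$ as a prefix code. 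The suffix side is symmetric, using left extensions. Finally I would remark that $U$ being a finite $X$-complete prefix code and a finite $X$-complete suffix code means it is a finite $X$-complete bifix code, completing the proof; a citation to \cite{BerstelDeFelicePerrinReutenauerRindone2012}, Theorem 4.2.11, can be used to streamline or cross-check the finiteness and completeness claims.
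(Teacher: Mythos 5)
Your bound $d_X(U)\le d$ via \cref{p:degree} is fine, but both of the substantive claims (finiteness and $X$-completeness) rest on steps that are not actually justified. For finiteness: the aside that a word $z\in Z$ has exactly one $Z$-parse, hence $Z$-degree $1$, is false in general (for the length-parity morphism onto $\Z/2\Z$ with $H$ trivial, $Z$ consists of all two-letter words and $d(aa)=2$). More seriously, the assertion that $S\cap\cL(X)$ is finite is essentially the suffix half of what has to be proved, and the reason you give --- that by minimality a long word would ``accumulate too many distinct such suffixes,'' contradicting the bound $d$ --- does not work: \cref{p:degree} caps the number of suffixes lying in $P$ (equivalently, prefixes lying in $S$) at $d$ for \emph{every} word, so no contradiction can be manufactured this way, and membership of a long word in $S$ only says something about its suffixes relative to $Z$ (indeed all its suffixes are again in $S$), which is not in tension with the degree bound. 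The missing idea is the one the paper uses: pick $v\in\cL(X)$ of maximal $Z$-degree; by \cref{lemmaMaximalDegree} no element of $Z$ can contain $v$ as an internal factor, while by uniform recurrence every sufficiently long word of $\cL(X)$ does, which bounds the lengths of the elements of $U$.

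For $X$-completeness, the pivotal sentence ``since $\varphi$ takes values in the finite group $G$, along such an extension the value of the prefix returns to the coset $H$'' is a non sequitur: pigeonhole in a finite group only yields that \emph{some} value or coset recurs along the extension, not that the cumulative value ever lands in $H$. The claim you need is true, but it uses recurrence of $w$ itself rather than mere extendability: take $x\in[w]_X$, choose occurrences $n_0=0<n_1<n_2<\cdots$ of $w$ in $x$ spaced at least $|w|$ apart (uniform recurrence), and apply pigeonhole to the finitely many values $\varphi(x_{[0,n_i)})$ to get $i<j$ with $\varphi(x_{[n_i,n_j)})=1_G\in H$; the word $x_{[n_i,n_j)}$ is then an extension of $w$ lying in $\cL(X)\cap Z^*$. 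Once this is supplied, your factorization of $wu'$ into elements of $Z$ (each a factor of a word of $\cL(X)$, hence in $U$) does give $X$-completeness, and this is a genuinely different route from the paper's, which instead appends a maximal-degree word $v$ and uses the equality $d(au'wv)=d(u'wv)$ to force a prefix in $Z$. Finally, note that falling back on Theorem~4.2.11 of \cite{BerstelDeFelicePerrinReutenauerRindone2012} cannot serve as the proof here, since the statement under review is precisely a reformulation of that result which the paper sets out to prove directly.
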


The proof makes use of the following simple observation.
\begin{lemma}\label{lemmaMaximalDegree}
    Let $v$ be a word with maximal $Z$-degree. Then no word of $U$ can be of the form $uvw$ with $u$ or $w$ nonempty.
\end{lemma}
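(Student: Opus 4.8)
The plan is to prove \cref{lemmaMaximalDegree} by contradiction, exploiting the key inequality \eqref{eq:degree} together with the bound on $Z$-degree from \cref{p:degree}. Suppose $v$ has maximal $Z$-degree $d_X(U)$ and that some word $uvw\in U$ with, say, $u\neq\varepsilon$ (the case $w\neq\varepsilon$ is symmetric, or one argues using suffixes instead of prefixes). First I would observe that since $uvw\in U\subseteq\cL(X)$, the word $uvw$ itself lies in $\cL(X)$, so by maximality of $d(v)$ and by \eqref{eq:degree} we get $d(v)\leq d(uvw)\leq d(v)$, whence $d(uvw)=d(v)=d_X(U)$, meaning every $Z$-parse of $v$ extends uniquely to a $Z$-parse of $uvw$ and there are no new parses.

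Next I would extract the contradiction from the fact that $uvw$ belongs to the group code $Z$ itself, not merely to $Z^*$. Being an element of $Z$, the word $uvw$ has exactly one $Z$-parse, namely $(\varepsilon, uvw, \varepsilon)$ — it has no proper prefix in $S$ other than... more precisely, recall that $P$ is the set of proper prefixes of words in $Z$ and $S$ the set of proper suffixes; I would use the characterization of $Z$-degree as the number of suffixes of $uvw$ lying in $P$ (equivalently prefixes in $S$). Since $v$ is a factor of $uvw$ with $u\neq\varepsilon$, I would argue that $d(uvw)$ must be strictly larger than $d(v)$: the parses of $v$ correspond to suffixes of $v$ in $P$, but $uvw$ has additional suffixes in $P$ coming from the fact that $v w$ is a proper suffix of $uvw$ and lies in $P$ (as $uvw\in Z$ and $u\neq\varepsilon$), yet $vw$ need not be a suffix of $v$. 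Concretely, since $uvw\in Z$ and $u$ is a nonempty proper prefix, the suffix $vw$ of $uvw$ is a proper suffix of an element of $Z$, hence $vw\in S$; but then, reading via the "prefixes in $S$" count applied to the reversal, or directly: the $Z$-degree of $uvw$ counts among its suffixes-in-$P$ at least all those of $v$ plus the new suffix $vw$ (if $w=\varepsilon$, the suffix $v$ itself paired with the fact that $uv\in Z$ forces $v\in S$, giving an extra parse). This strict inequality contradicts $d(uvw)=d(v)$.

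The main obstacle I anticipate is getting the bookkeeping of parses exactly right — specifically, showing cleanly that the factor $v$ sitting strictly inside $uvw\in Z$ forces at least one genuinely new $Z$-parse of $uvw$ beyond the extensions of parses of $v$. The cleanest route is probably: since $uvw\in Z$, one has $uvw\notin Z^*Z^+$ nontrivially, so by \cref{p:degree} (or rather its proof technique) the word $uvw$ has $Z$-degree... actually the sharpest tool is that a word in $Z$ has $Z$-degree exactly $1$ is \emph{false} in general, so instead I would lean on: every proper nonempty prefix $q$ of $uvw$ satisfies $\varphi(q)\notin H$ (by definition of the group code $Z$), so for each such $q$, $q\in S$ or $q\in P$ cannot both... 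The safe and likely intended argument is simply: if $uvw\in U$ with $u$ nonempty, pick a $Z$-parse $(s,z,p)$ of $v$; then by the construction in the paragraph after \eqref{eq:degree}, this extends to a parse $(s'z'zz'',p')$ of $uvw$ with $s'z'$ coming from parsing $us$; since $u\neq\varepsilon$ and $uvw\in Z$ is itself a single $Z$-word, $u s$ must contribute a nonempty $z'\in Z^+$, which... I would instead conclude directly that $uvw\in Z$ together with $u\neq\varepsilon$ means $uvw$ has a proper nonempty prefix, and one can choose the parse so that the empty parse $(\varepsilon,\varepsilon,u)$ of the prefix ... Rather than belabor this, the final write-up should just invoke \eqref{eq:degree} applied to $u'vw'$ where $u'$ is a nonempty proper prefix of $u$ (or $u$ itself split off), obtaining a parse of $uvw$ with a parse of $vw$ or $uv$ that is \emph{not} an extension of a parse of $v$ because it uses a $Z$-factor straddling the boundary — forcing $d(uvw)>d(v)$, the desired contradiction.
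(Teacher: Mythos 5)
Your write-up never actually closes the crucial step, and the places where you hesitate are exactly where the difficulty lies. Two concrete problems. First, the assertion that an element of $Z$ has exactly one $Z$-parse, namely $(\varepsilon,uvw,\varepsilon)$, is false: for $\varphi\from\{a,b\}^*\to\Z/2\Z$ with $\varphi(a)=1$, $\varphi(b)=0$ and $Z^*=\varphi^{-1}(0)$, the codeword $aa$ has the two parses $(\varepsilon,aa,\varepsilon)$ and $(a,\varepsilon,a)$. The suffix bookkeeping that follows is also off: when $w\neq\varepsilon$ the suffixes of $v$ are not suffixes of $uvw$, so one cannot compare $d(v)$ and $d(uvw)$ by directly counting suffixes in $P$ (nor does $vw\in S$ contribute to that count); the only available comparison is the parse-extension map used to prove \eqref{eq:degree}. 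Second, and this is the crux, you never prove that some parse of $uvw$ fails to be an extension of a parse of $v$ --- your final paragraph explicitly defers that point. The paper's proof is precisely the one-line argument you were circling: every parse of $v$ extends to a parse of $uvw$, while $(\varepsilon,uvw,\varepsilon)$ is an additional one, so $d(uvw)>d(v)$. To make this rigorous one argues: if $(\varepsilon,uvw,\varepsilon)$ were the extension of a parse $(s,z,p)$ of $v$, then $us\in Z^*$ and $pw\in Z^*$; when $u$ \emph{and} $w$ are both nonempty, $us$ is a nonempty proper prefix of $uvw$ lying in $\varphi^{-1}(H)=Z^*$, contradicting the definition of the group code $Z$.

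Your unease about the case $w=\varepsilon$ was well founded, but the fix you sketch (extracting an ``extra parse'' from $v\in S$) cannot work: in that case $(\varepsilon,uvw,\varepsilon)$ \emph{is} the extension of the parse $(v,\varepsilon,\varepsilon)$ of $v$, so no new parse appears, and indeed with only one of $u,w$ nonempty the conclusion can fail outright. In the example above the letter $a$ has maximal $Z$-degree $2$, yet $aa\in Z$ and $d(aa)=d(a)=2$; for the Thue--Morse shift one even has $aa\in U=\{b,aa,aba,abba\}$ (cf.\ \cref{eg:thue-morse-1bis}). So the statement must be understood, as in the classical theory of maximal bifix codes and as in its uses in the proofs of \cref{theoremGroupCodes} and \cref{propositionIndex}, with $u$ and $w$ \emph{both} nonempty, i.e.\ $v$ is not an internal factor of a word of $U$; under that hypothesis the argument indicated above completes your approach, which is then essentially the paper's.
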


\begin{proof}
    Assume the contrary. Then we would have $d(uvw)>d(v)$ because every $Z$-parse of $v$ extends to a $Z$-parse of $uvw$, while the latter has the additional $Z$-parse $(\varepsilon,uvw,\varepsilon)$.
\end{proof}

\begin{proof}[Proof of Theorem~\ref{theoremGroupCodes}]
    Since $Z$ is a bifix code, the same is true for $U$. Consider a word $v\in\cL(X)$ of maximal $Z$-degree. By Lemma~\ref{lemmaMaximalDegree}, there is no word in $U$ containing $v$ as a strict factor, thus $U$ must be finite. Indeed, since $X$ is minimal, every long enough element of $\cL(X)$ is of the form $uvw$ with $u,w$ nonempty and thus cannot be a factor of a word in $U$; hence the length of the words in $U$ is bounded.

    Next, we show that $U$ is an $X$-complete prefix code. Consider a nonempty word $u\in \cL(X)$. Since $X$ is minimal, there is a word $w$ such that $uwv\in\cL(X)$. Set $u=au'$ where $a$ is a letter. Since $d(au'wv)\ge d(u'wv)\ge d(v)$ and since $d(v)$ is maximal; we have $d(au'wv)=d(u'wv)$. This forces the word $au'wv$ to have a prefix in $Z$, hence in $U$. Thus either $u$ has a prefix which is an element of $U$ or it is a prefix of an element of $U$. Hence $U$ is an $X$-complete prefix code. The proof that $U$ is an $X$-complete suffix code is similar.
\end{proof}

\begin{example}
    \label{eg:bifix-code}
    Let $X$ be the Fibonacci shift on $A=\{a,b\}$ (as in Section~\ref{ss:fibo}) and let $\varphi\colon A^*\to S_3$ be the morphism onto the symmetric group $S_3$ defined by $\varphi(a)=(1\,2),\varphi(b)=(1\,3)$, where permutations are written in usual cycle notation.  Note that $S_3$ is isomorphic to the group of matrices  $G(2)$ considered in Example \ref{ex:convergents}. Let $H$ be the subgroup of $G$ formed of the permutations fixing $1$. Let $Z$ be the group code such that $\varphi^{-1}(H)=Z^*$; it is given by $Z = ab^*a\cup ba^*b$. The elements of the infinite set $Z$ are represented in the tree found in Figure~\ref{f:bifix} as the labels of paths from the root to the leaves. The $X$-complete bifix code $U=Z\cap\cL(X)$, equal to $\{aa,aba,baab,bab\}$, is depicted in Figure~\ref{f:bifix}.
    \begin{figure}[hbt]
        \centering
        \tikzset{node/.style={circle,draw,minimum size=0.4cm,inner sep=0pt}}
        \begin{tikzpicture}
            \node[node] (1) at (0,0) {$1$};
            \node[node, above right = 1em and 2em of 1] (a) {$2$};
            \node[node, above right = 1em and 2em of a,double] (aa) {$1$};
            \node[node, right       = 3em of a] (ab) {$2$};
            \node[node, above right = 1em and 2em of ab,double] (aba) {$1$};
            \node[node, right       = 3em of ab] (abb) {$2$};
            \node[node, above right = 1em and 2em of abb] (abba) {$1$};
            \node[node, right       = 3em of abb,label=right:{$\cdots$}] (abbb) {$2$};
            \node[node, below right = 1em and 2em of 1] (b) {$3$};
            \node[node, right       = 3em of b] (ba) {$3$};
            \node[node, below right = 1em and 2em of b] (bb) {$1$};
            \node[node, right       = 3em of ba] (baa) {$3$};
            \node[node, below right = 1em and 2em of ba,double] (bab) {$1$};
            \node[node, right       = 3em of baa,label=right:{$\cdots$}] (baaa) {$3$};
            \node[node, below right = 1em and 2em of baa,double] (baab) {$1$};
      
            \draw[->,auto] (1) edge node {$a$} (a);
            \draw[->,auto] (1) edge [swap] node {$b$} (b);
            \draw[->,auto] (a) edge node {$a$} (aa);
            \draw[->,auto] (a) edge [swap] node {$b$} (ab);
            \draw[->,auto] (b) edge node {$a$} (ba);
            \draw[->,auto] (ab) edge node {$a$} (aba);
            \draw[->,auto] (ba) edge node {$a$} (baa);
            \draw[->,auto] (ba) edge [swap] node {$b$} (bab);
            \draw[->,auto] (baa) edge [swap] node {$b$} (baab);
            \draw[->,auto,densely dotted] (b) edge [swap] node {$b$} (bb);
            \draw[->,auto,densely dotted] (ab) edge [swap] node {$b$} (abb);
            \draw[->,auto,densely dotted] (abb) edge node {$a$} (abba);
            \draw[->,auto,densely dotted] (abb) edge [swap] node {$b$} (abbb);
            \draw[->,auto,densely dotted] (baa) edge node {$a$} (baaa);
    \end{tikzpicture}
    \caption{Representation of the $X$-complete bifix code $U$ of Example~\ref{eg:bifix-code}. Nodes are labeled by the image of 1 under the permutation given by the label of the path. Elements of $U$ correspond to paths ending in double-circled nodes}\label{f:bifix}
    \end{figure}
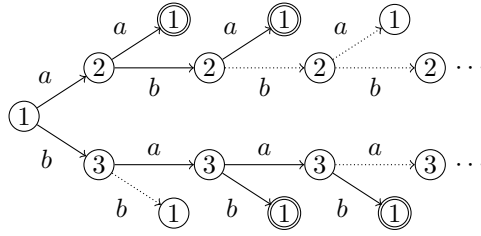
\end{example}

The next result uses ideas found in the proof of the main result of \cite{BertheDeFeliceDolceLeroyPerrinReutenauerRindone2015c}.

\begin{proposition}\label{propositionIndex}
    Let $X$ be a minimal shift space on $A$ and $\varphi\colon A^*\to G$ be a morphism onto a finite group $G$. Let $H\leq G$ be a subgroup of $G$, $Z$ be the group code such that $\varphi^{-1}(H)=Z^*$, and $U = Z\cap \cL(X)$. Assume that there exists some word $u\in\cL(X)$ with maximal $Z$-degree such that $\varphi(\RR_X(u)^*)H=G$. Then the $X$-degree of $U$ is $[G:H]$.
\end{proposition}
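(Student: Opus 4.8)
The plan is to reduce the $Z$-degree to a purely group-theoretic quantity and then use the return-word hypothesis to force that quantity to be maximal. For $u\in A^*$ let $\Pi(u)=\{\varphi(u_{[0,i)})\mid 0\le i\le|u|\}\subseteq G$ be the set of images of the prefixes of $u$ (so $1_G\in\Pi(u)$, from the empty prefix). Because $Z$ is the group code with $Z^*=\varphi^{-1}(1_G)$, the set $P$ of proper prefixes of words in $Z$ is exactly the set of words having no nonempty prefix in $\varphi^{-1}(1_G)$. Hence a suffix $u_{[i,|u|)}$ of $u$ lies in $P$ if and only if $\varphi(u_{[0,i)})\ne\varphi(u_{[0,j)})$ for every $j$ with $i<j\le|u|$, i.e. if and only if $i$ is the last index at which the sequence of prefix-images of $u$ takes the value $\varphi(u_{[0,i)})$. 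Counting such indices by value, one gets the identity $d(u)=|\Pi(u)|$ for every $u\in A^*$. In particular $d_X(U)=\max_{u\in\cL(X)}|\Pi(u)|$, and by \cref{theoremGroupCodes} (applied to the trivial subgroup, which has index $|G|$) this maximum is at most $|G|$; so it remains to exhibit a word of $\cL(X)$ whose prefixes have images covering all of $G$.

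For this, fix $w\in\cL(X)$ with $d(w)=d_X(U)=:e$, and set $\Gamma=\Pi(w)$, so that $1_G\in\Gamma$ and $|\Gamma|=e$. The key point is that for every return word $r\in\RR_X(w)$, the word $rw$ belongs to $\cL(X)$ and has $w$ both as a prefix and as a suffix. Reading off $\Pi(rw)$ accordingly: the prefixes of $rw$ of length at most $|w|$ are prefixes of $w$ (since $rw\in wA^*$), contributing $\Gamma$; the prefixes of length at least $|r|$ are $r$ followed by a prefix of $w$, contributing $\varphi(r)\Gamma$. Therefore $\Pi(rw)\supseteq\Gamma\cup\varphi(r)\Gamma$. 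On the other hand $e=d(w)\le d(rw)\le d_X(U)=e$ by \eqref{eq:degree}, so $|\Pi(rw)|=e$; since $\Gamma$ and $\varphi(r)\Gamma$ are both $e$-element subsets of the $e$-element set $\Pi(rw)$ and $\Gamma\subseteq\Pi(rw)$, they must coincide, giving $\varphi(r)\Gamma=\Gamma$. Thus $\varphi(r)$ lies in the subgroup $\{g\in G\mid g\Gamma=\Gamma\}$ for every $r\in\RR_X(w)$, whence $\varphi(\RR_X(w)^*)\subseteq\{g\in G\mid g\Gamma=\Gamma\}$. By hypothesis $\varphi$ is surjective on $\RR_X(w)^*$, so $g\Gamma=\Gamma$ for all $g\in G$; combined with $1_G\in\Gamma$ this forces $\Gamma=G$, i.e. $e=|G|$.

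The one point that needs a little care is the identity $d(u)=|\Pi(u)|$, which I expect to be the main (mild) obstacle: it relies on the explicit description of $P$ that comes from $Z^*=\varphi^{-1}(1_G)$, after which everything is a short counting argument. It is worth noting what each assumption contributes: minimality of $X$ enters only through the existence of return words (and through \cref{theoremGroupCodes}), and the whole role of the hypothesis ``$\varphi$ is onto on $\RR_X(u)^*$'' is to upgrade the left-invariance of $\Gamma$ under $\varphi(\RR_X(w))$, which holds automatically, to invariance under all of $G$. One could alternatively translate the hypothesis, via \cref{t:minimalskew}, into minimality of $G\skewprod X$ and conclude from density of orbits in the clopen sets $\{g\}\times X$; but since the present proposition feeds into the proof of \cref{t:minimalskew}, the direct argument above is preferable.
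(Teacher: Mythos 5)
Your proof is correct, and it takes a genuinely different route from the paper's. You first reduce the $Z$-degree to group data: for the group code of the trivial subgroup, $d(u)$ equals the number $|\Pi(u)|$ of distinct values of $\varphi$ on prefixes of $u$ (a clean consequence of the description of $P$ as the words with no nonempty prefix in $\varphi^{-1}(1_G)$, which is stated in the paper's preliminaries); then, for a word $w$ of maximal degree, the squeeze $d(w)\le d(rw)\le d_X(U)$ from \eqref{eq:degree} forces $\Pi(rw)=\Gamma:=\Pi(w)$ and hence $\varphi(r)\Gamma=\Gamma$ for every $r\in\RR_X(w)$, and surjectivity of $\varphi$ on $\RR_X(w)^*$ together with $1_G\in\Gamma$ gives $\Gamma=G$. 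The paper argues instead inside the free group $F_A$: it takes the subgroup $K$ generated by $U$, the set $Q$ of prefixes of a maximal-degree word that are suffixes of words of $U$ (so $|Q|=d_X(U)$, with distinct elements of $Q$ in distinct cosets of $K$ by \cref{p:degree}), shows that $V=\{v\in F_A\mid vQ\subset QK\}$ is a subgroup containing $\RR_X(u)$ --- a step that relies on the $X$-completeness of $U$ from \cref{theoremGroupCodes} --- and finishes by computing the index of $K\cap V$ in $V$ in two ways, obtaining $|Q|=|G|$. Your version buys elementarity: no free group, no coset/index computation, no use of $X$-completeness or finiteness of $U$ (even the bound $d_X(U)\le |G|$ follows from $\Pi(u)\subseteq G$, so \cref{theoremGroupCodes} is not really needed); the trade-off is that the identity $d(u)=|\Pi(u)|$ is special to the trivial subgroup, whereas the paper's $Q$--$K$--$V$ machinery is the one that matches the general bifix-code framework of \cite{BerstelDeFelicePerrinReutenauerRindone2012,BertheDeFeliceDolceLeroyPerrinReutenauerRindone2015c} and adapts to group codes $\varphi^{-1}(H)$ for arbitrary $H$. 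Your closing remark about not invoking \cref{t:minimalskew}, to avoid circularity, is also well taken.
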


\begin{proof}
    Let $F_A$ be the free group $A$ and let $K$ be the subgroup of $F_A$ generated by~$U$. Let $u\in\cL(X)$ be of maximal $X$-degree. Let $Q$ be the set of prefixes of $u$ which are suffixes of some element of $U$ (so $|Q|$ is the $X$-degree of $U$). Observe that, for distinct elements $p$ and $q\in Q$, the cosets $pK$ and $qK$ are distinct by Proposition~\ref{p:degree}; indeed, since $\varphi(K)\leq H$, Proposition~\ref{p:degree} implies that $\varphi$ maps $pK$ and $qK$ inside disjoint right cosets of $H$ in $G$.

    Let us define
    \begin{equation*}
        V=\{v\in F_A \mid vQ\subseteq QK\}.
    \end{equation*}
    For every $v\in V$, the map $\pi(v)\colon p\mapsto q$ defined by $vp\in qK$ is a permutation. Indeed, let $vp,vp'\in qK$ for some $q\in Q$. Then
    $v^{-1}q$ is in $pK\cap p'K$, and thus $p=p'$.

    We claim that $V$ is a subgroup of $F_A$. First, let $v \in V$. Then for any $q\in Q$, since $\pi(v)$ is a permutation of $Q$, there is a $p\in Q$ such that $vp\in qK$. Then $v^{-1}q\in pK$. This shows that $v^{-1}\in V$. Next, if $v$ and $w\in V$, then $vwQ \subseteq vQK \subseteq QK$ and thus $vw\in V$. Since it is clear that $V$ contains the identity element, this proves the claim.

    Next, $V$ contains $\RR_X(u)$. Indeed, let $q \in Q$ and $y\in\RR_X(u)$. Since $q$ is a prefix of $u$, $yq$ is a prefix of $yu$, and since $yu$ is in $\cL(X)$ (by definition of $\RR_X(u)$), $yq$ is also in $\cL(X)$. Since, by Theorem~\ref{theoremGroupCodes}, $U$ is an $X$-complete bifix code, it is an $X$-complete suffix code. This implies that $yq$ is a suffix of a word in $U^*$, and thus there is a suffix $r$ of $U$ such that $yq \in rU^*$. We verify that the word $r$ is a suffix of $u$. Since $y \in \RR_X(u)$, there is a word $y'$ such that $yu = uy'$. Consequently, $r$ is a prefix of $uy'$, and in fact the word $r$ is a prefix of $u$. Indeed, one has $|r| \le |u|$, since otherwise $u$ would be in the set of internal factors of $U$, and this cannot be the case by Lemma~\ref{lemmaMaximalDegree} (recall that $u$ has maximal $Z$-degree). Thus we have $r \in Q$. Since $U^*\subseteq K$ and $r \in Q$, we have $yq \in QK$, hence $y \in V$.

    We finish the proof by showing that $|Q| = [V:K\cap V] \geq [G:H]$. The fact that $|Q| = [V:K\cap V]$ follows by noting that $\pi(v)$ is the identity on $Q$ if and only if $v$ is in $K$. For the remaining inequality, observe first that, as $K\leq\varphi^{-1}(H)$, the map $V/(K\cap V)\to G/H$, $v(K\cap V)\mapsto \varphi(v)H$ is well-defined. Moreover, the fact that $V$ contains $\RR_X(u)$ implies that $\varphi(V)H=G$, hence this map is surjective, thereby showing that $[V:K\cap V] \geq [G:H]$. Since the $X$-degree of $U$ is at most $[G:H]$ (by Theorem~\ref{theoremGroupCodes}) and equal to $|Q|$, this concludes the proof.
\end{proof}

The next example shows that there can be elements of maximal degree which fail the condition $\varphi(\RR_X(u)^*)H=G$. There are in fact even infinitely many of them.
\begin{example}\label{eg:thue-morse-1bis}
    Take $A = \{a,b\}$, $\varphi\from A\to\Z/2\Z$ defined by $\varphi(a) = 1, \varphi(b) =0$, and $X$ equal to the Thue--Morse shift, considered in Example~\ref{eg:thue-morse-1}. The bifix code $U = \varphi^{-1}(0)\cap\cL(X)$ is equal to $\{b, aa, aba, abba\}$. It has $X$-degree $2$, but $\RR_X(u)$ is contained in $\varphi^{-1}(0)$ for every $u$ sufficiently long. This last claim can be checked by finding one word $u$ with this property and then using the return preservation property~\cite{Berthe2023}. Nonetheless, the word $a$ has maximal degree and $\RR_X(a) = \{a,ab,abb\}$ satisfies $\varphi(\RR_X(a)^*) = \Z/2\Z$. 
\end{example}

Here is another example with the Thue--Morse shift where $d_X(U)<[G:H]$. Let $\varphi\colon \{a,b\}^*\to \Z/n\Z$ be a morphism with $\varphi(a)=1=-\varphi(b)$. Set $Z^*=\varphi^{-1}(0)$ and $U=Z\cap\cL(X)$. For every $n\geq 3$, we have $U=\{ab,ba,aabb,bbaa,aababb,bbabaa\}$, while $d_X(U)=3<n$ as soon as $n>3$.

\subsection{Proof of the characterization of minimality}\label{ss:proof}
We are now ready to give the proof of the main theorem of this section.
\begin{proof}[Proof of Theorem~\ref{t:minimalskew}]
    \ref{i:minimalskew-welldoc} implies \ref{i:minimalskew-minimal}. Let $Y\subseteq G\skewprod_{\varphi}  X$ be a minimal closed invariant subset. Fix $g\in G$ and $x\in X$; let us show that $(g,x)\in Y$.

    First, note that the projection $P_X(Y)=X$, since $X$ is minimal. Thus we may find $h$ such that $(h,x) \in Y$. By \ref{i:minimalskew-welldoc}, we can find a sequence of positive integers $(m_n)_{n\in\N}$ such that $\varphi(x_{[0,m_n)})=h^{-1}g$ and $x_{[0,n)} = x_{[m_n,m_n+n)}$. It follows that
	\begin{equation*}
        T_{\varphi}^{m_n}(h,x) \to (g,x),\quad\text{as } n\to\infty,
	\end{equation*}
	showing that $(g,x)\in Y$.

    \ref{i:minimalskew-minimal} implies \ref{i:minimalskew-return}. Fix a word $u\in\cL(X)$, an element $g\in G$, and consider the two clopen subsets $\{1_G\}\times[u]$ and $\{g\}\times[u]$. By minimality of $G\skewprod_{\varphi}  X$, there exists $n\in\Z$ such that:
    \begin{equation*}
        (\{1_G\}\times[u])\cap T_{\varphi}^{-n}(\{g\}\times[u])\neq\varnothing.
    \end{equation*}
    Choose a point $(1_G,x)$ in that intersection. Let $w = x_{[0,n)}$ if $n\geq 0$, and $w = x_{[n,0)}$ otherwise. Then we have $T_{\varphi}^n(1_G,x) = (\varphi(w),S^nx)\in\{g\}\times[u]$,  if $n \geq 0$, and $T_{\varphi}^n(1_G,x) = (\varphi(w)^{-1},S^nx)\in\{g\}\times[u]$, otherwise.  Thus ${w\in\RR_X(u)^*}$ and $g^{\pm1}\in\varphi(\RR_X(u)^*)$.

    \ref{i:minimalskew-return} implies \ref{i:minimalskew-welldoc}. Fix $x\in X$, $n\geq 0$ and $g\in G$. We want to prove that we can find $m\geq 0$ with $x_{[0,m)}\in\varphi^{-1}(g)$ and $x_{[0,n)} = x_{[m,m+n)}$. 

    Let $u = x_{[0,n)}$ and $Y = \DD_u(X)$ be the derivative shift of $X$ with respect to $u$ (whose definition may be found in~\cite[p.291]{DurandPerrin2021}). Thus $Y$ is a shift space on an alphabet $B = B_u$ with a bijection $\theta_u\from B\to\RR_X(u)$ such that $\theta_u(Y) = X$. Let $y\in Y$ be such that $\theta_u(y) = x$. Consider moreover the morphism $\psi = \varphi\circ\theta_u$, guaranteed to be onto by condition \ref{i:minimalskew-return}. Let $Z$ be the group code on $B$ such that $Z^* = \psi^{-1}(1_G)$, and $U = Z\cap\cL(Y)$. By Theorem~\ref{theoremGroupCodes}, the set $U$ is a $Y$-complete bifix code and thus it is, in particular, nonempty.

    Next, observe that the morphism $\psi$ also satisfies the condition of Proposition~\ref{propositionIndex}: the restriction of $\psi$ to every $\RR_{Y}(v)^*$ is onto. Indeed, for every $v\in \cL(Y)$, the morphism $\theta_v$ coding the return words to $v$ satisfies $\theta_u\circ\theta_v=\theta_w$ with $w=\phi_u(v)u$. Thus, we may apply Proposition~\ref{propositionIndex} to conclude that the $Y$-degree of $U$ is equal to~$|G|$. Since $U$ is a $Y$-complete prefix code, $y$ has arbitrary long prefixes in $U^*$. If such a prefix $v$ is long enough, it has $Z$-degree equal to the $Y$-degree of $U$, that is $|G|$.

    We claim that $v$ has a prefix $p$ such that $\psi(p)=g$. Indeed, since the $Z$-degree of $v$ is $|G|$, it has $|G|$ prefixes which belong to the set of proper suffixes of the elements of $Z$. By Lemma~\ref{lemmaMaximalDegree}, all these prefixes have distinct images by $\psi$, and thus one such prefix $p$ is mapped to $g$ by $\psi$. Letting $m = |\theta_u(p)|$, we find that $x_{[0,n)} = u = x_{[m,m+n)}$ and moreover $\varphi(x_{[0,m)}) = \psi(p) = g$.
\end{proof}

\begin{remark}
    We remark that there is also a direct proof that \ref{i:minimalskew-minimal} implies \ref{i:minimalskew-welldoc}, which sheds some light on these equivalences. Assume minimality of the skew product $G \skewprod_{\varphi}  X$ and fix $x \in X$, $n>0$. Let $w_n=x_{[0,n)}$ and $g \in G$ be given. We need to find a factor $v_n$ such that $x\in [w_nv_nw_n]_X$ and $\varphi(w_nv_n)=g$. By minimality of $G\skewprod_{\varphi}  X$, there is a sequence $(n_j)_{j\in\N}$ such that $T_{\varphi}^{n_j}(1_G,x)= (\varphi^{(n_j)}(x),S ^{n_j}x) \to (g,x)$. Thus, for large enough $j$, the sequences $S^{n_j} x$ and $x$ agree on  arbitrarily long prefixes $p_j$, and $x\in [p_j q_j p_j]_X $ for some factor $q_j$ such that  $|p_j q_j| = n_j.$ If $j$ is large enough, then the given prefix $w_n$ of $x$ is a prefix of $p_j$,  and (since $G$ is finite) $\varphi(p_jq_j)= \varphi^{(n_j)}(x) = g$.
\end{remark}

\begin{example}\label{eg:fibo-3}
    Consider once more the Fibonacci substitution $\sigma\colon a\mapsto ab,b\mapsto a$ and the Fibonacci shift $X$. Let $\varphi\colon A^*\to \Z/2\Z$ be the morphism $\varphi\colon a\mapsto 1, b\mapsto 0$. We saw in Section~\ref{ss:fibo} that the skew product $G\skewprod _{\varphi} X$ is minimal and uniquely ergodic. We provide here another argument for this which uses bifix codes. 

    First let $Z$ be the bifix code such that $Z^* = \varphi^{-1}(0)$ and $U = Z\cap\cL(X)$. We find $U = \{aa, aba, b\}$. Consider an alphabet $B = \{u,v,w\}$ and define a morphism $\phi\colon B^*\to A^*$ by 
    \begin{equation*}
        \phi\from u\mapsto aa, v\mapsto aba, w\mapsto b.
    \end{equation*}

    By construction, the image of $\phi$ has the same intersection with $\cL(X)$ as the submonoid $\varphi^{-1}(0) = Z^*$. Moreover, we have $\sigma^3\circ\phi=\phi\circ\tau$ where $\tau$ is the morphism 
    \begin{equation*}
        \tau\colon u\mapsto vvwuw, v\mapsto vvwuwuw, w\mapsto v.
    \end{equation*}

    Letting $Y$ be the shift space generated by $\tau$, the skew product $G\skewprod_{\varphi}  X$ can be identified with the tower $\widehat{Y}$ relative to the function $f(x)=|\phi(x_0)|$ (see \cite[Section~1.1.3]{DurandPerrin2021} for details). Since $\tau$ is primitive, $Y$ is minimal and uniquely ergodic by Michel's theorem, and so is $\widehat{Y}$. 
\end{example}

\subsection{Relation with average length}\label{ss:averag}

We finish the section by coming back to the notion of average length discussed in Section~\ref{sec:Bernoulli}. Let $X$ be a minimal shift space on $A$ and $\varphi\colon A^*\to G$ be a morphism onto a finite group $G$. Let $Z$ be the group code such that $\varphi^{-1}(1_G)=Z^*$, and $U = Z\cap \cL(X)$, the prefix code considered in Proposition~\ref{propositionIndex}. Recall the formula for the average length of $U$ relative to $\mu$:
\begin{equation*}
    \ell(U)=\sum_{u\in U}|u|\,\mu([u]).
\end{equation*}

\begin{example}
    The bifix code $U$ in  Example~\ref{eg:fibo-3} has average length \[\ell(U)=\mu([\varepsilon])+\mu([a])+\mu([ab])=1+\frac{1}{\lambda}+\frac{1}{\lambda^2}=2\] where $\lambda$ is the golden ratio (cf. Figure~\ref{figureFibonacci}).
\end{example} 

By \eqref{eqAverage}, one has also
\begin{equation*}
    \ell(U)=\sum_{w\in P}\mu([w]),
\end{equation*}
where $P$ is the set of proper prefixes of some element of $U$. Moreover, under the hypotheses of Theorem~\ref{t:first-main}, i.e. ergodicity of the appropriate product measure on the skew product $G\skewprod_{\varphi}  X$, where $L=Z^*=\varphi^{-1}(1_G)$, the following equalities hold, as an  extension of  Proposition~\ref{prop:bernoulli}:
\begin{equation*}
    \delta_\mu(L)= \frac{1}{|G|} = \frac{1}{\ell(U)}.
\end{equation*}

Indeed we will see that ergodicity of the skew product entails minimality (Corollary~\ref{cor:minerg}), which  implies that  the restriction of $\varphi$ on the sets $\RR_X(u)^*$ is onto by Theorem~\ref{t:minimalskew}. Then, by Proposition~\ref{propositionIndex},
 the degree  $d_X(U)$ of $U$ is $|G|$. We thus are in the scope of \cite[Corollary~4.3.8]{BerstelDeFelicePerrinReutenauerRindone2012}, which states that $d_X(U)=\ell(U)$, and so $\ell(U)=|G|$. 

\section{Minimal subsets and modular cobounding maps}
\label{s:cobounding}

We now generalize some results from the previous sections, in particular Theorem~\ref{t:first-main} which  allows a simple expression of the density under the assumption of ergodicity,  and Theorem~\ref{t:minimalskew} which characterizes minimality (referring here to the action of the  skew product $T_{\varphi}$ as defined in \eqref{eq:Tphi}, since  the action of the shift map $S$ on $X$ is already assumed to be minimal). 
We rely on the key notion of \emph{modular cobounding map} (Definition~\ref{d:cobounding}), closely related with the notion of coboundary. It allows in particular  to obtain a further  characterization  of minimality (Theorem~\ref{theo:minimal}) inspired by   Anzai's theorem on ergodicity of skew products~\cite{Anzai1951}.

We prove first  that the minimal skew products under consideration (that is, skew products $G\skewprod _{\varphi}X$  with $\varphi$ being a morphism onto a finite group $G$)  are  finite disjoint unions of their minimal closed invariant subsets, all of which have the same measure (Proposition~\ref{prop:finitecompo}). We conclude 
  that ergodicity implies minimality (Corollary~\ref{cor:minerg}). Moreover, Proposition~\ref{prop:minimal-ergodic}  together with Corollary~\ref{c:mod1}  provide sufficient conditions  for ergodicity  on every minimal closed invariant subset of $G \skewprod _{\varphi} X$. We prove our third main result, Theorem~\ref{t:third-main}, in  Section~\ref{ss:gencase} and conclude with examples in Section~\ref{ss:exa}.

\subsection{Modular coboundaries}\label{ss:cobounding}

We start by examining the general structure of skew products in terms of minimal closed invariant subsets. 

\begin{proposition}\label{prop:finitecompo}
     Let $X$ be a minimal shift space on $A$ with an invariant measure $\mu$ and $\varphi\colon A^*\to G$ be a morphism onto a finite group $G$ with uniform probability measure $\nu$. The skew product $G\skewprod _{\varphi} X$ is a disjoint union of its minimal closed invariant subsets, which are finite in number and equal in $\nu\times\mu$-measure.
\end{proposition}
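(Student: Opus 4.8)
The plan is to analyze $G \skewprod X$ using the left action of $G$ on itself, which commutes with $T_\varphi$. First I would fix a minimal closed invariant subset $Y \subseteq G \skewprod X$ (these exist by Zorn's lemma, since $G \skewprod X$ is compact). For each $g \in G$, write $gY = \{(gh, x) : (h,x) \in Y\}$; since left translation by $g$ is a homeomorphism commuting with $T_\varphi$, each $gY$ is again a minimal closed invariant subset, and any two of them are either equal or disjoint (their intersection is closed and invariant, hence empty by minimality of both). The union $\bigcup_{g \in G} gY$ is closed and invariant, and its projection to $X$ is all of $X$ by minimality of $X$; hence this union is all of $G \skewprod X$. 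This shows $G \skewprod X$ is a finite disjoint union of sets of the form $gY$, and every minimal closed invariant subset is of this form (any minimal set $Y'$ must meet some $gY$, hence equals it). In particular there are finitely many of them.

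Next I would show they all have the same $\nu \times \mu$-measure. Since left translation by $g$ preserves $\nu \times \mu$ (checked on rectangles $\{h\} \times E$, as in the proof of \cref{prop:ue}), we have $(\nu \times \mu)(gY) = (\nu \times \mu)(Y)$ for every $g$. So all the minimal closed invariant subsets $gY$ have equal measure; since they partition $G \skewprod X$ (up to the finitely many coincidences among the $gY$) and $(\nu\times\mu)(G\skewprod X) = 1$, each has measure $1/N$ where $N$ is the number of distinct sets among $\{gY : g \in G\}$.

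The only genuinely delicate point is verifying that the $gY$ genuinely \emph{partition} $G \skewprod X$ rather than merely cover it: this follows from the observation above that two minimal closed invariant sets are either equal or disjoint, applied to the pairwise comparisons $gY$ versus $g'Y$. I expect the main obstacle to be purely expository — making sure the stabilizer structure is handled cleanly, i.e. noting that the distinct sets among $\{gY\}$ are indexed by cosets of $\{g \in G : gY = Y\}$ so that $N$ divides $|G|$ — but nothing here requires a new idea beyond the commuting $G$-action and its measure-invariance.
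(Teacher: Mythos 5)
Your proposal is correct and follows essentially the same route as the paper: the left $G$-action commuting with $T_\varphi$, Zorn's lemma for existence of a minimal set $Y$, minimality of $X$ to see that the projection of $Y$ is all of $X$ so the translates $gY$ cover $G\skewprod X$, disjointness of distinct minimal sets, and measure-preservation of left translation checked on rectangles. The only point to phrase carefully is the covering step: full projection alone does not force a closed invariant set to be everything, but it does here because $\bigcup_{g\in G} gY$ is also invariant under the left $G$-action (equivalently, argue pointwise as the paper does: given $(g,x)$, pick $h$ with $(h,x)\in Y$ and note $(g,x)\in gh^{-1}Y$).
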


\begin{proof}
    Let $G$ act on the left of $G\skewprod _{\varphi} X$ by $g(h,x) = (gh,x)$. This action commutes with the transformation $T_{\varphi}$ of the skew product (defined in \eqref{eq:Tphi}) and  is continuous, so $G$ acts by automorphisms. In particular, it permutes the set of minimal closed invariant subsets.

    Let us fix a pair $(g,x)\in G\times X$ and show that it is contained in some minimal closed invariant subset of $G\skewprod_{\varphi}  X$. By Zorn's lemma, there exists at least one minimal closed invariant subset, say $Y\subseteq G\skewprod_{\varphi}  X$. Its projection on $X$ is also a minimal closed invariant subset, thus by minimality of $X$, it must be $X$ itself. It follows that $(h,x)\in Y$ for some $h\in G$, and then $gh^{-1}Y$ is a minimal closed invariant subset containing $(g,x)$. 

    This also shows that $G$ acts transitively on the set of all minimal closed invariant subsets, thus it must be finite and with cardinality dividing $|G|$. Moreover, this action by $G$ is measure-preserving, as is easily checked on rectangular sets: indeed, for every measurable sets $E\subseteq G$ and $F\subseteq X$, we have 
    \begin{equation*}
        (\nu\times\mu)(g(E\times F)) = (\nu\times\mu)(gE\times F) = (\nu\times\mu)(E\times F).
    \end{equation*}
    Thus all minimal closed invariant subsets must have the same measure.
\end{proof}

The above result has the following straightforward consequences.
\begin{corollary}\label{cor:minerg}
   Let $X$ be a minimal shift space on $A$  with an invariant measure $\mu$ and $\varphi\colon A^*\to G$ be a morphism onto a finite group $G$ with uniform probability measure $\nu$.
   \begin{enumerate}
       \item If $\nu\times\mu$ restricts to an ergodic measure on a closed invariant subset $Y$, then $Y$ must be minimal.
       \item If $\nu\times\mu$ restricts to an ergodic measure on \emph{any} minimal closed invariant subset, then it must restrict to an ergodic measure on \emph{each} of them.
       \item If $\nu\times\mu$ is ergodic, then $G\skewprod _{\varphi} X$ must be minimal.
   \end{enumerate}
\end{corollary}

We now proceed to describe the minimal closed invariant subsets of $G\skewprod_{\varphi}  X$, using the following key notion inspired by \cite{LemanczykMentezen2002}.

\begin{definition}\label{d:cobounding}
    Let $(X,S)$ be a shift space on $A$ and $\varphi\colon A^*\to G$ be a morphism onto a finite group $G$. A \emph{cobounding map} for $\varphi$ on $X$ is a continuous map $\alpha\from X\to H\backslash G$ to the set of right cosets of a subgroup $H\leq G$ such that, for all $x\in X$,  the following relates 
    the images of $\alpha $ on $x$ and on its shifted image $Sx$:
    \begin{equation*}
        \alpha(Sx) = \alpha(x)\varphi(x_0).
    \end{equation*}
\end{definition}

We also say that $\alpha$ is a cobounding map \emph{mod $H$}. Observe that if $\alpha$ is a cobounding map, then $$\alpha(S^nx) = \alpha(x)\varphi^{(n)}(x)$$ for all $n\in\Z$, with $\varphi^{(n)}$ as in~\eqref{eq:fn}. 

As the name suggests, this definition is related to  the cohomological  equations developed   in ergodic theory. These equations have a rich history, as evidenced for instance by \cite{Anzai1951,Veech1969,Veech1975,Zimmer1976,Conze1976,Schmidt1977,LemanczykMentezen2002}. In particular, we may view a cobounding map as a \enquote{certificate} that (the cocycle defined by) $\varphi$ is a \emph{coboundary} mod $H$. The next proposition clarifies the link between cobounding maps and closed invariant subsets. It is a special case of a result of Lema\'{n}czyk and Mentzen~\cite[Proposition~2.1]{LemanczykMentezen2002}.

\begin{proposition}
    \label{p:minimal-cobounding}
    Let $X$ be a minimal shift space on $A$ with an invariant measure $\mu$ and $\varphi\colon A^*\to G$ be a morphism onto a finite group  $G$ with uniform probability measure $\nu$. Let $\alpha$ be a cobounding map mod $H$ for $\varphi$, and $Y_\alpha = \{ (g,x) \mid \alpha(x) = Hg \}$.
    \begin{enumerate}
        \item\label{i:cobound-to-inv}  The set $Y_\alpha$ is a closed invariant subset of $G\skewprod_{\varphi}  X$ of $\nu\times\mu$-measure $1/[G:H]$.
        \item\label{i:inv-to-cobound} If $Y$ is a minimal closed invariant subset of $G\skewprod_{\varphi}  X$, then $Y = Y_\alpha$ for some cobounding map $\alpha$.
    \end{enumerate}
\end{proposition}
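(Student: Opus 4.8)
The plan is to establish the two parts separately, with part (i) being a direct verification and part (ii) being the substantive construction.

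For part (i), I would first check that $Y_\alpha$ is invariant under $T = T_\varphi$. Given $(g,x)\in Y_\alpha$, so $\alpha(x) = Hg$, the cobounding equation gives $\alpha(Sx) = \alpha(x)\varphi(x_0) = Hg\varphi(x_0)$, which is precisely the condition for $(g\varphi(x_0), Sx) = T(g,x)$ to lie in $Y_\alpha$; the same computation with $\varphi^{(-1)}$ handles $T^{-1}$, so $Y_\alpha$ is $T$-invariant. Closedness follows from continuity of $\alpha$ and $\varphi^{(0)}$: $Y_\alpha$ is the preimage of the diagonal-type closed set $\{(Hg,g) : g\in G\}$ under the continuous map $(g,x)\mapsto (\alpha(x), g)$ into $(H\backslash G)\times G$ (both factors discrete), hence closed. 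For the measure, note that for each fixed $g\in G$ the fiber $\{x : \alpha(x) = Hg\} = \alpha^{-1}(Hg)$, so $Y_\alpha = \bigcup_{g\in G}\{g\}\times\alpha^{-1}(Hg)$, a disjoint union, and therefore
\begin{equation*}
    (\nu\times\mu)(Y_\alpha) = \sum_{g\in G}\frac{1}{|G|}\mu(\alpha^{-1}(Hg)) = \frac{1}{|G|}\sum_{Hg'\in H\backslash G}|H|\,\mu(\alpha^{-1}(Hg')) = \frac{|H|}{|G|} = \frac{1}{[G:H]},
\end{equation*}
using that $\alpha^{-1}(Hg)$ depends only on the coset $Hg$ (so $|H|$ group elements share each fiber) and that the fibers partition $X$ up to $\mu$-null sets, in fact partition $X$ exactly.

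For part (ii), let $Y$ be a minimal closed invariant subset of $G\skewprod X$. By \cref{prop:finitecompo} (or its proof), the left $G$-action $g\cdot(h,x) = (gh,x)$ permutes the minimal closed invariant subsets transitively, and the projection of $Y$ to $X$ is all of $X$ by minimality of $X$. Let $H = \{g\in G : gY = Y\}$ be the stabilizer of $Y$ under this action; it is a subgroup of $G$. The key observation is that for each $x\in X$, the fiber $Y\cap(G\times\{x\})$ is exactly a single left coset $gH'$... — more precisely, I would show that $\{g\in G : (g,x)\in Y\}$ is a right coset $Hg_x$ for some $g_x$ depending on $x$: transitivity of the $G$-action on the (finitely many) minimal sets, together with the fact that exactly $|Y|/|X|$-many... — let me argue instead as follows. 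Since the minimal sets partition $G\times X$ (by \cref{prop:finitecompo}) and $G$ acts transitively on them with $Y$ having stabilizer $H$, there are $[G:H]$ minimal sets, and over each point $x$ the set of $g$ with $(g,x)\in Y$ has cardinality $|H|$ and is closed under right multiplication by $H$? — here care is needed about left versus right cosets. I would verify directly that $g(h,x)\in Y \iff gh\in\{k : (k,x)\in Y\}$, so the left $H$-action on $G$ (by left multiplication) preserves this fiber set; hence $\{g : (g,x)\in Y\}$ is a union of left cosets $Hg$, and counting forces it to be a single left coset — wait, I must be consistent: define $\alpha(x) = Hg$ where $(g,x)\in Y$, which is well-defined precisely if the fiber is a single right coset $Hg$. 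I would pin down the correct side by checking that if $(g,x)\in Y$ and $h\in H$ then $(hg, x)\in Y$ (since $hY=Y$), so the fiber is stable under *left* multiplication by $H$, i.e. it is a right coset $Hg$. Thus $\alpha\from X\to H\backslash G$, $\alpha(x) = Hg$ for any $g$ with $(g,x)\in Y$, is well-defined.

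It remains to check that $\alpha$ is continuous and satisfies the cobounding equation, and that $Y_\alpha = Y$. Continuity: $Y$ is clopen in $G\times X$? — $Y$ is closed, and since there are finitely many minimal sets partitioning $G\times X$, each is also open, hence clopen; therefore $\alpha$ is locally constant, hence continuous. The cobounding equation: if $(g,x)\in Y$ then $T(g,x) = (g\varphi(x_0), Sx)\in Y$, so $\alpha(Sx) = Hg\varphi(x_0) = \alpha(x)\varphi(x_0)$. Finally $Y_\alpha = \{(g,x) : \alpha(x) = Hg\} = \{(g,x) : (g,x)\in Y\} = Y$ by construction.

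**Main obstacle.** The one genuinely delicate point is getting the left/right coset bookkeeping consistent: the skew product multiplies $\varphi(x_0)$ on the \emph{right} of the $G$-coordinate, while the symmetry group acts on the \emph{left}, so one must be careful that $\alpha$ lands in \emph{right} cosets $H\backslash G$ and that "the fiber of $Y$ over $x$ is a right coset $Hg$" is the statement compatible with left-$H$-invariance of that fiber. Once the correct side is fixed, everything is a short verification; I would also note that citing Proposition 2.1 of Lemańczyk–Mentzen gives the result directly, but the self-contained argument above is short enough to include.
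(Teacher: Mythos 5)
Your proof is correct; part (i) is essentially the paper's argument (decompose $Y_\alpha$ over the cosets, check invariance from the cobounding equation, sum the measures), and in part (ii) you set things up exactly as the paper does, taking $H=\{h\in G\mid hY=Y\}$ and $\alpha(x)$ to be the fiber of $Y$ over $x$. Where you genuinely diverge is in proving that this fiber is a single right coset $Hg$ and that $\alpha$ is continuous. The paper argues fiberwise from minimality of $Y$ alone: if $g,k$ both lie in the fiber over $x$, then $(g,x)\in Y\cap gk^{-1}Y$, so $Y\cap gk^{-1}Y=Y$ by minimality, which yields $kg^{-1}\in H$; and it gets closedness of $\alpha^{-1}(Hg)$ from $\alpha^{-1}(Hg)=P_X(P_G^{-1}(g)\cap Y)$ with $P_X$ a closed map. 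You instead lean on \cref{prop:finitecompo}: the minimal sets partition $G\skewprod X$, $G$ permutes them transitively, so there are exactly $[G:H]$ of them; each projects onto $X$ (minimality of $X$), their fibers over a fixed $x$ are pairwise disjoint $G$-translates of one another, hence each fiber has exactly $|H|$ elements, and together with left $H$-stability this forces a single right coset; likewise the finite partition makes each minimal set clopen, so $\alpha$ is locally constant. This route is valid, but it is the one place your write-up needs tightening: as literally written, \enquote{stable under left multiplication by $H$, i.e.\ it is a right coset} is a non sequitur (stability only gives a union of right cosets), and the cardinality-$|H|$ claim is left with a question mark — you should spell out the counting as above. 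The trade-off: the paper's argument is self-contained at the level of the single minimal set $Y$, whereas yours is shorter once \cref{prop:finitecompo} is granted but imports its global partition/transitivity statement, hence minimality of $X$ (which is indeed the standing hypothesis behind \cref{d:cobounding}, and is also used tacitly by both proofs to ensure every fiber of $Y$ is non-empty).
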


\begin{proof}
    \ref{i:cobound-to-inv}. Note that $Y_\alpha = \bigcup_{Hg\in H\backslash G} Hg\times\alpha^{-1}(Hg)$. Since $\alpha$ is continuous, each $\alpha^{-1}(Hg)$ is closed, and so is $Y_\alpha$. Fix a pair $(g,x)\in Y_\alpha$, which means that $g\in Hg=\alpha(x)$. Hence $g\varphi^{(n)}(x)\in \alpha(x)\varphi^{(n)}(x) = \alpha(S^nx)$, and $T_{\varphi}^n(g,x) = (g\varphi^{(n)}(x), S^nx) \in Y_\alpha$. Thus $Y_\alpha$ is invariant. It has measure
    \begin{align*}
        (\nu\times\mu)(Y_\alpha) &= \sum_{Hg\in H\backslash G}(\nu\times\mu)(Hg\times\alpha^{-1}(Hg)) \\
                                 &= \frac{|H|}{|G|}\sum_{Hg\in H\backslash G}\mu(\alpha^{-1}(Hg)) = \frac{1}{[G:H]}.
    \end{align*}

    \ref{i:inv-to-cobound}. Let $Y$ be a minimal closed invariant subset of $G\skewprod_{\varphi}  X$. Consider the subgroup $H = \{ h\in G \mid hY = Y \}$, and for each $x\in X$ let 
    \begin{equation*}
        \alpha(x) = \{ h\in H \mid (h,x)\in Y\}.
    \end{equation*}
Since $X$ is minimal, the  restriction  to $Y$ of the  projection  $P_X \colon  G\times X  \to X$ is surjective, and therefore $\alpha$ is defined everywhere on $X$.
    Fix an element $g\in\alpha(x)$. We claim that $\alpha(x) = Hg$. On the one hand, it is clear that for $h\in H$, $(hg,x)\in hY = Y$, thus $hg\in\alpha(x)$. This shows that $Hg\subseteq\alpha(x)$. On the other hand, for $k\in\alpha(x)$ we have that $(g,x) = (gk^{-1}k,x)\in Y\cap gk^{-1}Y$. But note that $gk^{-1}Y$ is also a closed invariant subset of $G\skewprod_{\varphi}  X$, since $Y$ is and $T_{\varphi}$ commutes with the left action of $G$ on $G\skewprod_{\varphi}  X$. This shows that $Y\cap gk^{-1}Y$ is a non-empty closed invariant subset of $Y$, and thus $Y\cap gk^{-1}Y = Y$ by minimality of $Y$. 

    For every $(h,y)\in Y$, we conclude that $h = kg^{-1}h'$ for some $h'\in G$ such that $(h',y)\in Y$, which implies that $kg^{-1}(h,y) = (h',y)\in Y$. This means that $kg^{-1}\in H$ and thus $k = kg^{-1}g\in Hg$, as claimed. In particular, this shows that $\alpha$ is a map $X\to H\backslash G$ and that $Y = Y_\alpha$. It remains to show that $\alpha$ is a cobounding map. 

    To establish continuity, it suffices to show that $\alpha^{-1}(Hg)$ is closed for each $g\in G$. But observe that $\alpha^{-1}(Hg)$ can be written in terms of the two component projections $P_G\from G\skewprod_{\varphi}  X\to G$ and $P_X\from G\skewprod_{\varphi}  X\to X$:
    \begin{equation*}
        \alpha^{-1}(Hg) = \{ x\in X \mid g\in\alpha(x) \} = \{ x\in X \mid (g,x)\in Y \} = P_X(P_G^{-1}(g)\cap Y).
    \end{equation*}
    Since $Y$ is a closed subspace, $P_G$ is continuous, and $P_X$ is a closed map, we conclude that $\alpha^{-1}(Hg)$ is indeed closed.

    We end the proof by showing that $\alpha(x)\varphi^{(n)}(x) = \alpha(S^nx)$ for every $x\in X$ and $n\in\Z$. Take first $h\in\alpha(x)$, so $(h,x)\in Y$. Since $Y$ is invariant,
    \begin{equation*}
        (h\varphi^{(n)}(x), S^nx) = T_{\varphi}^n(h,x) \in Y,
    \end{equation*}
    and therefore $h\varphi^{(n)}(x)\in\alpha(S^nx)$. This shows that $\alpha(x)\varphi^{(n)}(x) \subseteq \alpha(S^nx)$. As this inclusion holds for all $x\in X$ and $n\in\Z$, it also holds for $x$ replaced by $S^nx$ and $n$ replaced by  $-n$, which yields:
    \begin{equation*}
        \alpha(S^nx)\varphi^{(-n)}(S^nx) \subseteq \alpha(S^{-n}S^nx) = \alpha(x).
    \end{equation*}
    Since $\varphi^{(-n)}(S^nx) = \varphi( (S^nx)_{[-n,0)} )^{-1} = (\varphi^{(n)}(x))^{-1}$ by \eqref{eq:fn}, this also proves the other inclusion. \end{proof}

\subsection{A characterization of minimality via cobounding maps}\label{ss:charac}

There is always at least one cobounding map, namely the constant map $X\to G\backslash G$, which we call the \emph{trivial cobounding map}. The corresponding closed invariant subset is then the whole skew product. It is immediately apparent that the existence of a non-trivial cobounding map thus forbids minimality of the skew product. In fact, we have the following consequence of Proposition~\ref{p:minimal-cobounding}, which is reminiscent of Anzai's theorem on ergodicity of skew products~\cite{Anzai1951} (see also \cite[Chapter 2, Theorem 4.8]{Petersen1983}). 

\begin{theorem}\label{theo:minimal}
    Let $X$ be a minimal shift space on $A$ and $\varphi\colon A^*\to G$ be a morphism onto a finite group $G$. The skew product $X\skewprod_{\varphi}  G$ is minimal if and only if there exists no non-trivial cobounding maps for $\varphi$.
\end{theorem}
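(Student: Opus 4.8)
The plan is to obtain the statement as a direct consequence of \cref{p:minimal-cobounding}: the proof will consist of assembling its two halves into an equivalence, and I would argue by contraposition in each direction.

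First I would dispatch the easier implication. Suppose there is a non-trivial cobounding map $\alpha$ for $\varphi$, say mod a subgroup $H\leq G$. Then $H$ must be a \emph{proper} subgroup of $G$: if $H=G$, then $H\backslash G$ is a one-point set and the only map $X\to H\backslash G$ is the constant one, i.e. the trivial cobounding map. By \cref{p:minimal-cobounding}\ref{i:cobound-to-inv}, the set $Y_\alpha=\{(g,x)\mid \alpha(x)=Hg\}$ is a closed invariant subset of the skew product with $(\nu\times\mu)$-measure $1/[G:H]<1$; it is non-empty, since for any $x\in X$ and any $g$ with $Hg=\alpha(x)$ we have $(g,x)\in Y_\alpha$. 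So the skew product has a proper non-empty closed invariant subset and is therefore not minimal.

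For the converse I would assume that $G\skewprod X$ is not minimal and exhibit a non-trivial cobounding map. Since $G\skewprod X$ is compact, Zorn's lemma (applied to the partially ordered set of non-empty closed invariant subsets) yields a minimal closed invariant subset $Y$; because $G\skewprod X$ is itself not minimal, $Y$ must be a proper subset. Applying \cref{p:minimal-cobounding}\ref{i:inv-to-cobound}, I would write $Y=Y_\alpha$ for some cobounding map $\alpha$ mod a subgroup $H\leq G$. Now $Y_\alpha$ is proper, so $H\neq G$: fixing $x\in X$, the pairs of $Y_\alpha$ lying over $x$ are exactly the $(g,x)$ with $Hg=\alpha(x)$, which is all of $G\times\{x\}$ when $H=G$, forcing $Y_\alpha=G\skewprod X$. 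Hence $\alpha$ is non-trivial, which is the contrapositive of the remaining implication. (Alternatively, one could invoke \cref{prop:finitecompo}: non-minimality produces at least two minimal closed invariant subsets, each of measure $<1$ by the equality of measures there, hence each proper, after which \cref{p:minimal-cobounding}\ref{i:inv-to-cobound} applies as above; one could also route this through \cref{cor:minerg}.)

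There is no genuine obstacle here; the one point to handle with care is the bookkeeping around the degenerate case $H=G$, namely checking on both sides that ``$\alpha$ non-trivial'' matches ``$Y_\alpha$ a proper subset of $G\skewprod X$''. This reduces to the two observations that $H\backslash G$ is a singleton exactly when $H=G$ and that $(\nu\times\mu)(Y_\alpha)=1/[G:H]$, both already contained in \cref{p:minimal-cobounding}. I would close with a remark that this parallels Anzai's theorem on the ergodicity of skew products, as already flagged before the statement.
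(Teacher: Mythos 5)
Your proof is correct and follows essentially the same route as the paper: both directions are proved by contraposition, using \cref{p:minimal-cobounding} (part \ref{i:cobound-to-inv} to turn a non-trivial cobounding map into a proper non-empty closed invariant subset, and Zorn's lemma plus part \ref{i:inv-to-cobound} for the converse). Your extra bookkeeping on the case $H=G$ and the measure-based check of properness are fine but not needed beyond what the paper's argument already contains.
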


\begin{proof}
    We prove the contrapositive implications. If there is a non-trivial cobounding map $\alpha\from X\to H\backslash G$, then the subset $Y_\alpha$ from Proposition~\ref{p:minimal-cobounding} is a proper, non-empty, closed invariant subset of $G\skewprod_{\varphi}  X$. Conversely if $G\skewprod_{\varphi}  X$ is not minimal then it has (by Zorn's lemma) a proper minimal closed invariant subset $Y$, which must then be of the form $Y = Y_\alpha$ for some cobounding map $\alpha\from X\to H\backslash G$, by \ref{i:inv-to-cobound} of Proposition~\ref{p:minimal-cobounding}. Since $Y_\alpha$ is proper, $\alpha$ must be non-trivial.
\end{proof}

Given two cobounding maps $\alpha$ and $\beta$, respectively mod $H$ and $K$, write $\alpha\leq\beta$ if $H\leq K$ and $\alpha^{-1}(Hg)\subseteq\beta^{-1}(Kg)$ for all $g\in G$. This gives a partial order on cobounding maps, which corresponds directly to the ordering of the corresponding closed invariant subsets under inclusion.

\begin{proposition}
    Let $X$ be a minimal shift space on $A$ and $\varphi\colon A^*\to G$ be a morphism onto a finite group $G$. For any two cobounding maps $\alpha$ and $\beta$, $Y_\alpha\subseteq Y_\beta$ if and only if $\alpha\leq\beta$. Therefore, the minimal closed invariant subsets of $G\skewprod_{\varphi}  X$ correspond to the minimal cobounding maps under $\leq$.
\end{proposition}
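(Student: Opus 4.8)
The plan is to prove the equivalence $Y_\alpha\subseteq Y_\beta\iff\alpha\leq\beta$ directly from the definitions, and then to deduce the statement about minimal closed invariant subsets by combining this with \cref{p:minimal-cobounding}. Write $\alpha\from X\to H\backslash G$ and $\beta\from X\to K\backslash G$, so that $Y_\alpha=\{(g,x)\mid \alpha(x)=Hg\}$ and similarly for $Y_\beta$. Recall that for a subgroup $H\leq G$ and $g\in G$ the coset $Hg$ is exactly the set $\{h\in G\mid hg^{-1}\in H\}$, and that $(g,x)\in Y_\alpha$ simply means $g$ is one of the elements of $G$ lying in the coset $\alpha(x)$; I will use this set-theoretic description of $Y_\alpha$ throughout, as it is the cleanest way to manipulate these objects.

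First I would prove the easy direction, $\alpha\leq\beta\implies Y_\alpha\subseteq Y_\beta$. Suppose $\alpha\leq\beta$, i.e. $H\leq K$ and $\alpha^{-1}(Hg)\subseteq\beta^{-1}(Kg)$ for all $g\in G$, and take $(g,x)\in Y_\alpha$. Then $\alpha(x)=Hg$, so $x\in\alpha^{-1}(Hg)\subseteq\beta^{-1}(Kg)$, which gives $\beta(x)=Kg$, i.e. $(g,x)\in Y_\beta$. For the converse, assume $Y_\alpha\subseteq Y_\beta$. To see $H\leq K$: pick any $h\in H$ and any $x\in X$; writing $\alpha(x)=Hg_0$ for some $g_0$, we have $hg_0\in\alpha(x)$, so $(hg_0,x)\in Y_\alpha\subseteq Y_\beta$ and also $(g_0,x)\in Y_\alpha\subseteq Y_\beta$. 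Hence $hg_0$ and $g_0$ lie in the same coset $\beta(x)$, which forces $hg_0 g_0^{-1}=h\in K$. For the inclusion of preimages: if $x\in\alpha^{-1}(Hg)$ then $g\in\alpha(x)$, so $(g,x)\in Y_\alpha\subseteq Y_\beta$, which says $g\in\beta(x)$, i.e. $\beta(x)=Kg$ and $x\in\beta^{-1}(Kg)$. This establishes the equivalence.

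For the final assertion, I would argue as follows. By \cref{prop:finitecompo} the skew product $G\skewprod X$ is the disjoint union of its minimal closed invariant subsets, and by part~\ref{i:inv-to-cobound} of \cref{p:minimal-cobounding} each such subset is of the form $Y_\alpha$ for some cobounding map $\alpha$, while by part~\ref{i:cobound-to-inv} each $Y_\alpha$ is a closed invariant subset. A closed invariant subset $Y_\alpha$ is minimal precisely when it contains no proper non-empty closed invariant subset; since any minimal closed invariant subset contained in $Y_\alpha$ is again of the form $Y_\beta$, minimality of $Y_\alpha$ is equivalent to the non-existence of a cobounding map $\beta$ with $Y_\beta\subsetneq Y_\alpha$, which by the equivalence just proved is the statement that $\alpha$ is minimal with respect to $\leq$. (One should note that $Y_\beta=Y_\alpha$ implies $\alpha=\beta$, since $\alpha(x)$ is recovered from $Y_\alpha$ as the fiber over $x$, so the order $\leq$ is genuinely antisymmetric and ``minimal cobounding map'' is unambiguous.)

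I do not anticipate a serious obstacle here; the statement is essentially a bookkeeping translation between the poset of closed invariant subsets under inclusion and the poset of cobounding maps under $\leq$, and all the real content (existence of cobounding maps describing minimal subsets, their measures) is already in \cref{p:minimal-cobounding}. The only point requiring a little care is making sure the deduction of ``$H\leq K$'' uses that $\alpha$ and $\beta$ are defined on all of $X$ and take values in coset spaces of the \emph{specified} subgroups — one must not accidentally assume $\alpha^{-1}(Hg)$ is non-empty for the particular $g$ at hand, which is why I phrased the argument by choosing an arbitrary $x$ and reading off the coset $\alpha(x)$ rather than fixing the coset first.
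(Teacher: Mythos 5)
Your proof is correct and follows essentially the same route as the paper's: both directions of the equivalence $Y_\alpha\subseteq Y_\beta\iff\alpha\leq\beta$ are obtained by unwinding the definitions (the paper works with the rectangular pieces $Hg\times\alpha^{-1}(Hg)$, you argue pointwise, which amounts to the same thing and handles the non-emptiness issue a bit more explicitly). Your deduction of the correspondence between minimal closed invariant subsets and minimal cobounding maps, via part~\ref{i:inv-to-cobound} of \cref{p:minimal-cobounding} and the existence of minimal subsets, is exactly the intended (and in the paper implicit) argument.
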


\begin{proof}
    Assume $\alpha\from X\to H\backslash G$ and $\beta\from X\to K\backslash G$. If $\alpha\leq\beta$, then $Hg\subseteq Kg$ and $\alpha^{-1}(Hg)\subseteq\beta^{-1}(Kg)$ for every $g\in G$, thus
    \begin{equation*}
        Y_\alpha = \bigcup_{Hg\in H\backslash G}Hg\times\alpha^{-1}(Hg) \subseteq \bigcup_{Kg\in K\backslash G}Kg\times\beta^{-1}(Kg) = Y_\beta.
    \end{equation*}

    Conversely, assume that $Y_\alpha\subseteq Y_\beta$ and fix $Hg\in H\backslash G$. Then $Hg\times\alpha^{-1}(Hg)\subseteq Kg'\times\beta^{-1}(Kg')$ for some $g'\in G$. In particular, $g\in Kg'$ so we may assume $g = g'$. We then deduce that $H\subseteq K$ and $\alpha^{-1}(Hg)\subseteq\beta^{-1}(Kg)$.
\end{proof}

The left action of $G$ on $G\skewprod _{\varphi} X$ corresponds to the left action on cobounding maps given by $(g\alpha)(x) = g(\alpha(x))$, where $g\alpha$ is viewed as a cobounding map mod $H^g = gHg^{-1}$. This cobounding map is such that $g\alpha(x) = H^gh \iff \alpha(x) = Hg^{-1}h$, hence $X_{g\alpha} = gX_\alpha$. This shows that the passage from minimal closed invariant subsets to minimal cobounding maps preserves the left action of $G$. In particular, $G$ acts transitively on the set of minimal cobounding maps.

\begin{corollary} \label{cor:number}
    Let $X$ be a minimal shift space on $A$ and $\varphi\colon A^*\to G$ be a morphism onto a finite group $G$. Let $H$  be a subgroup of $G$  such that   there exists  a minimal cobounding map  $\alpha\from X\to H\backslash G$.  Then, for any other minimal cobounding map $\beta\from X\to K\backslash G$, the group   $K$  is  a conjugate of $H$.
  Consequently, the number of minimal closed invariant subsets of the skew product $G\skewprod _{\varphi} X$ equals $[G:H]$.
\end{corollary}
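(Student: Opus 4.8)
The plan is to package together the bijective correspondence between minimal closed invariant subsets of $G\skewprod X$ and minimal cobounding maps with the transitive $G$-action supplied by \cref{prop:finitecompo}. First I would record that, since $\alpha$ is a minimal cobounding map, the set $Y_\alpha$ is a minimal closed invariant subset of $G\skewprod X$, and that the assignment $\beta\mapsto Y_\beta$ is injective on cobounding maps: if $Y_\alpha = Y_\beta$, then $\alpha\leq\beta$ and $\beta\leq\alpha$, which forces $\alpha=\beta$. Combined with the preceding proposition (minimal closed invariant subsets correspond to minimal cobounding maps under $\leq$), this yields a bijection between minimal cobounding maps and minimal closed invariant subsets.

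Next I would invoke \cref{prop:finitecompo}, whose proof shows that $G$ acts transitively on the finite set of minimal closed invariant subsets. Consequently every such subset has the form $gY_\alpha$ for some $g\in G$, and by \cref{r:action-cobounding} we have $gY_\alpha = Y_{g\alpha}$, where $g\alpha$ is a cobounding map modulo the conjugate subgroup $H^g = gHg^{-1}$. Transporting this through the bijection above, every minimal cobounding map equals $g\alpha$ for some $g\in G$, and is therefore a cobounding map modulo a conjugate of $H$; this is the first assertion.

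For the count, I would argue as follows. By \cref{prop:finitecompo} the minimal closed invariant subsets partition $G\times X$ and all have the same $\nu\times\mu$-measure; by part \ref{i:cobound-to-inv} of \cref{p:minimal-cobounding}, that common value equals $(\nu\times\mu)(Y_\alpha) = 1/[G:H]$. Since the total measure is $1$, there are exactly $[G:H]$ of them. (Equivalently, one can identify the stabilizer of $Y_\alpha$ under the left $G$-action with $H$, using part \ref{i:inv-to-cobound} of \cref{p:minimal-cobounding} and the injectivity above, and then apply the orbit--stabilizer theorem, noting $[G:H^g]=[G:H]$.)

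I do not anticipate a genuine obstacle: the statement is essentially a repackaging of \cref{prop:finitecompo}, \cref{p:minimal-cobounding} and \cref{r:action-cobounding}. The only points needing a little care are the injectivity of $\beta\mapsto Y_\beta$ on minimal cobounding maps and the right-coset bookkeeping when checking $Y_{g\alpha}=gY_\alpha$ and that $g\alpha$ is indeed a cobounding map modulo $gHg^{-1}$.
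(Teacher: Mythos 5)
Your proposal is correct and follows essentially the route the paper intends: the corollary is a direct repackaging of \cref{prop:finitecompo} (transitive $G$-action on the finitely many minimal closed invariant subsets), \cref{p:minimal-cobounding} (every minimal subset is some $Y_\beta$, and $(\nu\times\mu)(Y_\alpha)=1/[G:H]$), the order-correspondence proposition (giving injectivity of $\beta\mapsto Y_\beta$), and \cref{r:action-cobounding} (so $gY_\alpha=Y_{g\alpha}$ with $g\alpha$ mod $gHg^{-1}$). Both your measure-based count and the parenthetical orbit--stabilizer argument are valid, so there is nothing to add.
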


\subsection{Cobouding maps and return words}
We have established  in Theorem~\ref{t:minimalskew} a characterization of minimality in terms of return words. Without surprise, we also find links between cobounding maps and return words. This makes the relationship between \ref{i:minimalskew-minimal} and \ref{i:minimalskew-return} in Theorem~\ref{t:minimalskew} more transparent (see Remark~\ref{r:alternate} below).

\begin{proposition}\label{p:cobounding-return}
    Let $X$ be a minimal shift space on $A$ and $\varphi\colon A^*\to G$ be a morphism onto a finite group $G$. Let $H$ be a subgroup of $G$.
    \begin{enumerate}
        \item If a cobounding map $\alpha\from X\to H\backslash G$ takes constant value $Hg$ on a cylinder $[u]_X$, then $\varphi(\RR_X(u))\subseteq g^{-1}Hg$.
            \label{i:return}
        \item If a word $u\in\cL(X)$ satisfies $\varphi(\RR_X(u))\subseteq H$, then there exists a cobounding map $\alpha\from X\to H\backslash G$ which takes constant value $H$ on $[u]_X$.
            \label{i:existence}
        \item A cobounding map $\alpha\from X\to H\backslash G$ is minimal if and only if $\varphi(\RR_X(u))$ generates $g^{-1}Hg$ whenever $u$ is such that $\alpha$ takes constant value $Hg$ on $[u]_X$.
            \label{i:minimal}
    \end{enumerate}
\end{proposition}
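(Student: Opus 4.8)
The plan is to prove the three items in order, using \ref{i:return} and \ref{i:existence} as the technical core and then deducing \ref{i:minimal} from them together with the order-theoretic picture established above: namely that $Y_\beta\subseteq Y_\alpha$ iff $\beta\le\alpha$, and that the minimal closed invariant subsets of $G\skewprod X$ are exactly the sets $Y_\beta$ with $\beta$ a minimal cobounding map.

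For \ref{i:return}, given $r\in\RR_X(u)$ I would pick $x\in X$ with $x_{[0,|ru|)}=ru$; since $ru$ both begins and ends with $u$, both $x$ and $S^{|r|}x$ lie in $[u]_X$, so $\alpha(x)=\alpha(S^{|r|}x)=Hg$, and the cobounding identity $\alpha(S^{|r|}x)=\alpha(x)\varphi^{(|r|)}(x)=Hg\,\varphi(r)$ forces $\varphi(r)\in g^{-1}Hg$. For \ref{i:existence}, I would build $\alpha$ explicitly: for $x\in X$ set $k(x)=\min\{k\ge 0:S^kx\in[u]_X\}$, which is finite and uniformly bounded (by minimality and compactness of $X$), hence locally constant, and put $\alpha(x)=H\,\varphi(x_{[0,k(x))})^{-1}\in H\backslash G$. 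This is continuous and equals $H$ on $[u]_X$; the cobounding equation $\alpha(Sx)=\alpha(x)\varphi(x_0)$ is checked by cases, the crucial one being $x\in[u]_X$, where the prefix $x_{[0,k(Sx)+1)}$ of $x$ is a return word to $u$ and therefore has $\varphi$-image in $H$ by hypothesis, which is precisely what the identity demands.

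For \ref{i:minimal}, in the \enquote{if} direction: if $Y_\alpha$ is not minimal it properly contains, by Zorn's lemma, a minimal closed invariant subset, which by \ref{i:inv-to-cobound} of \cref{p:minimal-cobounding} equals $Y_\beta$ for a cobounding map $\beta$ mod some $K$, with $\beta\le\alpha$ and hence $K\subsetneq H$; choosing a cylinder $[u]_X$ on which both $\alpha$ and $\beta$ are constant, one gets $\alpha\equiv Hg$ and, because $\beta\le\alpha$, $\beta\equiv Kg$ on $[u]_X$ for a common $g$, and then \ref{i:return} applied to $\beta$ gives $\langle\varphi(\RR_X(u))\rangle\subseteq g^{-1}Kg\subsetneq g^{-1}Hg$, contradicting the hypothesis for this $u$. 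In the \enquote{only if} direction: assume $\alpha$ minimal with $\alpha\equiv Hg$ on $[u]_X$ and set $K=\langle\varphi(\RR_X(u))\rangle$, so $K\le g^{-1}Hg$ by \ref{i:return}; if $K$ were proper, \ref{i:existence} applied to the subgroup $K$ produces a cobounding map $\gamma$ mod $K$ equal to $K$ on $[u]_X$, and then $g\gamma$ (cf.\ \cref{r:action-cobounding}) is a cobounding map mod $gKg^{-1}\subsetneq H$ which satisfies $g\gamma\le\alpha$ — verified by comparing values on $[u]_X$ and propagating along orbits via the cocycle identity — so $Y_{g\gamma}\subsetneq Y_\alpha$ (properly, since the fibres of $Y_{g\gamma}$ over $X$ have cardinality $|K|<|H|$), contradicting minimality of $Y_\alpha$. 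Hence $K=g^{-1}Hg$.

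The main obstacle I anticipate is the bookkeeping in \ref{i:minimal}: keeping the conjugations by $g$ straight and translating cleanly between inclusions $\varphi(\RR_X(u))\subseteq(\text{subgroup})$ and the order $\le$ on cobounding maps. Once \ref{i:return} and \ref{i:existence} are in place, each implication is short; the only mildly delicate points are that two given cobounding maps are simultaneously constant on some cylinder and that the relation $\beta\le\alpha$ pins down a common coset representative on that cylinder.
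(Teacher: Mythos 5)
Your proposal is correct, and parts \ref{i:return} and \ref{i:existence} essentially coincide with the paper's proof: item \ref{i:return} is the same computation (the paper phrases it with $w\in\RR_X(u)^*$ and the stabilizer of $Hg$), and item \ref{i:existence} is the same construction of $\alpha$ from occurrences of $u$ (the paper uses an arbitrary occurrence $j\in C_x$ and observes the value is well defined, you use the first occurrence and check the cocycle identity at a return; these are interchangeable). For item \ref{i:minimal}, your \enquote{if} direction is the paper's argument in contrapositive form: a smaller cobounding map $\beta$ mod $K\lneq H$, constant on a common cylinder with coset contained in $Hg$, contradicts generation of $g^{-1}Hg$ via \ref{i:return}. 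Where you genuinely diverge is the \enquote{only if} direction. The paper argues dynamically inside the minimal set $Y_\alpha$: it takes the clopen sets $\{g\}\times[u]_X$ and $\{hg\}\times[u]_X$, uses topological transitivity of $Y_\alpha$ to find a transition time, and reads off $\varphi(w)=g^{-1}h^{\pm1}g$ with $w$ a concatenation of return words. You instead stay entirely within the cobounding-map formalism: if $K=\langle\varphi(\RR_X(u))\rangle$ were proper in $g^{-1}Hg$, part \ref{i:existence} yields $\gamma$ mod $K$ constant on $[u]_X$, and $g\gamma$ (via \cref{r:action-cobounding}) is a cobounding map mod $gKg^{-1}\lneq H$ with $g\gamma\leq\alpha$, contradicting minimality. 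This is valid, and it makes \ref{i:minimal} a formal consequence of \ref{i:return}, \ref{i:existence}, the order correspondence of \cref{p:minimal-cobounding}, and minimality of $X$, at the cost of two small verifications you only sketch: that $\beta\leq\alpha$ is equivalent to the pointwise containment $\beta(x)\subseteq\alpha(x)$, so the containment $g\gamma(x)\subseteq\alpha(x)$ checked on $[u]_X$ propagates to all of $X$ because every orbit meets $[u]_X$ and both maps satisfy $\alpha(S^nx)=\alpha(x)\varphi^{(n)}(x)$; and, in the \enquote{if} direction, that $\beta\leq\alpha$ with equal subgroups forces $\beta=\alpha$, so proper containment $Y_\beta\subsetneq Y_\alpha$ really does give $K\subsetneq H$. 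The paper's route, by contrast, exhibits the conjugates $g^{-1}hg$ directly as images of products of return words, mirroring the proof of \cref{t:minimalskew}; your route avoids any use of transitivity of $Y_\alpha$.
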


\begin{proof}
    \ref{i:return}. Consider $w\in\RR_X(u)^*$. Taking $x\in[wu]_X\subseteq [u]_X$, we find:
    \begin{equation*}
        \alpha(x) = \alpha(S^{|w|}x) = \alpha(x)\varphi(w) = Hg\varphi(w),
    \end{equation*}
    while the stabilizer of $Hg$ under the right action of $G$ on $H\backslash G$ is exactly $g^{-1}Hg$. 

    \ref{i:existence}. Let $u\in\cL(X)$ be such that $\varphi(\RR_X(u)) \leq H$. For $x\in X$, let 
    \begin{equation*}
        C_x = \{j\geq 0 \mid x_{[j,j+|u|)}=u\}.
    \end{equation*}
    Observe that under the assumption that $\varphi(\RR_X(u)) \subseteq H$, the value of $H\varphi(x_{[0,j)})^{-1}$ is identical for every $j\in C_x$. Hence, we may define a map $\alpha\from X\to H\backslash G$ by 
    \begin{equation*}
        \alpha(x) = H\varphi(x_{[0,j)})^{-1},\quad\text{where $j\in C_x$}.
    \end{equation*}

    Since $X$ is a minimal shift space, there exists a constant $m>0$ independent of $x$ such that $\min(C_x)<m$; hence $\alpha$ is continuous, since its value is determined by the first $m$ letters. Moreover, fixing $x\in X$ and $j\in C_x$ with $j\geq 1$, we find that
    \begin{equation*}
        \alpha(Sx) = H\varphi(x_{[1,j)})^{-1} = H\varphi(x_{[0,j)})^{-1}\varphi(x_0) = \alpha(x)\varphi(x_0).
    \end{equation*}

    \ref{i:minimal}. Assume first that $\alpha$ is minimal. Fix $h\in H$ and suppose that $\alpha$ takes constant value $Hg$ on some cylinder $[u]$. We need to show that $g^{-1}hg$ belongs to $\langle\varphi(\RR_X(u))\rangle$. Since $\{g\}\times[u]$ and $\{hg\}\times[u]$ are two non-empty clopen subsets of the minimal closed invariant subset $Y_\alpha$, we may find $k\in\Z$ with $(\{g\}\times[u])\cap T_{\varphi}^{-k}(\{hg\}\times[u])\neq\varnothing$. Choose $x$ in that intersection and let $w = x_{[0,k)}$ if $k\geq 0$ and $w = x_{[k,0)}$ otherwise. It follows that:
    \begin{equation*}
    (hg,x) = T_{\varphi}^k(g,x) = 
    \begin{cases}
        (g\varphi(w), S^kx) & \text{if } k \geq  0,\\
        (g\varphi(w)^{-1}, S^kx), & \text{otherwise}.
    \end{cases}
    \end{equation*}
    hence $\varphi(w) = g^{-1}h^{\pm 1}g$ (according to  whether $k\geq 0$ or $k< 0$). But $w$ is a concatenation of elements of $\RR_X(u)$, thus $g^{-1}hg$ belongs to the subgroup of $G$ generated by $\varphi(\RR_X(u))$.

    To prove the converse, we consider a cobounding map $\beta\from X\to K\backslash G$ such that $\beta\leq\alpha$. Let $u$ be a word such that both $\alpha$ and $\beta$ are constant on $[u]$, say with $\alpha([u]) = Hg$ and $\beta([u]) = Kh$. Note that $h\in Kh\subseteq Hg$, so we may assume that $h=g$. By part \ref{i:return} of the statement, $\varphi(\RR_X(u))$ generates a subgroup of $g^{-1}Kg$, while it also generates $g^{-1}Hg$ by our assumption. Thus $H=K$ and $\beta = \alpha$ on $[u]$. As we may partition $X$ into a union of such cylinders, we get $\alpha=\beta$, thus showing that $\alpha$ is minimal.
\end{proof}

\begin{remark}\label{r:alternate}
    From this result combined with Theorem~\ref{theo:minimal}, we deduce an alternate proof for the equivalence between \ref{i:minimalskew-minimal} and \ref{i:minimalskew-return} in Theorem~\ref{t:minimalskew}. Indeed, on the one hand, if the skew product is not minimal, then Theorem~\ref{theo:minimal} states that there exists a non-trivial cobounding map; hence it follows from \ref{i:return} of Proposition~\ref{p:cobounding-return} that for a sufficiently long word $u$, all return words in $\RR_X(u)$ are mapped inside some proper subgroup of $G$. On the other hand, if the image of some return set $\RR_X(u)$ fails to generate $G$, then by \ref{i:existence} of Proposition~\ref{p:cobounding-return}, there exists a non-trivial cobounding map, hence the skew product cannot be minimal by Theorem~\ref{theo:minimal}.
\end{remark}

Roughly speaking, the smaller the subgroup, the more restrictive the coboundary condition, hence the most stringent cobounding maps are the cobounding maps mod 1,
where the notation mod 1 refers to working modulo the trivial subgroup.
One important fact is that having such cobounding maps turns out to be sufficient for the measure $\nu\times\mu$ to be ergodic on each of the minimal closed invariant subsets of $G\skewprod _{\varphi} X$.
\begin{proposition} \label{prop:minimal-ergodic}
    Let $X$ be a minimal shift space on $A$ with an ergodic measure $\mu$ and $\varphi\colon A^*\to G$ a morphism onto a finite group $G$ with uniform probability measure $\nu$. If  there exists  a cobounding map $\alpha\from X\to G$ mod 1, then the product measure $\nu\times\mu$ is ergodic on $Y_\alpha$.
\end{proposition}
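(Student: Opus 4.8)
The plan is to exploit the fact that a cobounding map modulo the trivial subgroup exhibits $Y_\alpha$ as the \emph{graph} of $\alpha$, so that $Y_\alpha$ equipped with the normalized restriction of $\nu\times\mu$ is isomorphic -- both topologically and as a measure-preserving system -- to the base $(X,S,\mu)$. Since $H = 1$ we have $H\backslash G = G$, and
\begin{equation*}
    Y_\alpha = \{ (g,x) \mid \alpha(x) = g \} = \{ (\alpha(x),x) \mid x\in X \}.
\end{equation*}
First I would introduce the map $\iota\from X\to Y_\alpha$, $\iota(x) = (\alpha(x),x)$. It is continuous and bijective onto $Y_\alpha$, with inverse the restriction to $Y_\alpha$ of the projection $P_X\from G\skewprod X\to X$; since $X$ and $Y_\alpha$ are compact Hausdorff, $\iota$ is a homeomorphism. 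The cobounding equation yields $\iota(Sx) = (\alpha(x)\varphi(x_0),Sx) = T(\alpha(x),x) = T(\iota(x))$, so $\iota$ conjugates $(X,S)$ with the restriction of $T$ to the $T$-invariant set $Y_\alpha$ (cf.\ part~\ref{i:cobound-to-inv} of \cref{p:minimal-cobounding}).

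Next I would check that $\iota$ transports $\mu$ to the normalized restriction of $\nu\times\mu$ to $Y_\alpha$. Writing $Y_\alpha = \bigsqcup_{g\in G}\{g\}\times\alpha^{-1}(g)$, for each $g\in G$ and each Borel set $B\subseteq\alpha^{-1}(g)$ one has $\iota^{-1}(\{g\}\times B) = B$, hence
\begin{equation*}
    \iota_*\mu(\{g\}\times B) = \mu(B) = |G|\,(\nu\times\mu)(\{g\}\times B).
\end{equation*}
Sets of this form generate the Borel $\sigma$-algebra of $Y_\alpha$, so $\iota_*\mu = |G|\,(\nu\times\mu)|_{Y_\alpha}$; in particular the probability measure $|G|\,(\nu\times\mu)|_{Y_\alpha}$ on $Y_\alpha$ is $T$-invariant and its pullback along the conjugacy $\iota$ is $\mu$.

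Finally, ergodicity passes through measure-preserving conjugacies: if $U\subseteq Y_\alpha$ is Borel with $T^{-1}U = U$, then $\iota^{-1}(U)\subseteq X$ satisfies $S^{-1}\iota^{-1}(U) = \iota^{-1}(U)$, so $\mu(\iota^{-1}(U))\in\{0,1\}$ by ergodicity of $\mu$, and therefore $(\nu\times\mu)(U)\in\{0,(\nu\times\mu)(Y_\alpha)\}$. Thus $\nu\times\mu$ is ergodic on $Y_\alpha$. There is no genuine difficulty here; the only points to handle carefully are interpreting ``ergodic on $Y_\alpha$'' as ergodicity of the normalized restriction to the invariant set $Y_\alpha$, and keeping track of the normalization constant $|G| = [G:1]$, consistent with $(\nu\times\mu)(Y_\alpha) = 1/|G|$ from \cref{p:minimal-cobounding}. (Note this argument uses crucially that $H$ is trivial, so that $Y_\alpha$ is a graph; for nontrivial $H$ the fibres $\alpha^{-1}(Hg)$ carry extra $G$-directions and the statement genuinely requires the hypotheses of the later results.)
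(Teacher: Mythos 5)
Your proposal is correct and is essentially the paper's own argument: the paper likewise defines the graph map $\gamma(x)=(\alpha(x),x)$, notes it is a homeomorphism intertwining $S$ and $T$ with $\mu(E)/|G|=(\nu\times\mu)(\gamma(E))$, and concludes ergodicity of $\nu\times\mu$ on $Y_\alpha$ from ergodicity of $\mu$. Your write-up just makes the normalization and the invariance transfer more explicit.
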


\begin{proof}
    The map $\gamma\from X\to Y_\alpha$, $\gamma(x) = (\alpha(x),x)$, is a homeomorphism which intertwines $S$ and $T_{\varphi}$ and satisfies $\mu(E)/|G| = (\nu\times\mu)(\gamma(E))$ for every measurable set $E\subseteq X$. Thus $(Y_\alpha,T_{\varphi},\nu\times\mu)$ is measure-theoretically isomorphic to $(X,S,\mu)$, and since the latter is ergodic, so is the former.
\end{proof}

We moreover observe that cobounding maps mod 1 are minimal by Proposition~\ref{p:cobounding-return} \ref{i:minimal}. In this special case, Proposition~\ref{p:cobounding-return} also yields the following.

\begin{corollary}\label{c:mod1}
    Let $X$ be a minimal shift space on $A$ with and $\varphi\colon A^*\to G$ a morphism onto a finite group $G$. The following conditions are equivalent:
    \begin{enumerate}
        \item $\varphi$ has a cobounding map mod 1 on $X$. 
            \label{i:mod1-cobounding}
        \item $\varphi(\RR_X(u)) = 1$ for every long enough $u\in\cL(X)$. 
            \label{i:mod1-return-all}
        \item $\varphi(\RR_X(u)) = 1$ for some word $u\in\cL(X)$. 
            \label{i:mod1-return-some}
    \end{enumerate}
\end{corollary}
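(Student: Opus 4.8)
The plan is to read off all three implications directly from \cref{p:cobounding-return}, always specializing to the trivial subgroup $H=\{1\}$. Two of them are essentially immediate. Implication \ref{i:mod1-return-all} $\Rightarrow$ \ref{i:mod1-return-some} holds because $\cL(X)$ contains words of arbitrary length, so it contains words that are \enquote{long enough} in the sense of \ref{i:mod1-return-all}. For \ref{i:mod1-return-some} $\Rightarrow$ \ref{i:mod1-cobounding}, pick $u\in\cL(X)$ with $\varphi(\RR_X(u))=1$; then $\varphi(\RR_X(u))\subseteq\{1\}$, so \ref{i:existence} of \cref{p:cobounding-return} (applied with $H=\{1\}$) produces a cobounding map $\alpha\from X\to\{1\}\backslash G=G$, which is by definition a cobounding map mod $1$.

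The substance is in \ref{i:mod1-cobounding} $\Rightarrow$ \ref{i:mod1-return-all}. Starting from a cobounding map $\alpha$ mod $1$, I would show that $\alpha$ is constant on every sufficiently long cylinder, and then apply \ref{i:return} of \cref{p:cobounding-return} to each such cylinder $[u]_X$, where $\alpha\equiv\{g\}$, to obtain $\varphi(\RR_X(u))\subseteq g^{-1}\{1\}g=\{1\}$. To see that $\alpha$ is constant on long cylinders: $\alpha$ is continuous on the compact shift space $X$ with values in the finite discrete group $G$, hence uniformly continuous, so there is an $N$ such that $\alpha(x)$ depends only on $x_{[-N,N]}$. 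Rewriting the defining equation as $\alpha(x)=\alpha(S^Nx)\,\varphi(x_{[0,N)})^{-1}$ and noting that $\alpha(S^Nx)$ depends only on $(S^Nx)_{[-N,N]}=x_{[0,2N]}$ while $\varphi(x_{[0,N)})$ depends only on $x_{[0,N)}$, one concludes that $\alpha(x)$ actually depends only on the one-sided window $x_{[0,2N]}$; consequently $\alpha$ is constant on $[u]_X$ for every $u\in\cL(X)$ with $|u|\geq 2N+1$.

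The one genuinely nontrivial point is this last continuity argument: \ref{i:return} of \cref{p:cobounding-return} is stated for the one-sided cylinders $[u]_X$, so one must upgrade \enquote{$\alpha$ is continuous} to \enquote{$\alpha$ is determined by a one-sided window} --- which is exactly what the cocycle identity for $\alpha$ provides. Everything else is a routine specialization of \cref{p:cobounding-return}. As a byproduct, the argument also makes transparent why cobounding maps mod $1$ are minimal (by \ref{i:minimal} of \cref{p:cobounding-return}: the subgroup generated by a subset of $\{1\}$ is $\{1\}=g^{-1}\{1\}g$), which is what makes \cref{c:mod1} combine with \cref{prop:minimal-ergodic} to give the promised sufficient condition for ergodicity on every minimal closed invariant subset of $G\skewprod X$.
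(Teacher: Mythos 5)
Your proof is correct and takes essentially the same route as the paper, which establishes the corollary precisely by specializing \cref{p:cobounding-return} to $H=\{1\}$: \ref{i:return} gives \ref{i:mod1-cobounding}$\Rightarrow$\ref{i:mod1-return-all}, the middle implication is trivial, and \ref{i:existence} gives \ref{i:mod1-return-some}$\Rightarrow$\ref{i:mod1-cobounding}. Your cocycle/one-sided-window argument merely spells out the constancy of $\alpha$ on sufficiently long cylinders $[u]_X$, a point the paper leaves implicit, and it is correct.
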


\begin{proof}
    That \ref{i:mod1-cobounding} implies \ref{i:mod1-return-all} follows from Proposition~\ref{p:cobounding-return} \ref{i:return}; \ref{i:mod1-return-all} implies \ref{i:mod1-return-some} is trivial; \ref{i:mod1-return-some} implies \ref{i:mod1-cobounding} follows from Proposition~\ref{p:cobounding-return} \ref{i:existence}.
\end{proof}

\subsection{A formula for density in terms of cobounding maps} \label{ss:gencase}

The next theorem is our third main result. It gives a simple closed form for the density in terms of any minimal cobounding map under suitable assumptions. It generalizes Corollary~\ref{c:equidistribution} within the setting of minimal shift spaces.

\thirdmain*

Note that when $G\skewprod_{\varphi}  X$ is ergodic, the trivial cobounding map $\alpha\from X\to G\backslash G$ is minimal and we recover the formula from Corollary~\ref{c:equidistribution}:
\begin{equation*}
    \delta_\mu(L) = \frac{1}{|G|}\sum_{k\in K}\sum_{Gg\in G\backslash G}\mu(\alpha^{-1}(Gg))\mu(\alpha^{-1}(Ggk)) = \frac{|K|}{|G|}.
\end{equation*}

\begin{proof}
    Without loss of generality we may assume that $L = \varphi^{-1}(k)$ for some fixed $k\in G$. Fix a minimal cobounding map $\alpha\from X\to H\backslash G$ and let $\bar\mu$ be the measure on $G\skewprod_{\varphi}  X$ defined by 
    \begin{equation*}
        \bar\mu(B) = [G:H](\nu\times\mu)(B\cap Y_\alpha).
    \end{equation*}
    Note that $\bar\mu$ is ergodic by assumption. For $g\in G$ let $U_g = \{g\}\times X$. We claim that $\nu\times\mu$ projects to $\mu$. Indeed, for every measurable subset $B\subseteq X$,
    \begin{align*}
        \bar\mu(G\times B) &= \sum_{Hg\in H\backslash G}\bar\mu((Hg\times B)\cap Y_\alpha) \\
                           &= [G:H] \sum_{Hg\in H\backslash G}(\nu\times\mu)(Hg\times (B\cap\alpha^{-1}(Hg))) \\
                           &= \sum_{Hg\in H\backslash G}\mu(B\cap \alpha^{-1}(Hg))\\
                           &= \mu(B).
    \end{align*}
    Applying Theorem~\ref{t:first-main} with $\{g\}\times X = U_g$ we obtain
    \begin{align*}
        \delta_\mu(\varphi^{-1}(k)) &= \sum_{g\in G} \bar\mu(U_g)\bar\mu(U_{gk}) \\
                                    &= [G:H]^2\sum_{g\in G} (\nu\times\mu)(U_g\cap Y_\alpha)\,(\nu\times\mu)(U_{gk}\cap Y_\alpha) \\
                                    &= [G:H]^2\sum_{g\in G} \frac{1}{|G|^2}\mu(\alpha^{-1}(Hg))\mu(\alpha^{-1}(Hgk)) \\
                                    &= \frac{1}{|H|^2}\sum_{g\in G}\mu(\alpha^{-1}(Hg))\mu(\alpha^{-1}(Hgk)) \\
                                    \intertext{which after regrouping like terms} 
                                    &= \frac{1}{|H|}\sum_{Hg\in H\backslash G}\mu(\alpha^{-1}(Hg))\mu(\alpha^{-1}(Hgk)),
    \end{align*}
    concluding the proof. 
\end{proof}

Recall from Corollary~\ref{cor:minerg} that the ergodicity assumption from the above theorem is equivalent to $\nu\times\mu$ being ergodic on \emph{any} closed invariant subset (which is then necessarily minimal).

\subsection{Examples}\label{ss:exa}

We finish the section with two examples that illustrate various aspects of Theorem~\ref{t:third-main}.

\begin{example}\label{eg:thue-morse-2}
    Let $X$ be the Thue--Morse shift with its unique invariant measure $\mu$ (see Example~\ref{eg:thue-morse-1}) and let $\varphi\from A^*\to\Z/2\Z$, $\varphi(a) = 1$, $\varphi(b)=0$. 

    The morphism $\varphi$ has two cobounding maps mod 1 on $X$, hence by Corollary~\ref{c:mod1} $\Z/2\Z\skewprod_{\varphi}  X$ has two minimal closed invariant subsets on which the measure $\nu\times\mu$ is ergodic.
    The cobounding maps take constant values on the cylinders of length~7; one of them is depicted in Figure~\ref{f:coloring-thue-morse}.
    Note that the cobounding maps are not constant on cylinders of length~6: for instance, the map of Figure~\ref{f:coloring-thue-morse} satisfies $\alpha([baabbab])=1$ and $\alpha([baabbaa])=0$, but $[baabbaa]$ and $[baabbaa]$ are both contained in the cylinder $[baabba]$ of length 6.

    \begin{figure}[hbt]
        \centering
        \begin{tikzpicture}[node/.style={draw,rectangle},font=\small]
            \node[node,label=above:{\small$0$}] (abbabaa) at (0,5) {$abbabaa$};
            \node[node,label=right:{\small$0$}] (baabbaa) at (6,3.5) {$baabbaa$};
            \node[node,label=right:{\small$0$}] (aabbaab) at (6,2.5) {$aabbaab$};
            \node[node,label=below:{\small$0$}] (bbaabab) at (4,1) {$bbaabab$};
            \node[node,label=below:{\small$0$}] (baababb) at (2,1) {$baababb$};
            \node[node,label=below:{\small$0$}] (aababba) at (0,1) {$aababba$};
            \node[node,label=left: {\small$0$}] (babbaba) at (0,3.6) {$babbaba$};
            \node[node,label=left: {\small$0$}] (babbaab) at (-4,1.5) {$babbaab$};
            \node[node,label=left: {\small$0$}] (abbaabb) at (-4,2.5) {$abbaabb$};
            \node[node,label=above:{\small$1$}] (bbabaab) at (2,5) {$bbabaab$};
            \node[node,label=above:{\small$1$}] (babaabb) at (4,5) {$babaabb$};
            \node[node,label=right:{\small$1$}] (babaaba) at (2,3.6) {$babaaba$};
            \node[node,label=right:{\small$1$}] (abaabba) at (6,4.5) {$abaabba$};
            \node[node,label=right:{\small$1$}] (abbaaba) at (6,1.5) {$abbaaba$};
            \node[node,label=right:{\small$1$}] (abaabab) at (2,2.3) {$abaabab$};
            \node[node,label=left: {\small$1$}] (ababbab) at (0,2.3) {$ababbab$};
            \node[node,label=below:{\small$1$}] (ababbaa) at (-2,1) {$ababbaa$};
            \node[node,label=left: {\small$1$}] (bbaabba) at (-4,3.5) {$bbaabba$};
            \node[node,label=left: {\small$1$}] (baabbab) at (-4,4.5) {$baabbab$};
            \node[node,label=above:{\small$1$}] (aabbaba) at (-2,5) {$aabbaba$};

            \draw[->,above] (abbabaa) edge node {$a$} (bbabaab);
            \draw[->,above] (bbabaab) edge node {$b$} (babaabb);
            \draw[->,right] (bbabaab) edge node {$b$} (babaaba);
            \draw[->,above,bend left=15] (babaabb) edge node {$b$} (abaabba);
            \draw[->,right] (abaabba) edge node {$a$} (baabbaa);
            \draw[->,right] (baabbaa) edge node {$b$} (aabbaab);
            \draw[->,right] (aabbaab) edge node {$a$} (abbaaba);
            \draw[->,above,bend left=15] (abbaaba) edge node {$a$} (bbaabab);
            \draw[->,above] (bbaabab) edge node {$b$} (baababb);
            \draw[->,right] (babaaba) edge node {$b$} (abaabab);
            \draw[->,right] (abaabab) edge node {$a$} (baababb);
            \draw[->,above] (baababb) edge node {$b$} (aababba);
            \draw[->,right] (aababba) edge node {$a$} (ababbab);
            \draw[->,right] (ababbab) edge node {$a$} (babbaba);
            \draw[->,right] (babbaba) edge node {$b$} (abbabaa);
            \draw[->,above] (aababba) edge node {$a$} (ababbaa);
            \draw[->,above,bend left=15] (ababbaa) edge node {$a$} (babbaab);
            \draw[->,right] (babbaab) edge node {$b$} (abbaabb);
            \draw[->,right] (abbaabb) edge node {$a$} (bbaabba);
            \draw[->,right] (bbaabba) edge node {$b$} (baabbab);
            \draw[->,above,bend left=15] (baabbab) edge node {$b$} (aabbaba);
            \draw[->,above] (aabbaba) edge node {$a$} (abbabaa);
        \end{tikzpicture}
        \caption{One of the two cobounding maps $X\to\Z/2\Z$ on the Thue--Morse shift for the morphism $\varphi\from \{a,b\}^*\to\Z/2\Z$, $\varphi(a)=1$, $\varphi(b)=0$. The map is constant on cylinders of length 7}\label{f:coloring-thue-morse}
    \end{figure}
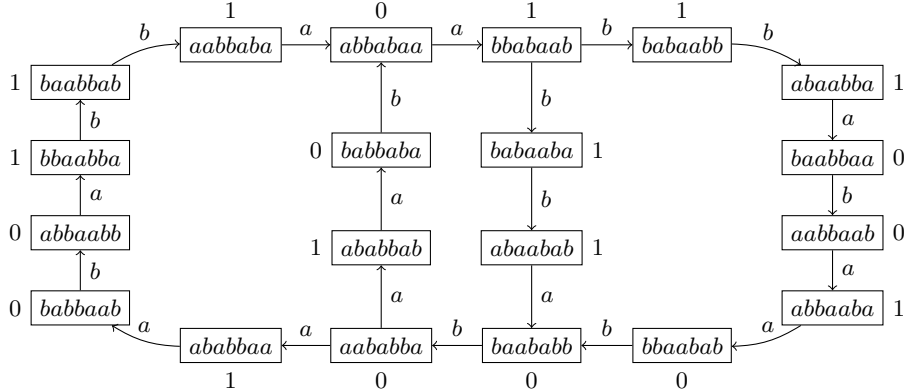

    Observe that the cobounding maps are \emph{fair}, in the sense that the preimages $\alpha^{-1}(g)$ have the same $\mu$-measure for all elements of the group. Therefore, even though the skew product $\Z/2\Z\skewprod_{\varphi}  X$ is \emph{not} ergodic, the shift space $X$ is still evenly distributed with respect to $\varphi$, in the sense that     \begin{equation*}
        \delta_\mu(\varphi^{-1}(1)) = \delta_\mu(\varphi^{-1}(0)) = 1/2. 
    \end{equation*}
\end{example}

\begin{example}\label{eg:unimodular}
 Consider the following substitution and morphism onto $S_3$ defined on the alphabet $A = \{a,b,c\}$: 
 \begin{equation*}
     \sigma\from a\mapsto aab,\  b\mapsto acb,\ c\mapsto ba, \qquad  \varphi(a)=\varphi(c) = (1\:2\:3), \ \varphi(b)=(1\:2).
 \end{equation*}
 Let $X$ be the shift space generated by the primitive substitution $\sigma$. We claim that the skew product $S_3\skewprod_{\varphi}  X$ has \emph{three} minimal closed invariant subsets.  These minimal subsets correspond to three cobounding maps
    \begin{equation*}
        \alpha_{12}\from X\to \langle (1\:2)\rangle\backslash S_3,\quad \alpha_{13}\from X\to \langle (1\:3)\rangle\backslash S_3,\quad \alpha_{23}\from X\to \langle (2\:3)\rangle\backslash S_3.
    \end{equation*}

    The cobounding map $\alpha_{12}$ is depicted in Figure~\ref{f:cobounding-unimod}, and the other two can be deduced from $\alpha_{12}$ using the natural left action of $S_3$ on cobounding maps. Note that in all cases the subgroup involved has index 3 in $S_3$, in accordance with Corollary~\ref{cor:number}. 

    \begin{figure}
        \centering
        \begin{tikzpicture}[node/.style={draw,rectangle},font=\small,xscale=2,yscale=1]

            \node[node,label=below:{\strut$H$}] (abac) at (0,0) {${abac}$};
            \node[node,label=below:{$H(1\:3)$}] (bacb) at (-1,0)   {${bacb}$};
            \node[node,label=below:{$H(2\:3)$}] (aaba) at (1,0)    {${aaba}$};
            \node[node,label=left:{\strut$H$}] (cbac) at (-1.5,1)   {${cbac}$};
            \node[node,label=right:{\strut$H$}] (abaa) at (1.5,1)    {${abaa}$};
            \node[node,label={$H(2\:3)$}] (acba) at (-1,2)   {${acba}$};
            \node[node,label={\strut$H$}] (cbaa) at (0,2) {${cbaa}$};
            \node[node,label={$H(1\:3)$}] (baab) at (1,2) {${baab}$};
            \node[node,label=left:{$H(1\:3)$}] (aacb) at (-1.5,3) {${aacb}$};
            \node[node,label=right:{\strut$H$}] (aabb) at (1.5,3) {${aabb}$};
            \node[node,label={$H(2\:3)$}] (baac) at (-1,4) {${baac}$};
            \node[node,label={$H(1\:3)$}] (bbaa) at (0,4)    {${bbaa}$};
            \node[node,label={\strut$H$}] (abba) at (1,4) {${abba}$};

            \draw[->,right] (bacb) edge node [font=\small] {${b}$} (acba) ;
            \draw[->,below] (abac) edge node [font=\small] {${a}$} (bacb) ;
            \draw[->,below] (aaba) edge node [font=\small] {${a}$} (abac) ;
            \draw[->,below right,bend right=25] (aaba) edge node [font=\small] {${a}$} (abaa) ;
            \draw[->,below left ,bend right=25] (cbac) edge node [font=\small] {${c}$} (bacb) ;
            \draw[->,above right,bend right=25] (abaa) edge node [font=\small] {${a}$} (baab) ;
            \draw[->,above] (acba) edge node [font=\small] {${a}$} (cbaa) ;
            \draw[->,above left ,bend right=25] (acba) edge node [font=\small] {${a}$} (cbac) ;
            \draw[->,above] (cbaa) edge node [font=\small] {${c}$} (baab) ;
            \draw[->,left ] (baab) edge node [font=\small] {${b}$} (aaba) ;
            \draw[->,below right,bend right=25] (baab) edge node [font=\small] {${b}$} (aabb) ;
            \draw[->,below left ,bend right=25] (aacb) edge node [font=\small] {${a}$} (acba) ;
            \draw[->,above right,bend right=25] (aabb) edge node [font=\small] {${a}$} (abba) ;
            \draw[->,above left ,bend right=25] (baac) edge node [font=\small] {${b}$} (aacb) ;
            \draw[->,above] (bbaa) edge node [font=\small] {${b}$} (baac) ;
            \draw[->,above] (abba) edge node [font=\small] {${a}$} (bbaa) ;
        \end{tikzpicture}
        \caption{Cobounding map mod $H = \langle(1\:2)\rangle$ on the shift of Example~\ref{eg:unimodular} for the morphism $\varphi\from \{a,b,c\}^*\to S_3$, $\varphi(a)=\varphi(c) = (1\:2\:3)$, $\varphi(b)=(1\:2)$. The map is constant on cylinders of length 4}
        \label{f:cobounding-unimod}
    \end{figure}
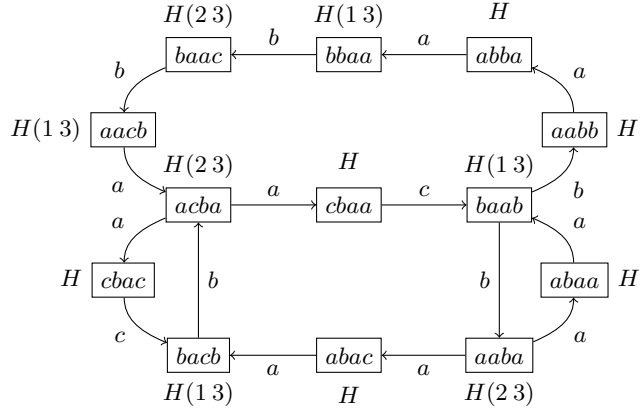

    We now briefly sketch a proof of the fact that the above cobounding maps are indeed minimal. In the present case, this is equivalent to showing that there are no cobounding maps mod 1. Recall that by Corollary~\ref{c:mod1}, there is a cobounding map mod 1 if and only if $\RR_X(u)$ has trivial image under $\varphi$ for all sufficiently long words $u$. 

    By the first main result from \cite{Berthe2023}, since $\sigma$ is a bifix encoding, there exists a constant $K>0$ such that for all $u\in \cL(\sigma)$ with $|u|\geq K$,
    \begin{equation*}
        \RR_X(\sigma(u)) = \sigma(\RR_X(u)).
    \end{equation*}

    Using the formula provided in \cite{Berthe2023} we find the upper bound $K\leq 6$, but direct computations show that we can take $K=2$; in fact, the one-letter word $u=c$ is the \emph{only} word which fails the above equality. Now take the sequence of words $u_n = \sigma^n(a)$; we claim that $(2\:3)\in\RR_X(u_n)$, for infinitely many $n$. Indeed, observe that the following equalities holds:  
    \begin{equation*}
        \varphi\circ\sigma = \varphi\circ\sigma^8,\quad \varphi\circ\sigma(a) = \varphi\circ\sigma(b) = \varphi\circ\sigma(c) = (2\:3). 
    \end{equation*}

    As $\RR_X(u_0) = \{{a, ba, bba, cba}\}$, it follows that $(2\:3) = \varphi(\sigma^{7k+1}(a))$ belongs to $\RR_X(u_{7k+1})$ for all $k\geq 0$. This shows that $\RR_X(u_n)$ have non-trivial images for infinitely many $n$, thus $\varphi$ has no cobounding map mod 1 on $X$. This confirms that the above cobounding maps are minimal.
\end{example}

\section{Ergodicity for primitive substitutions}
\label{s:morphic}

In this section we focus on the special case of shift spaces defined by primitive substitutions. Our main result is a sufficient condition for the minimal closed invariant subsets of skew products to be uniquely ergodic (Proposition~\ref{p:invertible-ergodic}). As a corollary, we deduce that substitutive dendric shifts have ergodic skew products with all finite groups (Theorem~\ref{t:dendric-ergodic}).   Note that the   family of dendric shifts, studied in \cite{BerstelDeFelicePerrinReutenauerRindone2012,BertheDeFeliceDolceLeroyPerrinReutenauerRindone2015,BertheDeFeliceDolceLeroyPerrinReutenauerRindone2015c,BDFDLPRR:15,BFFLPR:2015}, encompasses several classical families of shifts, such as Sturmian shifts,  codings of interval exchanges, and Arnoux--Rauzy shifts.

\subsection{Skew products based on primitive shifts}

Let us fix a primitive substitution $\sigma$ over a finite alphabet $A$ and let $X = X(\sigma)$ be the shift space defined by $\sigma$. The shift $X$ is a minimal shift space and we recall that it is uniquely ergodic by Michel's theorem. 

\begin{definition} 
    Let $\varphi\from A^*\to G$ be a morphism onto a finite group $G$. We say that the primitive substitution  $\sigma$ is \emph{invertible under $\varphi$} if:
    \begin{equation}\label{eq:invertibility}
        \exists  n\geq 1,\  \varphi\circ\sigma^n = \varphi. 
    \end{equation}
\end{definition}

\begin{example}
    Let $\sigma\from a\mapsto ab, b\mapsto a$ be the Fibonacci substitution and consider the morphism $\varphi\from A^*\to \Z/2\Z$, $\varphi(a)=1$, $\varphi(b)=0$. One checks that
    \begin{equation*}
        \sigma^3(a) = abaab,\ \sigma^3(b) = aba,
    \end{equation*}
    hence $\varphi\circ\sigma^3 = \varphi$, so $\sigma$ is invertible under $\varphi$.  This property   has already been used   in  the proof of  Proposition~\ref{p:fibo}.
    In fact, $\sigma$ has the much stronger property of \emph{being invertible under every homomorphism onto a finite group}, as we shall later see (Lemma~\ref{l:invertible1}).
\end{example}

\begin{proposition}\label{p:invertible-ergodic}
    Let $\sigma$ be a primitive substitution, $X$ be its shift space, and $\varphi\from X\to G$ be a morphism onto a finite group $G$. If $\sigma$ is invertible under $\varphi$, then the minimal closed invariant subsets of $G\skewprod_{\varphi}  X$ are uniquely ergodic.
\end{proposition}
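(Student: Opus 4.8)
The plan is to build an explicit primitive substitution $\bar\sigma$ on the alphabet $G\times A$ that ``lifts'' $\sigma$ to the skew product, mimicking the construction already used in the proof of \cref{p:fibo}. Concretely, fix $n\geq 1$ with $\varphi\circ\sigma^n = \varphi$, and write $Y = \Psi(G\skewprod X)\subseteq (G\times A)^\Z$ for the skew product realized as a shift space via \cref{l:isomskewprod}. For a letter $(g,a)\in G\times A$, if $\sigma^n(a) = a_0a_1\cdots a_{\ell-1}$, define
\begin{equation*}
    \bar\sigma(g,a) = (g, a_0)(g\varphi(a_0), a_1)(g\varphi(a_0a_1), a_2)\cdots(g\varphi(a_0\cdots a_{\ell-2}), a_{\ell-1}).
\end{equation*}
This is exactly the rule making the diagram $\pi\circ\bar\sigma = \sigma^n\circ\pi$ commute, where $\pi\from (G\times A)^*\to A^*$ is the projection forgetting the $G$-coordinate, and it is designed so that $\bar\sigma$ maps $\cL(Y)$ into itself. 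The invertibility hypothesis $\varphi\circ\sigma^n=\varphi$ is precisely what guarantees that the last ``new'' coordinate produced, namely $g\varphi(a_0\cdots a_{\ell-1}) = g\varphi(\sigma^n(a)) = g\varphi(a)$, is compatible with concatenation, so that $\bar\sigma$ indeed respects the ``transition'' constraint $g_{i+1} = g_i\varphi(w_i)$ defining $\cL(Y)$.

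Next I would check the key properties of $\bar\sigma$. First, $\bar\sigma(\cL(Y))\subseteq\cL(Y)$: a factor of $\bar\sigma(\omega)$ for $\omega\in\cL(Y)$ projects under $\pi$ to a factor of $\sigma^n(w)$ with $w = \pi(\omega)\in\cL(X)$, hence lies in $\cL(X)$, and by construction satisfies the transition constraint, so it lies in $\cL(Y)$ (using that membership in $\cL(Y)$ is a local condition, cf. \cref{l:markov}\ref{i:sft} with $\Psi(G\rtimes X)$ being an SFT when $X$ is — or more directly from the definition of $Y$ via $\Psi$). Second, $\bar\sigma$ need not be primitive on all of $G\times A$, but its restriction to each minimal closed invariant subset $Z\subseteq Y$ behaves well: since $G$ acts transitively on the minimal closed invariant subsets (\cref{prop:finitecompo}), and this action is by the left multiplication $g(h,x)=(gh,x)$ which conjugates $\bar\sigma$ to itself in the appropriate sense, all the minimal pieces are ``isomorphic'' and it suffices to understand one. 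On such a minimal piece, I would argue that a suitable power of $\bar\sigma$ is primitive by using primitivity of $\sigma$ together with condition \ref{i:minimalskew-welldoc} of \cref{t:minimalskew} restricted to the minimal component (equivalently, using that each minimal closed invariant subset of $G\skewprod X$, viewed through a cobounding map $\alpha\from X\to H\backslash G$ as in \cref{p:minimal-cobounding}, is itself conjugate to a skew product $H\skewprod X$ with a primitive invertible substitution). Once primitivity of a power of $\bar\sigma$ restricted to the relevant sub-alphabet is established, the minimal closed invariant subset is generated by a primitive substitution, hence uniquely ergodic by Michel's theorem~\cite{Michel1974}.

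The main obstacle I anticipate is the bookkeeping around \textbf{primitivity}: $\bar\sigma$ as defined is generally not primitive on the full alphabet $G\times A$ (its incidence structure decomposes according to the cobounding maps / minimal components), so one cannot invoke Michel's theorem directly to $Y$. The careful point is to identify the correct sub-alphabet — namely the letters $(g,a)$ that actually occur in a fixed minimal closed invariant subset $Z$ — show it is $\bar\sigma$-invariant, and prove that $\bar\sigma\vert_Z$ (or a power) is primitive on it. For this I would combine: (i) primitivity of $\sigma$, which gives, for some $k$, that $\sigma^k(a)$ contains every letter of $A$ for every $a$; (ii) the characterization of $Z$ via a minimal cobounding map $\alpha\from X\to H\backslash G$, which tells us exactly which $(g,a)$ lie in $Z$ (those with $g\in\alpha([a']_X)$ for appropriate prefixes), so that the $G$-coordinates appearing are controlled; and (iii) the fact that $\varphi$ restricted to return words generates the relevant subgroup (\cref{p:cobounding-return}\ref{i:minimal}), which ensures the $G$-coordinate ``mixes'' enough within $Z$ under iteration of $\bar\sigma$. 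Assembling these into a clean primitivity statement for $\bar\sigma\vert_Z$ is the technical heart; everything else is then routine, concluding with Michel's theorem applied componentwise and the observation that unique ergodicity of each minimal component is exactly the claim.
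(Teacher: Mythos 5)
Your setup is the same as the paper's (lift $\sigma^n$ to a substitution $\bar\sigma$ on $G\times A$ via $\Psi$, restrict to the letters occurring in a minimal component, apply Michel's theorem), but the decisive step is missing: you never actually prove that $\bar\sigma$, restricted to the sub-alphabet of a minimal closed invariant subset $Z$, is primitive (or has a primitive power). You yourself flag this as ``the technical heart'' and only list ingredients ((i) primitivity of $\sigma$, (ii) the cobounding-map description of $Z$, (iii) generation of the relevant subgroup by images of return words) without assembling them; as written, there is no argument that iterating $\bar\sigma$ on a letter of $Z$ eventually produces \emph{every} letter of $Z$, which is exactly what is needed. The parenthetical reduction you propose -- that each minimal component is ``conjugate to a skew product $H\skewprod X$'' with a primitive invertible substitution -- is also not justified: conjugating $Y_\alpha$ to $H\times X$ requires choosing coset representatives, and the resulting cocycle is only locally constant, not induced by a morphism $A^*\to H$, so \cref{t:minimalskew} and the invertibility hypothesis do not transfer verbatim. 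So the proposal is a plausible plan, not a proof.

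For comparison, the paper closes this gap with a short device that avoids all of the primitivity bookkeeping you anticipate: replace $\sigma$ by a power so that simultaneously $\varphi\circ\sigma=\varphi$ and $\sigma$ has a fixed point $y\in X$. Then for each $g\in G$ the point $z=\Psi(g,y)$ is a \emph{fixed point} of $\bar\sigma$, and it is uniformly recurrent because, by \cref{prop:finitecompo}, it lies in a minimal closed invariant subset. A growing substitution fixing a uniformly recurrent word is automatically primitive on the set $B$ of letters occurring in that word, and its shift space is the orbit closure of $z$, i.e.\ the image under $\Psi$ of the minimal component containing $(g,y)$; Michel's theorem then gives unique ergodicity of that component, and letting $g$ range over $G$ covers all components (using again \cref{prop:finitecompo}). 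If you want to salvage your route, the cleanest fix is to adopt this fixed-point argument rather than trying to prove primitivity of $\bar\sigma\vert_Z$ directly from cobounding maps and return words.
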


\begin{proof}
    Up to replacing $\sigma$ by some power, we may assume without loss of generality (since this does not change the shift space) that $\varphi\circ\sigma=\varphi$ and that $\sigma$ has a fixed point $y\in X$. 

    Let $\Psi$ be the topological conjugacy from Lemma~\ref{l:isomskewprod}. The fact that $\varphi = \varphi\circ\sigma$ entails the existence of a substitution $\bar\sigma$ on $(G\times A)^*$ such that $\Psi(g,\sigma(x)) = \bar\sigma(\Psi(g,x))$, namely, when $\sigma(a) = b_0\ldots b_{n-1}$,
    \begin{equation*}
        \bar\sigma(g,a) = (g_0,b_0)(g_1,b_1)\ldots(g_{n-1},b_{n-1}),\quad \text{where}\ g_0=g,\ g_{i+1} = g_i\varphi(b_i).
    \end{equation*}
    Observe that, for every $g\in G$, the infinite word $z = \Psi(g,y)$ is a uniformly recurrent fixed point of $\bar\sigma$, since
    \begin{equation*}
        \bar\sigma(z) = \bar\sigma(\Psi(g,y)) = \Psi(g,\sigma(y)) = \Psi(g,y) = z.
    \end{equation*}
    
    Let $B$ be the subset of letters in $G\times A$ appearing in $z$; it follows that $\bar\sigma$ restricts to a substitution over $B$. Moreover, $z$ belongs to a minimal subset of $(G\times A)^\Z$, hence it must be uniformly recurrent. Since $\bar\sigma$ is a growing substitution fixing a uniformly recurrent word, it must be primitive and have for shift space the closed orbit of $z$, which is $\Psi(Y)$. In particular, $\Psi(Y)$ is uniquely ergodic by Michel's theorem, and so is~$Y$.
\end{proof}

Observe that Example~\ref{eg:unimodular} fails both the invertibility property \eqref{eq:invertibility} \emph{and} the property of Corollary~\ref{c:mod1}. At this time we do not know whether the product measure is ergodic on the minimal closed invariant subsets of the skew product. In contrast, here is an example which satisfies both \eqref{p:invertible-ergodic} and Corollary~\ref{c:mod1}.

\begin{example}\label{ex:fourletter-0}
    Consider the following substitution and morphism onto $\Z/2\Z$ defined on the alphabet $A = \{a,b,c,d\}$:
    \begin{equation*}
        \sigma\from a\mapsto baa,\ b\mapsto adc,\ c\mapsto cdc,\ d\mapsto ad,\qquad \varphi\from a,b\mapsto 0,\ c,d\mapsto 1.
    \end{equation*}
    Then $\varphi\circ\sigma=\varphi$, hence the minimal closed invariant subsets of $\Z/2\Z\skewprod _{\varphi} X$ are uniquely ergodic. The substitution $\bar\sigma$ satisfying $\Psi(g,\sigma(x)) = \bar\sigma(\Psi(g,x))$ is defined as follows on the alphabet $\Z/2\Z\times A$, written $a_0$, $a_1$, $b_0$, $b_1$, etc.\ for convenience,
    \begin{equation*}
        \begin{array}{llll}
            a_0\mapsto b_0 a_0 a_0,& b_0\mapsto a_0 d_0 c_1,& c_0\mapsto c_0 d_1 c_0,& d_0\mapsto a_0 d_0, \\
            a_1\mapsto b_1 a_1 a_1,& b_1\mapsto a_1 d_1 c_0,& c_1\mapsto c_1 d_0 c_1,& d_1\mapsto a_1 d_1. \\
        \end{array}
    \end{equation*}
    This substitution splits in two primitive substitutions defined respectively on the alphabets $B = \{a_0, b_0, c_1, d_0\}$ and $C = \{a_1, b_1, c_0, d_1\}$. Each corresponds to one of the two minimal closed invariant subsets of the skew product $\Z/2\Z\skewprod _{\varphi} X$.
\end{example}

In what follows, we say that a substitution $\sigma\from A^*\to A^*$ is \emph{invertible} if its extension to an endomorphism on the free group $F_A$ is an automorphism. We now state two  further properties  when the substitution is  assumed to be invertible.  
\begin{lemma}\label{l:invertible1}
    Let $\sigma$ be a primitive substitution. If $\sigma$ is invertible, then it is invertible under every morphism $\varphi\from A^*\to G$ onto a finite group $G$.
\end{lemma}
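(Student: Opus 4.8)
The plan is to pass to the free group $F_A$, where invertibility makes $\sigma$ act by an automorphism, and then exploit the finiteness of $\hom(F_A,G)$. Concretely, since $\sigma$ is invertible its extension to $F_A$ is an automorphism $\hat\sigma$, and since $G$ is a group the morphism $\varphi\from A^*\to G$ extends uniquely to a group homomorphism $\hat\varphi\from F_A\to G$ with $\hat\varphi|_{A^*}=\varphi$. Because $\hat\sigma$ maps $A^*$ into $A^*$ and there restricts to $\sigma$, one has $\hat\sigma^n|_{A^*}=\sigma^n$, whence $(\hat\varphi\circ\hat\sigma^n)|_{A^*}=\varphi\circ\sigma^n$ for every $n\ge 0$. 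It therefore suffices to exhibit $n\ge 1$ with $\hat\varphi\circ\hat\sigma^n=\hat\varphi$ in $\hom(F_A,G)$.

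First I would note that $\hom(F_A,G)$ is finite: as $F_A$ is free on $A$, a homomorphism is freely and fully determined by the images of the letters, so $\hom(F_A,G)$ is in bijection with $G^A$ and has cardinality $|G|^{|A|}$. Next, precomposition by $\hat\sigma$, that is $\Phi\mapsto\Phi\circ\hat\sigma$, is a self-map of this finite set; since $\hat\sigma$ is an automorphism it is a bijection, with inverse $\Phi\mapsto\Phi\circ\hat\sigma^{-1}$. (This is exactly where invertibility — rather than mere surjectivity of $\sigma$ on $A^*$, or $\hat\sigma$ being a bare endomorphism — is needed.) Running $\hat\varphi$ through this bijection, the sequence $\hat\varphi,\hat\varphi\circ\hat\sigma,\hat\varphi\circ\hat\sigma^2,\dots$ takes only finitely many values, so $\hat\varphi\circ\hat\sigma^i=\hat\varphi\circ\hat\sigma^j$ for some $0\le i<j$; precomposing with the automorphism $\hat\sigma^{-i}$ gives $\hat\varphi=\hat\varphi\circ\hat\sigma^{\,j-i}$, and $n=j-i$ works. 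Restricting this identity to $A^*$ yields $\varphi\circ\sigma^n=\varphi$, as required.

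There is no real obstacle in this argument; the only point to get right is to phrase everything in terms of homomorphisms \emph{out of the free group}, so that precomposition by $\hat\sigma$ is literally a permutation of a finite set — working with $\sigma$ on $A^*$ directly would not produce a bijection. Note also that primitivity of $\sigma$ plays no role here.
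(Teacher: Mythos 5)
Your proof is correct and follows essentially the same route as the paper: both pass to the free group $F_A$, observe that $\hom(F_A,G)$ is finite, and use that precomposition by the automorphism extending $\sigma$ permutes this finite set, so some positive power fixes $\varphi$. The only cosmetic difference is that the paper invokes the finite order of this permutation inside the symmetric group on $\hom(F_A,G)$, while you run a pigeonhole-plus-cancellation argument on the orbit of $\hat\varphi$; these are interchangeable, and your remark that primitivity is not needed matches the paper's proof as well.
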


\begin{proof}
    Let $\aut(F_A)$ be the automorphism group of $F_A$ and $\hom(F_A, G)$ be the set of morphisms $F_A\to G$. For $\tau\in\aut(F_A)$, denote by $\tau_*$ the self-map of $\hom(F_A, G)$ defined by
    \begin{equation*}
        \tau_*(\varphi) = \varphi\circ\tau.
    \end{equation*}
    Since $\rho_*\circ\tau_* = (\tau\circ\rho)_*$, the map $\tau\mapsto\tau_*$ is a morphism from $\aut(F_A)$ to the symmetric group on $\hom(F_A,G)$. Moreover observe that $\hom(F_A,G)$ is a finite set, being in bijection with the set of maps $A\to G$. As a result, for every automorphism $\tau$ of $F_A$, $\tau_*$ is a permutation of $\hom(F_A,G)$ with finite order; in other words, there exists $n\geq 0$ such that $\tau_*^n=\mathrm{id}$, i.e. $\varphi\circ\tau^n = \varphi$ for every morphism $F_A\to G$. Applying this to the extension of $\sigma$ to an automorphism of $F_A$ yields the result.
\end{proof}

In what follows, we say that a substitution is \emph{aperiodic} if it generates a shift space that contains no finite orbit. We next establish the following lemma that will be used in the next section. Observe that the generation property expressed below implies the one  stated in Theorem~\ref{t:minimalskew}.

\begin{lemma}
    \label{l:invertible2}
    Let $\sigma$ be a primitive aperiodic substitution and $X$ the shift generated by $\sigma$. If $\RR_X(u)$ generates $F_A$ for every $u\in\cL(X)$, then $\sigma$ is invertible.
\end{lemma}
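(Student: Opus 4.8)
\textbf{Proof proposal for \cref{l:invertible2}.}

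The plan is to prove the contrapositive: assuming $\sigma$ is \emph{not} invertible, I will produce a word $u\in\cL(X)$ such that $\RR_X(u)$ does not generate $F_A$. The natural strategy is to use the hypothesis that $\sigma$ is a primitive aperiodic substitution: under these hypotheses there is a well-developed theory of return words to $\sigma$-images of words, in particular the \emph{return preservation} results from \cite{Berthe2023} (the bifix-encoding/return preservation property already invoked in \cref{eg:thue-morse-1bis} and \cref{eg:unimodular}). The key point I want to exploit is the ``derivation'' mechanism: if $y\in X$ is a fixed point of some power $\sigma^k$ and $u$ is a word occurring in $y$, then for $|u|$ large enough, $\RR_X(\sigma^k(u))$ is the image under $\sigma^k$ of $\RR_X(u)$ (after possibly passing to a suitable conjugate / adjusting by a bounded word), so that the subgroup of $F_A$ generated by $\RR_X(\sigma^k(u))$ is $\sigma^k\bigl(\langle \RR_X(u)\rangle\bigr)$ up to conjugation.

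First I would reduce to the case where $\sigma$ has a fixed point (replace $\sigma$ by a power, which changes neither $X$ nor the subgroup generated by any return set up to conjugation, and note invertibility of $\sigma$ is equivalent to invertibility of any power). Then I would argue as follows. If $\sigma$ is not invertible, its extension to an endomorphism $\hat\sigma$ of $F_A$ is not surjective, so the image $\hat\sigma(F_A)$ is a proper subgroup of $F_A$; since $F_A$ is finitely generated, by a Nielsen--Schreier argument this image is contained in (or, more usefully, its ``abelianized obstruction'' persists under iteration) — more precisely the rank of $\hat\sigma^n(F_A)$ is non-increasing, and since $\sigma$ is a non-surjective endomorphism of a free group of finite rank the determinant of its incidence matrix has $|\det|\neq 1$, so $\hat\sigma(\Z^{|A|})$ has infinite index in $\Z^{|A|}$. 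Passing to abelianizations, $\ab\circ\hat\sigma$ has image a proper finite-index-obstructed sublattice $\Lambda\subsetneq \Z^{|A|}$ with $[\Z^{|A|}:\Lambda]=|\det M_\sigma|\ge 2$ where $M_\sigma$ is the incidence matrix. Now take any word $u=u_n$ with $\RR_X(u_n)$ generating $F_A$ (which exists for every $n$ by hypothesis, as $\cL(X)$ is infinite), where $u_n$ is chosen to be a factor of the fixed point $y$ with $|u_n|$ large; by the return preservation property, $\langle\RR_X(\sigma(u_n))\rangle$ is conjugate to $\hat\sigma\bigl(\langle\RR_X(u_n)\rangle\bigr) = \hat\sigma(F_A)$, which is proper — contradiction with the hypothesis applied to the word $\sigma(u_n)\in\cL(X)$.

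The main obstacle, and the step that needs the most care, is making precise the statement ``$\langle\RR_X(\sigma(u))\rangle$ is conjugate to $\hat\sigma(\langle\RR_X(u)\rangle)$ in $F_A$''. The return-word coding morphism $\theta_u\from B_u^*\to A^*$ satisfies $\theta_{\sigma(u)w}$-type identities (as used in the proof of \cref{t:minimalskew}, where $\theta_u\circ\theta_v = \theta_w$), but one must track the bounded ``prefix'' correction that appears when passing from returns to $u$ to returns to $\sigma(u)$ — this is exactly what the return preservation theorem of \cite{Berthe2023} controls, at the cost of requiring $|u|$ to exceed an explicit threshold and of conjugating the return set by a fixed word. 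Since conjugation is an inner automorphism of $F_A$, it does not change whether the generated subgroup is all of $F_A$, so the abelianized obstruction $[\Z^{|A|}:\hat\sigma(\Z^{|A|})]\ge 2$ survives and forces $\langle\RR_X(\sigma(u))\rangle\neq F_A$. Aperiodicity is used to guarantee that $\sigma$ acts injectively (hence $\hat\sigma$ is well-defined and the incidence matrix analysis is valid) and that arbitrarily long return words exist so the preservation theorem applies.
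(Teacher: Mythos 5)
There is a genuine gap, and it sits at the heart of your argument: the claim that non-invertibility of $\hat\sigma$ forces $|\det M_\sigma|\neq 1$ is false. An endomorphism of $F_A$ given by a substitution can fail to be an automorphism while its incidence matrix is unimodular; the paper's own \cref{eg:unimodular} is of this kind (its incidence matrix has determinant $-1$). More generally, a proper subgroup of $F_A$ can surject onto the abelianization $\Z^{|A|}$ (e.g.\ $\langle b^{-1}ab,\,a^{-1}ba\rangle$ is proper in $F_2$ but maps onto $\Z^2$), so there is no ``abelianized obstruction'' that persists: from $\hat\sigma(F_A)\subsetneq F_A$ you cannot conclude that $\ab\circ\hat\sigma$ misses a sublattice, and your contradiction evaporates exactly where it was supposed to appear. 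A secondary problem is the tool you invoke to transfer return words: the return preservation theorem of \cite{Berthe2023} is stated for substitutions that are bifix encodings (this hypothesis is explicitly used when the paper applies it in \cref{eg:unimodular}), and a general primitive aperiodic substitution need not satisfy it, so the identification of $\langle\RR_X(\sigma(u))\rangle$ with a conjugate of $\hat\sigma\bigl(\langle\RR_X(u)\rangle\bigr)$ is not justified by that reference.

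The paper's proof avoids both issues by running the implication directly rather than through abelianization: after replacing $\sigma$ by a power $\sigma^k$ fixing a point $x$, Moss\'e's recognizability theorem (\cite{Mosse1992}, via the observation of Almeida and Costa \cite{Almeida2013}) gives, for $w=x_{[-n,n)}$ with $n$ large and $u=x_{[0,n)}$, the containment $u\,\RR_X(w)\,u^{-1}\subseteq\img(\sigma^k)$ inside $F_A$. Since $\RR_X(w)$ generates $F_A$ by hypothesis and conjugation is an inner automorphism, $\sigma^k$ is a surjective endomorphism of $F_A$, and the Hopfian property of finitely generated free groups (\cite{Lyndon2001}) upgrades surjectivity to invertibility. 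If you want to salvage your write-up, replace the determinant/abelianization step by this containment (your intuition about the bounded conjugating word is the right one, but the correct source is recognizability, not the bifix-encoding return preservation), and finish with the Hopfian property rather than a contradiction at the level of $\Z^{|A|}$.
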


\begin{proof}
    First, fix a point $x\in X$ which is periodic under $\sigma$, meaning $\sigma^k(x)=x$ for some $k>0$~\cite[Proposition~1.4.8]{DurandPerrin2021}. Let $w$ be a word of the form $w = x_{[-n,n)}$ and let $u = x_{[0,n)}$ be its suffix of length $n$, for some $n\geq 0$. As Almeida and Costa observed in the proof of~\cite[Proposition~5.5]{Almeida2013}, it follows from Mossé's recognizability theorem~\cite[Theorem~3.1bis]{Mosse1992} that when $n$ is large enough, $u\RR_X(w)u^{-1}\subseteq\img(\sigma^k)$. Since by assumption $\RR_X(w)$ generates $F_A$, we conclude that $\sigma^k$ is a surjective endomorphism of $F_A$. Since finitely generated free groups have the Hopfian property~\cite[Proposition~3.5]{Lyndon2001}, it follows that $\sigma^k$ is invertible, hence so is $\sigma$.
\end{proof}

For the sake of completeness, we give an example showing that the converse of the above lemma fails, i.e. invertibility does not guarantee that all $\RR_X(u)$ generate $F_A$.

\begin{example} \label{ex:fourletter}
    Let $\sigma$ be the primitive substitution  from Example~\ref{ex:fourletter-0} defined on the four-letter alphabet $A=\{a,b,c,d\}$ by:
    \begin{equation*}
        \sigma\from a\mapsto baa,\ b\mapsto adc,\ c\mapsto cdc,\ d\mapsto ad.
    \end{equation*}

    This is an invertible substitution, with $\sigma^{-1}$ given by
    \[
    a\mapsto bc^{-1}d^{-1}b,  \ b\mapsto a b^{-1}dc b^{-2} dc b^{-1},\ c\mapsto d^{-1}b,\ d\mapsto b^{-1}dcb^{-1}d.
    \]
    Nonetheless, the following is not a generating set of $F_A$:
    \begin{equation*}
        \RR_X(a) = \{a, ba, dcba, dcdca, dcdcba\}.
    \end{equation*}

    In fact, this set of return words generate the rank 3 subgroup of $F_A$ with basis $\{a,b,dc\}$.
\end{example}

\subsection{Skew products based on dendric shifts}
\label{ss:dendric}

We  turn now  to the \emph{dendric} case. First, we recall the definition. Let $X$ be a shift space. For $w\in\cL(X)$, let $\lext(w)=\{a\in A\mid aw\in\cL(X)\}$ and $\rext(w)=\{a\in A\mid wa\in\cL(X)\}$. We denote by $\ext(w)$ the graph with vertices the disjoint union of $\rext(w)$ and $\lext(w)$ and edges the pairs $(a,b)\in A\times A$ such that $awb\in\cL(X)$; it is called the \emph{extension graph} of $w$. A shift space $X$ is \emph{dendric} if for every $w\in \cL(X)$, the extension graph $\ext(w)$ is a tree. For instance, every Sturmian shift is dendric~\cite{BertheDeFeliceDolceLeroyPerrinReutenauerRindone2015}. 

An important result concerning dendric shifts is the so-called \emph{Return Theorem} by Berthé et al. which we quote next.
\begin{theorem}[{\cite[Theorem~4.5]{BertheDeFeliceDolceLeroyPerrinReutenauerRindone2015}}]
    \label{t:return}
    Let $X$ be a dendric shift on an alphabet $A$. For every $w\in\cL(X)$, the set $\RR_X(w)$ is a basis of the free group on $A$. 
\end{theorem}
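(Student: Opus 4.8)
The plan is to split the statement into two parts and then recombine them: (a) the set $\RR_X(w)$ generates the free group $F_A$, and (b) $|\RR_X(w)| = |A|$. Granting both, $\RR_X(w)$ is a generating set of $F_A$ of cardinality equal to $\operatorname{rank}(F_A) = |A|$, hence a basis, since finitely generated free groups are Hopfian (\cite{Lyndon2001}, Proposition~3.5) --- the same fact already used in the proof of \cref{l:invertible2}. The two parts correspond to the two constituents of the tree hypothesis: connectedness of the extension graphs $\ext(u)$, $u\in\cL(X)$, is what drives (a), while acyclicity is what drives the complexity estimate behind (b).

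For (b), one first records the standard computation that the tree condition makes the factor complexity $p_X$ affine. Indeed, each tree $\ext(u)$ has exactly $|\lext(u)| + |\rext(u)| - 1$ edges; summing this over all $u$ with $|u| = n$ and using $\sum_{|u|=n}|\lext(u)| = \sum_{|u|=n}|\rext(u)| = p_X(n+1)$ gives $p_X(n+2) = 2p_X(n+1) - p_X(n)$, whence $p_X(n) = (|A|-1)n + 1$ for all $n$. A Rauzy-graph analysis of order $|w|$, studying the first returns to the vertex $w$, then upgrades this to $|\RR_X(w)| = |A|$. This is the less delicate part.

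Part (a) is the main obstacle. The route I would follow is a length-reduction argument: if $w = w'a$ for a letter $a$ (and symmetrically if $w = aw'$), one writes every element of $\RR_X(w)$ as a product of elements of $\RR_X(w')$, in a way governed by the edges of $\ext(w')$, and then uses connectedness of $\ext(w')$ to verify that the generated subgroup is unchanged, i.e.\ $\langle\RR_X(w)\rangle = \langle\RR_X(w')\rangle$ inside $F_A$. Iterating down to the empty word, where $\RR_X(\varepsilon) = A$ already generates $F_A$, yields (a). An alternative, closer to the bifix-code methods of \cref{ss:codes}, is to invoke the Finite Index Basis Property of dendric shifts --- every finite $X$-complete bifix code $Z\subseteq\cL(X)$ of $X$-degree $d$ is a basis of a subgroup of index $d$ in $F_A$ --- and apply it to the $X$-complete bifix code of $X$-degree one attached to the first-return structure of $w$; since that code is, up to an inner automorphism of $F_A$ (a conjugation by $w$), the set $\RR_X(w)$, one again concludes that $\RR_X(w)$ freely generates $F_A$. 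It should be stressed that this Finite Index Basis equivalence is genuinely particular to dendric shifts: the converse direction already fails for the Thue--Morse shift, cf.\ \cref{eg:thue-morse-1bis}.

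In either approach the crux is the same: understanding precisely how the return words to $w$ change under a one-letter modification of $w$, and exploiting the tree structure to guarantee that no generator is lost. Once (a) and (b) are established, the Hopfian property finishes the argument.
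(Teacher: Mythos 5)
First, a point of reference: the paper does not prove \cref{t:return} at all --- it is quoted from \cite{BertheDeFeliceDolceLeroyPerrinReutenauerRindone2015}, so your sketch can only be measured against that original proof. Measured that way, your architecture is the standard one and essentially the one used there: connectedness of the extension graphs drives the fact that $\RR_X(w)$ generates $F_A$, acyclicity (through neutrality of the extension graphs) gives $\Card\RR_X(w)=\Card A$, and a generating set of cardinality equal to the rank of $F_A$ is a basis by the Hopfian property; your complexity computation $p_X(n)=(|A|-1)n+1$ is also correct.

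As a proof, however, the sketch leaves precisely the hard content unproved, and one ingredient is misused. For (a), the easy direction is $\RR_X(w'a)\subseteq\RR_X(w')^*$, hence $\langle\RR_X(w'a)\rangle\leq\langle\RR_X(w')\rangle$; the theorem \emph{is} the reverse inclusion, and ``use connectedness of $\ext(w')$ to verify that the subgroup is unchanged'' only restates it --- in the original this is an induction on Rauzy graphs and the groups they describe, and it also uses (uniform) recurrence, which your sketch never invokes although without it $\RR_X(w)$ need not even be finite. For (b), affine complexity alone does not yield $\Card\RR_X(w)=\Card A$: counting first-return loops at a vertex of a Rauzy graph is not an Euler-characteristic computation, and the cardinality theorem needs neutrality of each individual extension graph (weak and strong bispecial factors can cancel in the global complexity) together with uniform recurrence. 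Finally, the alternative route through the Finite Index Basis property is backwards as the literature stands --- that property for dendric shifts is itself deduced from the Return Theorem, so invoking it here begs the question --- and the code you gesture at is misidentified: the set of complete return words $\RR_X(w)w$ is bifix but in general not $X$-maximal (in the Fibonacci shift, $\{aa,aba\}$ extends to the bifix code $\{aa,aba,bab\}$ inside $\cL(X)$), and ``$X$-degree one'' cannot be the relevant invariant since the alphabet is the $X$-maximal bifix code of $X$-degree $1$. So: right skeleton, matching the cited proof, but the two pivotal steps are asserted rather than proved.
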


Therefore it follows from Theorem~\ref{t:minimalskew} that every skew product of a dendric shift and a finite group is minimal.

\begin{example}\label{exampleFibonacci2}
    The Fibonacci shift (see Section~\ref{ss:fibo})  is Sturmian  and therefore dendric. There, we have $\rext(a)=\lext(a)=\{a,b\}$ and the graph $\ext(a)$ is shown in Figure~\ref{figureFibonacci}. Moreover $\RR_X(a)=\{a,ab\}$, which is obviously a basis of the free group on $\{a,b\}$.
    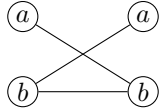
\begin{figure}[hbt]
    \centering
    \tikzstyle{loop above}=[in=50,out=130,loop]
    \tikzset{node/.style={circle, draw,minimum size=0.4cm,inner sep=0.4pt}}
	\tikzset{title/.style={minimum size=0.5cm,inner sep=0pt}}
        \begin{tikzpicture}[xscale=0.8]
          \node[node](al)at(0,0){$a$};\node[node](ar)at(2,0){$a$};
          \node[node](bl)at(0,-1){$b$};\node[node](br)at(2,-1){$b$};
          \draw(al)edge node{}(br);
          \draw(bl)edge node{}(ar);\draw(bl)edge node{}(br);
        \end{tikzpicture}
        \caption{The extension graph $\ext(a)$ in the Fibonacci shift}
        \label{figureFibonacci}
    \end{figure}
\end{example}

We also give an example of a shift space which is not dendric.
\begin{example}\label{eg:thue-morse-3}
    Let $X$ be the Thue--Morse shift from Example~\ref{eg:thue-morse-1}, generated by the two-letter substitution $\sigma\from a\mapsto ab, b\mapsto ba$. For the word $w = aba$, we find $\lext(w)=\rext(w)=\{a,b\}$ and the graph $\ext(w)$, depicted in Figure~\ref{figureMorse}, is not connected. 
    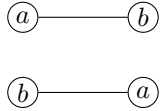
\begin{figure}[hbt]
        \centering
        \tikzset{node/.style={circle, draw,minimum size=0.4cm,inner sep=0.4pt}}
        \tikzset{title/.style={minimum size=0.5cm,inner sep=0pt}}
        \begin{tikzpicture}[xscale=0.8]
            \node[node](al) at (0,0)  {$a$};
            \node[node](ar) at (2,-1)  {$a$};
            \node[node](bl) at (0,-1) {$b$};
            \node[node](br) at (2,0) {$b$};
            \draw (al) edge node{} (br);
            \draw (bl) edge node{} (ar);
        \end{tikzpicture}
        \caption{The extension graph $\ext(w)$, $w=aba$, in the Thue--Morse shift}
        \label{figureMorse}
    \end{figure}
\end{example}

We recover as a consequence of the  following  general result the unique ergodicity of the skew product of Section~\ref{ss:fibo}.
\begin{theorem}\label{t:dendric-ergodic}
    Let $X$ be a dendric shift generated by a primitive substitution. Then the skew product $G\skewprod_{\varphi}  X$ is uniquely ergodic for every morphism $\varphi\from A^*\to G$ onto a finite group $G$.
\end{theorem}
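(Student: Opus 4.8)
The plan is to combine the Return Theorem for dendric shifts with the machinery of \cref{s:morphic}. First, by \cref{t:return}, for every $w \in \cL(X)$ the return set $\RR_X(w)$ is a basis of the free group $F_A$; in particular it generates $F_A$, so since $\varphi$ is onto we get $\varphi(\RR_X(w)^*) = G$ for every $w$, and hence by \cref{t:minimalskew} the skew product $G \skewprod X$ is already minimal. It therefore remains only to upgrade minimality to unique ergodicity.

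To do so, I would first dispose of the degenerate case $|A| = 1$, where $X$ is a single fixed point and $G \skewprod X$ is a single periodic orbit, which is trivially uniquely ergodic. When $|A| \geq 2$, a singleton cannot be a basis of $F_A$, so every $\RR_X(w)$ has at least two elements; this forces $\cL(X)$ to be infinite, and since $X$ is generated by a primitive substitution it is minimal and therefore aperiodic (a minimal infinite shift contains no finite orbit). Writing $\sigma$ for a primitive substitution generating $X$, the hypotheses of \cref{l:invertible2} are then met — $\sigma$ is primitive and aperiodic, and all return sets generate $F_A$ — so $\sigma$ is invertible. Then \cref{l:invertible1} gives that $\sigma$ is invertible under every morphism onto a finite group, in particular under $\varphi$, i.e. $\varphi \circ \sigma^n = \varphi$ for some $n \geq 1$.

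With invertibility under $\varphi$ in hand, \cref{p:invertible-ergodic} shows that every minimal closed invariant subset of $G \skewprod X$ is uniquely ergodic. Since we have already established that $G \skewprod X$ is itself minimal, it coincides with its unique minimal closed invariant subset, and is therefore uniquely ergodic, which is the claim. The only genuinely non-formal ingredient is \cref{l:invertible2}, whose proof rests on Mossé's recognizability theorem and the Hopfian property of finitely generated free groups, so I expect that step — deducing invertibility of the generating substitution from the fact that all return sets are bases — to be where the real content lies. There is no obvious shortcut around it: the direct approach of building a substitution $\bar\sigma$ on $(G \times A)^*$ lifting $\sigma$, used for the Fibonacci shift in \cref{ss:fibo}, requires precisely the relation $\varphi \circ \sigma^n = \varphi$.
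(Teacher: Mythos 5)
Your proposal is correct and follows essentially the same route as the paper: Return Theorem for minimality via \cref{t:minimalskew}, then \cref{l:invertible2} and \cref{l:invertible1} to get invertibility under $\varphi$, and \cref{p:invertible-ergodic} to conclude unique ergodicity of the (unique, by minimality) minimal closed invariant subset. Your explicit treatment of aperiodicity and of the one-letter alphabet is just a slightly more careful version of the paper's remark that dendric shifts contain no finite orbits.
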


The proof uses Lemmas~\ref{l:invertible1} and \ref{l:invertible2}. Observe that dendric shift spaces in particular fall under the scope of Lemma~\ref{l:invertible2}, thanks to the Return Theorem of Berthé et al., stated above as Theorem~\ref{t:return}. 

\begin{proof}
    Assume that $X$ is generated by the primitive morphism $\sigma$. Observe that $\sigma$ must be aperiodic, as dendric spaces cannot contain finite orbits. Moreover by the Return Theorem, $\langle\RR_X(w)\rangle=F_A$ for all $w\in\cL(X)$. Therefore we may apply Lemmas~\ref{l:invertible1} and \ref{l:invertible2} to conclude that $\sigma$ is invertible, and as a result invertible under every morphism $\varphi\from A^*\to G$ onto a finite group $G$. Applying Proposition~\ref{p:invertible-ergodic}, it follows that the skew product $G\skewprod_{\varphi}  X$ has uniquely ergodic minimal closed invariant subsets. As the skew product is also minimal by the Return Theorem (Theorem~\ref{t:return}) and Theorem~\ref{t:minimalskew}, this completes the proof.
\end{proof}

We thus obtain, as a direct application of Corollary~\ref{c:equidistribution}, the following result about the density of group languages in substitutive dendric shifts.
\begin{corollary}\label{c:dendric-ergodic}
    Let $X$ be a dendric shift generated by a primitive substitution and  let $\mu$ be its unique ergodic measure. For every morphism $\varphi\from A^*\to G$ onto a finite group $G$ and every language $L = \varphi^{-1}(K)$, $K\subseteq G$, the density $\delta_\mu(L)$ exists and is equal to $|K|/|G|$.
\end{corollary}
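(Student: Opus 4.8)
The plan is to obtain this as a formal consequence of \cref{t:deltagroup} and \cref{t:dendric-ergodic}. Since $X$ is generated by a primitive substitution, it is uniquely ergodic by Michel's theorem, so $\mu$ is indeed its unique (and in particular ergodic) invariant measure; and \cref{t:dendric-ergodic} asserts that the skew product $G\skewprod X$ is itself uniquely ergodic. So the only thing left is to identify the abstract unique invariant measure of $G\skewprod X$ with the concrete product measure $\nu\times\mu$ featuring in \cref{t:deltagroup}.

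First I would note that $\nu\times\mu$ is a $T_\varphi$-invariant Borel probability measure on $G\times X$: for each fixed value of $x_0$, the map $g\mapsto g\varphi(x_0)$ is a translation of $G$ and hence preserves the Haar measure $\nu$, while $\mu$ is $S$-invariant, so invariance of $\nu\times\mu$ under $T_\varphi(g,x)=(g\varphi(x_0),Sx)$ follows from Fubini's theorem. (This is implicit already in the statement of \cref{t:deltagroup}.) Then, since $G\skewprod X$ is uniquely ergodic by \cref{t:dendric-ergodic}, it has only one invariant probability measure, which must therefore be $\nu\times\mu$; being the unique invariant measure, $\nu\times\mu$ is ergodic. (Alternatively, one may invoke the averaging argument from the proof of \cref{prop:ue}, which shows directly that any invariant measure on $G\skewprod X$ averages under the left $G$-action to $\nu\times\mu$.)

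It remains only to apply \cref{t:deltagroup}: with $\mu$ ergodic on $X$ and $\nu\times\mu$ ergodic on $G\skewprod X$, for every $K\subseteq G$ the density $\delta_\mu(\varphi^{-1}(K))$ exists and equals $|K|/|G|$, which is exactly the claim. There is essentially no obstacle here; the corollary is a direct combination of the two cited results, the only point worth spelling out being the identification of the unique invariant measure with $\nu\times\mu$, handled as above.
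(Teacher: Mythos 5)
Your proposal is correct and follows exactly the route the paper intends: \cref{t:dendric-ergodic} gives unique ergodicity of $G\skewprod X$, the product measure $\nu\times\mu$ is invariant (and hence is the unique, therefore ergodic, invariant measure), and \cref{t:deltagroup} then yields $\delta_\mu(L)=|K|/|G|$. The paper states the corollary as a direct application of these results without spelling out the identification of the unique invariant measure with $\nu\times\mu$, which you correctly supply.
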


\section{Skew products based on Sturmian shifts} \label{s:examples}

We end with a discussion about earlier related works on skew products based on irrational rotations, mostly by Veech~\cite{Veech1969,Veech1975} and Jager and Liardet~\cite{JL:88}. Due to the nature of the examples involved, and to stay consistent with the relevant literature, it is convenient here to use alphabets consisting of natural numbers, such as $\{0,1\}$ and $\{1,2\}$.

Among the first classical examples of skew products, skew translations (i.e. skew products with base an irrational rotation on the unit circle), and their ergodic and spectral properties  have been widely investigated; see e.g. \cite{Veech1969,Veech1975,Stewart,Merrill:85,pre:Guenais2006,FerHub} and the classical references \cite{CFS:82,Petersen1983}. In particular,  they have been used to produce examples of interval exchanges that are not uniquely ergodic \cite{KN:76}. Such examples are based on skew products  of    irrational rotations associated with the group  $\Z/2\Z$, that are  minimal and not uniquely ergodic, with the skewing function being the indicator function of an interval \cite{Veech1969,Veech1975}. 

More precisely, let $\alpha$ be an irrational number in $[0,1]$. We consider the rotation  $R_{\alpha}\from x \mapsto  x + \alpha$ modulo $1$ defined on $\T=\R/\Z$. Let $I=[0, \beta)$ be a semi-open interval of $\T$.  Let ${1}_I\colon  {\mathbb T} \to \{0,1\}$ be the indicator function of $I$, i.e. $1_I(x)=1$ if and only if $x\in I$. Let $m \geq 2$ and let $G=\Z/m\Z$. Let $\varphi:\{0,1\}^*\to G$ be the   morphism defined by $0\mapsto 0$, $1 \mapsto 1$. We then consider the skew product $G\skewprod_I {\T}$ of $R_{\alpha}$ defined as 
\begin{equation*}
    (k, x) \mapsto ( k+  \varphi \circ 1_I (x),  x +\alpha)= ( k+  \varphi \circ 1_I (x), R_{\alpha} (x)).
\end{equation*}

Such skew products over rotations are closely related to symbolic skew products such as those considered in the present paper, and more precisely, to skew products over shifts obtained as  binary codings of rotations. Let $I^{\comp}$ stand for the complement of $I$ in $\T$.  We use the notation $I_0=I$, and $I_1=I^{\comp}$. Let $x$ be the infinite word in $\{0,1\}^\Z$ obtained by coding the orbit of $0$ under $R_{\alpha}$ with respect to the partition ${\mathcal I}=\{I, I^\comp\}$, i.e. for any $n \in \Z$, $x_n=1$ if and only if  $R_{\alpha}^n(0) \in I$, or in other words, $x_n = 1_I( R_{\alpha}^n (0))$ for all $n \in {\mathbb Z}$. Let $(X,S)$ be the shift space generated by $x$.  Then this shift is minimal and uniquely ergodic since $\alpha$ is irrational and $I$ is semi-open.  Indeed, we associate in a bijective way words  $w=w_1 \ldots w_n$  in $\{0,1\}^n$ 
with   sets $I_{w} := I_{w_1} \cap  R_{\alpha} ^{-1} I_{w_2} \cap  \dots   \cap R_{\alpha} ^{-n+1} I_{w_n}$  as follows: 
$w$ occurs at index $k$ in $x$ if and only if $k \alpha \in I_w$.  It is important to stress the fact that  $I$ is assumed to be semi-open. As an illustration of the relevance of  this hypothesis, 
 consider the case   $I=[0,\alpha]$,  with $\alpha <1/2$;  then, $00$   occurs   only once in the  orbit of $0$ and minimality fails. The fact that $I$ is semi-open  guarantees   that if $I_w$ is not empty, 
then 
there are \emph{infinitely many} $k$ such that  $k \alpha \in I_w$, 
by  density of the sequence  $(k \alpha)_k$, hence the uniform recurrence of   $x$, and  thus the minimality of $X$.   When the length of $I$ equals $\alpha$ or $1-\alpha$, such binary codings of rotations are Sturmian. 

Not all intervals $I$ lead to ergodic skew products. In fact, by \cite{Veech1969},   if   $\alpha$ is  not badly approximable, then  there exists an interval $I$ such that the skew product $G\skewprod_I\T$ of $R_{\alpha}$ is  not   ergodic, a  result  which  inspired   the  elegant   characterization of ergodicity  from  \cite{pre:Guenais2006} stated in terms of Ostrowski's numeration. Moreover, as proved in  \cite{Veech1975},   skewing a badly approximable rotation  over a finite number of intervals  with rational endpoints still provides a uniquely ergodic skew product. However,  \cite{Chaika:14} provides  a  $\Z/2\Z$ skew product of a  badly approximable rotation that  is   minimal and not uniquely ergodic, by skewing   over  the indicator function of a union of  two intervals. 

In most examples considered in \cite{Veech1969,Veech1975}, intervals have lengths that do not belong to $\Z \alpha+ \Z$. We consider here the  complementary case of Sturmian shifts. We can then apply Theorem \ref{t:dendric-ergodic} and 
Corollary~\ref{c:dendric-ergodic} when they are furthermore assumed to be generated by a substitution, 
such as exemplified below. Note also that substitutive Sturmian shifts have been characterized in \cite{Yasutomi,Crisp1993} (in particular, the parameter $\alpha$ of the underlying rotation is quadratic, and thus  badly approximable).

\begin{example}\label{ex:Fibocounting}
    Let $X$ be  the Fibonacci  shift on  the alphabet $\{0,1\}$.  We   consider the  skew product  $\Z/m\Z\skewprod_{\varphi}  X$, where $\varphi$ is the morphism $\{0,1\}^*\to\Z/m\Z$ given by $0\mapsto 0$, $1 \mapsto 1$.  In particular, one has $\varphi^{(n)}(x)= |x_0 \ldots x_{n-1}|_1$ modulo $m$ for $n \geq 0$, as explained in Example \ref{ex:counting0}. By Theorem \ref{t:dendric-ergodic}, $\Z/m\Z\skewprod _{\varphi} X$ is uniquely ergodic, which yields equidistribution results on the congruence of the number of visits of $R_{\alpha}$ to the interval $[0 ,\alpha)$, where $\alpha=\frac{\sqrt 5 -1}{2}$. In other words, for every $x \in X$, $r\in\Z/m\Z$ and $a\in \{0,1\}$, one has:
    \begin{equation*}
        \frac{1}{N} \card \{ n: 0 \leq n \leq N-1,\ |x_0 \ldots x_{n-1}|_a  \equiv r  \bmod m\} \to \frac{1}{m},
    \end{equation*}
    or in other words,
    \begin{equation*}
        \frac{1}{N}   \card \{ n: 0 \leq n \leq N-1,\   \card\{i:  0 \leq i  <n,\  i \alpha \in [0,\alpha)\} \equiv r \bmod m \} \to \frac{1}{m}.
    \end{equation*}
\end{example}

Finally, we consider in the   next example,  inspired by  the work of Jager and Liardet~\cite{JL:88},   equidistribution properties  for convergents  in continued fraction expansions.
\begin{example}\label{ex:Fibocontfrac}

    Let $X$ be the Fibonacci shift on the alphabet $ \{1,2\}$.
     We continue Example \ref{ex:convergents}  with the skew product  $G(2) \skewprod_{\varphi}  X$ with the non-Abelian skewing group $G(2)= \GL(2, {\mathbb Z}/2{\mathbb Z})$
 in relation to distribution properties modulo $2$  for convergents of continued fraction expansions, inspired by the work~\cite{JL:88}, which handles the  case of a random real  number.

Let $x=(x_n)_{n\in\Z} \in X$. Consider the  real number in the unit interval $[0,1]$ that admits $(x_n)_{n\geq 1}$  as its sequence of partial quotients and let $(p_n(x)/q_n(x))_{n\in\N}$ stand for the  associated sequence of  rational approximations.     By Theorem~\ref{t:dendric-ergodic}, the skew product $G(2) \skewprod_{\varphi} X$ is uniquely ergodic. We can thus deduce, as detailed in Example~\ref{ex:convergents}, the following distribution results in the group $G(2)$, as a  counterpart of \cite[Theorem~3.11]{JL:88} which holds for a.e.\ real number in $[0,1]$  (see also \cite{Szusz,Moeckel,Borda2025} for related works).  For every $k=1,2$ and for every $x \in X$
    \begin{gather*}
        \lim_{N\to\infty}\frac{1}{N}  \card\{ n: 1\leq  n  \leq N,\ q_n(x) \equiv 0 \bmod 2\}= \frac{1}{3},\\
        \lim_{N\to\infty}\frac{1}{N}  \card\{ n: 1\leq  n  \leq N,\ q_n(x) \equiv 1 \bmod 2\}= \frac{2}{3}.
    \end{gather*}
  
 In fact, the distribution in the group $G(2)$ of the sequence of continued fraction convergents $(p_n/q_n)_{n\in\N}$ whose sequence of partial quotients  is given by elements of the  Fibonacci shift $X$ behaves like that of a random irrational number. Note that we recover the well-known fact that certain residue classes are  attained more frequently than others
 (as already noted at the end of Example~\ref{ex:convergents}, the entries of    equidistributed  matrices in $G(2)$ are not equidistributed modulo $2$).  
 
  This statement can be considered as a  modulo $m$ counterpart of    L\'evy's theorem  stating   that  $\lim_{n\to\infty} \frac{\log q_n}{n}= \frac{\pi^2}{ 12 \log 2}$ a.e. 
    It is therefore  natural to ask    how the convergents   behave without taking them modulo $m$, in terms of convergence of  $ \frac{\log q_n}{n}$.
    Consider   the   cocyle map $\psi\colon\{0,1\}^* \rightarrow  \GL(2, {\mathbb R})$  defined similarly  as $\varphi$,  but now with the matrix $\psi(k)$ being     considered  in  $ \GL(2, 
    {\mathbb R})$ (without reduction modulo $2$). By \cite[Theorem 3]{Furman},  there exists  some constant $\Lambda_X>0$ such that $\lim_{n\to\infty} \frac1n \log \Vert \psi^{(n)} (x)\Vert=\Lambda_X$  for all $x \in X$, which yields  the  existence of  $\lim_{n\to\infty} \frac1n \log q_n (x)=\Lambda_X$  for all $x \in X$. In other words, the cocycle $\psi$ is uniform, with the terminology of \cite{Furman}. We use here the fact that  $X$ is minimal, uniquely ergodic and the cocycle map $\psi$ is such that  the entries of $\psi^{(2)}$ are positive. 
  
   Lastly, observe that similar results can be obtained for   higher-dimensional continued fractions   via skew products   defined   with  primitive dendric  shifts on  larger alphabets, such as codings of  interval exchanges; consider for instance  the Jacobi--Perron algorithm whose equidistribution properties modulo $m$ are studied in  \cite{BNN:2006} for random numbers.
\end{example}

\section*{Acknowledgements}

    We would first like to thank the anonymous referees for carefully reading through our paper and for making very helpful comments and suggestions.
    We warmly thank Samuel Petite for leading us to a mistake in an early version of this paper and France Gheeraert for suggesting a simplification in the proof of Theorem~\ref{t:minimalskew}. 
    We also thank Olivier Carton and Vincent Delecroix for insightful conversations about the notion of pointwise densities.

    The first author is supported by Agence Nationale de la Recherche through the project SymDynAr (ANR-23-CE40-0024) and 2024 ERC Synergy Project DynAMiCs (101167561). The second author was partially supported by the CTU Global Postdoc Fellowship program. The third author is supported by the KIAS Individual Grant (MG094701) at Korea Institute for Advanced Study. The first and fourth authors were supported by the Agence Nationale de la Recherche through the project IZES (ANR-22-CE40-0011).

\bibliographystyle{abbrvnat}
\bibliography{probasProfinis}

\end{document}